\documentclass[12pt]{article}
\usepackage{fullpage}
\textheight = 680pt
\usepackage[english]{babel}
\usepackage{amsmath,amsthm,amssymb}
\usepackage{amsfonts}
\usepackage{graphicx}
\usepackage{epstopdf}
\usepackage{amsbsy}
\usepackage{algorithm}
\usepackage{algpseudocode}
\usepackage{subfig}
\usepackage{dsfont}
\usepackage{color}
\usepackage{mathrsfs}
\usepackage{bm}
\usepackage{bbm}
\usepackage[flushleft]{threeparttable}
\usepackage{relsize}

\newcommand{\Z}{\mathbb{Z}}
\newcommand{\R}{\mathbb{R}}
\newcommand{\E}{\mathbb{E}}
\newcommand{\I}{\mathcal{I}}

\usepackage{hyperref}

\newtheorem{theorem}{Theorem}[section]
\newtheorem{corollary}{Corollary}
\newtheorem{lemma}{Lemma}
\newtheorem{claim}{Claim}
\newtheorem{proposition}{Proposition}
\newtheorem{definition}{Definition}

\newtheorem{assumption}{Assumption}
\numberwithin{corollary}{section}
\numberwithin{lemma}{section}
\numberwithin{proposition}{section}
\numberwithin{definition}{section}
\numberwithin{remark}{section}
\numberwithin{equation}{section}
\numberwithin{table}{section}
\numberwithin{example}{section}
\numberwithin{assumption}{section}

\DeclareMathOperator*{\argmax}{arg\,max}
\allowdisplaybreaks

\makeatletter
\let\save@mathaccent\mathaccent
\newcommand*\if@single[3]{%
  \setbox0\hbox{${\mathaccent"0362{#1}}^H$}%
  \setbox2\hbox{${\mathaccent"0362{\kern0pt#1}}^H$}%
  \ifdim\ht0=\ht2 #3\else #2\fi
  }
\newcommand*\rel@kern[1]{\kern#1\dimexpr\macc@kerna}
\newcommand*\widebar[1]{\@ifnextchar^{{\wide@bar{#1}{0}}}
{\wide@bar{#1}{1}}}
\newcommand*\wide@bar[2]{\if@single{#1}{\wide@bar@{#1}{#2}{1}}
{\wide@bar@{#1}{#2}{2}}}
\newcommand*\wide@bar@[3]{%
  \begingroup
  \def\mathaccent##1##2{%
    \let\mathaccent\save@mathaccent
    \if#32 \let\macc@nucleus\first@char \fi
    \setbox\z@\hbox{$\macc@style{\macc@nucleus}_{}$}%
    \setbox\tw@\hbox{$\macc@style{\macc@nucleus}{}_{}$}%
    \dimen@\wd\tw@
    \advance\dimen@-\wd\z@
    \divide\dimen@ 3
    \@tempdima\wd\tw@
    \advance\@tempdima-\scriptspace
    \divide\@tempdima 10
    \advance\dimen@-\@tempdima
    \ifdim\dimen@>\z@ \dimen@0pt\fi
    \rel@kern{0.6}\kern-\dimen@
    \if#31
      \overline{\rel@kern{-0.6}\kern\dimen@\macc@nucleus\rel@kern{0.4}
      \kern\dimen@}%
      \advance\dimen@0.4\dimexpr\macc@kerna
      \let\final@kern#2%
      \ifdim\dimen@<\z@ \let\final@kern1\fi
      \if\final@kern1 \kern-\dimen@\fi
    \else
      \overline{\rel@kern{-0.6}\kern\dimen@#1}%
    \fi
  }%
  \macc@depth\@ne
  \let\math@bgroup\@empty \let\math@egroup\macc@set@skewchar
  \mathsurround\z@ \frozen@everymath{\mathgroup\macc@group\relax}%
  \macc@set@skewchar\relax
  \let\mathaccentV\macc@nested@a
  \if#31
    \macc@nested@a\relax111{#1}%
  \else
    \def\gobble@till@marker##1\endmarker{}%
    \futurelet\first@char\gobble@till@marker#1\endmarker
    \ifcat\noexpand\first@char A\else
      \def\first@char{}%
    \fi
    \macc@nested@a\relax111{\first@char}%
  \fi
  \endgroup
}
\makeatother




\begin{document}

\title{Analyzing and Provably Improving Fixed Budget Ranking and Selection Algorithms}
\author{Di Wu \quad Enlu Zhou\\
H. Milton Stewart School of Industrial and Systems Engineering,\\
Georgia Institute of Technology}
\maketitle

\begin{abstract}
This paper studies the fixed budget formulation of the Ranking and Selection (R\&S) problem with independent normal samples, where the goal is to investigate different algorithms' convergence rate in terms of their resulting probability of false selection (PFS). First, we reveal that for the well-known Optimal Computing Budget Allocation (OCBA) algorithm and its two variants, a constant initial sample size (independent of the total budget) only amounts to a sub-exponential (or even polynomial) convergence rate. After that, a modification is proposed to achieve an exponential convergence rate, where the improvement is shown by a finite-sample bound on the PFS as well as numerical results. Finally, we focus on a more tractable two-design case and explicitly characterize the large deviations rate of PFS for some simplified algorithms. Our analysis not only develops insights into the algorithms' properties, but also highlights several useful techniques for analyzing the convergence rate of fixed budget R\&S algorithms.\\
\\
{\bf Keywords}: ranking and selection; fixed budget; convergence rate.
\end{abstract}

\section{Introduction}
Stochastic simulation has become one of the most effective approaches to modeling large, complex and stochastic systems arising in various fields such as transportation, finance, supply chain management, power and energy, etc. It has also been used in many applications to identify which system design is optimal under certain performance criterion (e.g., the expected cost). This leads to what is called simulation optimization or Optimization via Simulation (OvS). In particular, using simulations to identify the best design among a finite number of candidates, generally referred to as Ranking and Selection (R\&S), is of great practical interest to study.

The research on R\&S is largely concerned with two related yet different formulations. The fixed confidence formulation aims to attain a target confidence level of the selected design's quality using as little simulation effort as possible, whereas the fixed budget formulation typically requires maximizing the probability of correct selection (PCS) under a fixed budget of simulation runs. For fixed confidence, a considerable amount of research effort goes to the indifference zone (IZ) formulation, which dates back at least to \cite{bechhofer1954single}. An IZ procedure guarantees selecting the best design with (frequentist) probability higher than certain level (e.g., 95\%), provided that the difference between the top-two designs is sufficiently large. Numerous efficient IZ procedures have been proposed in the simulation literature, including but are not limited to the KN procedure in \cite{kim2001fully}, the KVP and UVP procedures in \cite{jeff2006fully}, and the BIZ procedure in \cite{frazier2014fully}. We refer the reader to \cite{branke2005new} and \cite{kim2007recent} for excellent reviews of the development on this topic. In addition, the Bayesian approaches (see, e.g., \cite{chick2012sequential}) and the probably approximately correct (PAC) selection (see, e.g., \cite{ma2017efficient}) have also been studied in this stream of works. 

In this paper, we study the fixed budget formulation under a frequentist setting. In simulation optimization, the Optimal Computing Budget Allocation (OCBA) algorithm in \cite{chen2000simulation} is one of the most widely applied and studied algorithms. Although OCBA is usually derived under a normality assumption and asymptotic approximations, it is well known for its robust empirical performance even when the sample distributions are non-normal. Moreover, its key allocation rule can be formally justified from the perspective of the large deviations theory (see, e.g., \cite{glynn2004large}). However, a major criticism is that a theoretical performance guarantee is still lacking to this date, mostly due to the difficulties in characterizing the PCS for sequential sampling algorithms. On the other hand, in the Multi-Armed Bandit literature, the same problem has been studied under the name of ``Best-Arm Identification'', where the Successive Rejects (SR) algorithm proposed in \cite{audibert2010best} currently stands as one of the best algorithms. Built on a framework of sequential elimination, SR not only achieves good performance but also allows a finite-sample bound to be derived. Furthermore, \cite{carpentier2016tight} showed that SR can match the optimal rate up to some constant in the Bernoulli setting. Nevertheless, SR's performance under general distributions has not yet been studied. Bayesian methods are also gaining momentum recently. For example, the simple Bayesian algorithms proposed in \cite{russo2016simple} were shown to achieve the optimal posterior convergence rate. However, there was no guarantee on the frequentist performance. The follow-up work, \cite{qin2017improving}, improved the Expected Improvement method and provided a frequentist bound, but the guarantee was for the fixed confidence setting.

Among the aforementioned algorithms, OCBA inarguably has the most variants and extensions. It has been extended to multi-objective optimization (\cite{lee2004optimal}), finding simplest good designs (\cite{jia2013efficient}), R\&S under input uncertainty (\cite{gao2017robust}), optimizing expected opportunity cost (\cite{gao2017new}) and many others. Meanwhile, there are attempts to approach the problem from different perspectives. For example, \cite{peng2018ranking} considered fixed budget R\&S under a Bayesian framework and formulated the problem as a Markov Decision Process, allowing a Bellman equation and an approximately optimal allocation to be derived. Also interestingly, \cite{ryzhov2016convergence} revealed that some variants of the Expected Improvement methods essentially have the same allocation as the OCBA methodology. Nonetheless, as was mentioned in \cite{lee2010review} as one of the open challenges, ``there are no theoretical proof to show how good the finite-time performance of OCBA is with respect to the real problem''.

The purpose of this paper is to better understand existing algorithms' behavior through rigorous analysis, and develop insights for improving their performance. In particular, our work highlights convergence analysis on the OCBA algorithm and some of its variants, where the convergence rate is measured in terms of the large deviations rate of the probability of false selection (PFS). A small portion of the results in this paper can be found in \cite{wu2018ocba}, which only improves two variants of OCBA in a simplified two-design setting. The current paper significantly extends \cite{wu2018ocba} by generalizing to the multiple-design case, and characterizing the LD rate for several other algorithms. Specifically, our contributions are summarized as follows.

\begin{enumerate}
\item
We show that for three OCBA-type algorithms including the original OCBA, a constant initial sample size only amounts to a sub-exponential (or even polynomial) convergence rate of PFS.
\item
By making the initial sample size increase linearly with the total budget, we improve the convergence rate to exponential, as is shown by a finite-sample bound on the PFS. The improvement is further validated via numerical experiments.
\item
As further exploration towards general convergence analysis, we exactly characterize the convergence rate of two simplified algorithms for a two-design case, where the results showcase some interesting insights as well as useful proof techniques.
\end{enumerate}

The rest of the paper is organized as follows. A brief review on the fixed budget R\&S problem is provided in Section \ref{sec2}. Section \ref{sec3} reveals the drawback of constant initial sample size for several OCBA-type algorithms, and proposes a modification to improve their convergence rate. Section \ref{sec4} conducts a preliminary study on convergence rate characterization by analyzing some simplified algorithms in a two-design case. Numerical results are presented in section \ref{sec5}, followed by conclusion and future work in section \ref{sec6}.

\section{Problem Formulation} \label{sec2}
Given a set of designs $\mathcal{I} = \{1,\ldots, K\}$, our goal is to select (without loss of generality) the one with the highest expected performance. Samples from simulating design $i$ are denoted by $X_{ir}$, where $r$ denotes the $r$th simulation run. Each design's expected performance is unknown, and is typically evaluated through multiple simulation runs and estimated by the sample mean
\begin{equation*}
\bar{X}_{i, N_i}  := \frac{1}{N_i} \sum_{r=1}^{N_i} X_{ir},
\end{equation*}
where $N_i$ is how many times design $i$ has been sampled/simulated. The subscript $N_i$ will be suppressed when there is no ambiguity. The true best and the observed best designs are denoted by
\begin{equation*}
b:= \argmax_{i \in \mathcal{I}} \mu_i, \qquad \hat{b}:= \argmax_{i \in \mathcal{I}} \bar{X}_i,
\end{equation*}
respectively. We make the following standard assumptions to avoid technicalities, where $\mathcal{N}$ stands for normal distribution and ``i.i.d.'' means ``independent and identically distributed''.

\begin{assumption} \label{assump1}
\quad
\begin{enumerate}
\item[(i)]
$K \ge 2$ and $\mu_i \neq \mu_j$ for any two different designs $i$ and $j$.
\item[(ii)]
For each design $i$, $\{X_{ir}\}$ are i.i.d. samples from $\mathcal{N}(\mu_i, \sigma_i^2)$, where $\sigma_i > 0$. The samples are also independent across different designs.
\end{enumerate}
\end{assumption}

Then, under a fixed budget $T$ of simulation runs, it is desired to maximize the probability of correct selection (PCS), which is defined as
\begin{equation*}
\text{PCS} := \mathbb{P}\left\{\hat{b} = b\right\} = \mathbb{P}\left\{\bigcap_{i \neq b} \left\{\bar{X}_b > \bar{X}_i \right\} \right\}.
\end{equation*}
We will also refer to $1 - \text{PCS}$ as the probability of false selection (PFS). The challenge of fixed budget R\&S problem lies in how to make the best use of a finite simulation budget to distinguish the best design from the rest. Numerous algorithms have been proposed to this end, and their performance is typically evaluated using two types of measures. 

The first type is asymptotic measures, which are often based on the large deviations (LD) theory. It has been shown in \cite{glynn2004large} that many algorithms have the following asymptotic property.
\begin{equation} \label{eq:LD}
- \lim_{T \rightarrow \infty} \frac{1}{T} \log \text{PFS}_{\mathcal{A}}(T, P) = R_{\mathcal{A}}(P),
\end{equation}
where $\mathcal{A}$ is an algorithm, $P$ is a problem instance, $\text{PFS}_{\mathcal{A}}(T, P)$ is the PFS of algorithm $\mathcal{A}$ applied to problem $P$ under budget $T$, and $R_{\mathcal{A}}(\cdot) \ge 0$ is called an LD rate function. For convenience, we say an algorithm $\mathcal{A}$ has an \emph{exponential convergence rate} if its PFS converges exponentially fast to 0, i.e., its LD rate $R_{\mathcal{A}}$ is positive. Asymptotically optimal algorithms have been derived by maximizing $R_{\mathcal{A}}$ (see, e.g., \cite{glynn2004large}), but it is an insufficient performance measure since it focuses primarily on the asymptotic performance. For example, all the terms in $\left\{e^{-T}, Te^{-T}, T^2e^{-T}, \ldots \right\}$ have the same LD rate according to (\ref{eq:LD}), yet they behave quite differently for small values of $T$. 

Measures of the second type emphasize more on the finite-sample performance. One approach is to approximate the PFS using tight bounds, but it could be remarkably difficult for algorithms that allocate the budget in a sequential style. Another approach is to plot out the PCS curve and visualize how fast it converges to 1 as $T$ increases. The main downside, however, is that such empirical results are problem-specific and may fail to represent the general performance of an algorithm. 

Bearing the pros and cons of these three approaches in mind, we will analyze and improve existing algorithms from an LD perspective, and substantiate the improvement using finite-samples bounds combined with numerical results. 

\section{Analyzing and Improving OCBA-type Algorithms} \label{sec3}
We begin by addressing the convergence rate of the well-known OCBA algorithm, which has not been rigorously studied to the best of our knowledge. Two of its variants, called OCBA-D and OCBA-R, are also studied for a comparison. Surprisingly, we discover that if all three algorithms follow the common practice of a constant initial sample size, then the PFS converges only sub-exponentially fast as opposed to the often implicitly conjectured exponential rate. A quick modification is then proposed to guarantee an exponential convergence rate.

\subsection{OCBA, OCBA-D, and OCBA-R} \label{sec3.1}
This section gives a brief introduction to OCBA and two of its variants, which we call OCBA-D and OCBA-R. To better describe the algorithms, we introduce the following notations. Let $S^2_{i,n}$ denote the standard sample variance estimator of $n$ i.i.d. samples from design $i$, and let $\ell$ denote the iteration number of the algorithms, where $\ell = 0$ corresponds to the initialization phase. The budget allocated to design $i$ at the end of the $\ell$th iteration is written as $N_i(\ell)$, and other quantities are defined accordingly. For example, we let $\bar{X}_i(\ell) := \bar{X}_{i, N_i(\ell)}$ and $S^2_i(\ell) := S^2_{i, N_i(\ell)}$. The OCBA algorithm is presented in Algorithm \ref{OCBA}.

\begin{algorithm}[t]
\caption{OCBA (Chen et al. (2000))}\label{OCBA}
\begin{algorithmic}[1]
\State {\bf Input}: $N_0 \ge 2, \Delta \ge 1, T \ge KN_0$.
\State {\bf Initialization:} Sample each design $N_0$ times and compute $\bar{X}_i(0)$ and $S_i^2(0)$. $N_i(0) \gets N_0$. $T'(0) \gets N_0  K + \Delta$. $\ell \gets 0$.
\While{$\sum_{i \in \mathcal{I}} N_i(\ell) < T$ and $T'(\ell) \le T$}
\State $\hat{b} \gets \argmax_{i \in \mathcal{I}} \bar{X}_i(\ell)$.
\State Compute $\hat{\alpha}_1(\ell), \ldots, \hat{\alpha}_K(\ell)$ using (\ref{eq3.1}).
\For{$i = 1, \ldots, K$}
\State Run $\max\{0, \lfloor \hat{\alpha}_i(\ell) T'(\ell) \rfloor - N_i(\ell)\}$ replications for design $i$.
\State $N_i(\ell+1) \gets \max\{N_i(\ell), \lfloor \hat{\alpha}_i(\ell) T' (\ell)\rfloor\}$. Compute $\bar{X}_i(\ell+1)$ and $S_i(\ell+1)$.
\EndFor
\State $\ell \gets \ell+1$.
\State $T'(\ell+1) \gets T'(\ell) + \Delta$.
\EndWhile
\State {\bf Output:} $\hat{b} = \argmax_{i \in \mathcal{I}} \bar{X}_i(\ell)$.
\end{algorithmic}
\end{algorithm}

OCBA has three input parameters: (i) $N_0 \ge 2$ is the size of samples for an initial estimation of each design's mean and variance; (ii) $\Delta \ge 1$ is the increment of available budget at each iteration; (iii) $T \ge KN_0$ is the total budget. An auxiliary variable, $T'$, is introduced to implement sequential allocation. The procedure begins with estimating each design's mean and variance using $N_0$ samples, where $T'$ is set to be $KN_0$. Then, at each iteration, the algorithm increases $T'$ by $\Delta$, and (re)computes the fractions $\hat{\alpha}_1(\ell), \ldots, \hat{\alpha}_K(\ell)$ according to the following equations.
\begin{equation} \label{eq3.1}
\hat{\beta}_i(\ell) = \begin{cases}
S_i^2(\ell) / [\bar{X}_{\hat{b}}(\ell) - \bar{X}_i(\ell)]^2 & \mbox{$\text{if }i \neq \hat{b}$}\\
S_{\hat{b}}(\ell)\sqrt{\sum_{i \neq \hat{b}} \hat{\beta}^2_i(\ell)/S_i^2(\ell)} & \mbox{o./w}
\end{cases},
\qquad
\hat{\alpha}_i(\ell) := \frac{\hat{\beta}_i(\ell)}{\sum_{i \in \mathcal{I}} \hat{\beta}_i(\ell)}.
\end{equation}
With the fractions computed, the algorithm tries to match its current $N_i$  with the target allocation $\lfloor \hat{\alpha}_i(\ell) T'\rfloor$ to the greatest possible extent: if $N_i$ is below the target, run additional simulations to match its target; otherwise, maintain the current $N_i$ since consumed budget cannot be refunded. All the mean and variance estimates are updated at the end of each iteration. The process continues iteratively until the total budget is depleted. Finally, the design with the highest sample mean is selected as the output.

Observe that two features of OCBA stand out from Algorithm \ref{OCBA}. The first one to notice is the allocation fractions specified by (\ref{eq3.1}), which is a plug-in estimate of
\begin{equation} \label{eq3.2}
\beta_i := \begin{cases}
\sigma_i^2 / (\mu_b - \mu_i)^2 & \mbox{$\text{if } i \neq b$}\\
\sigma_b \sqrt{\sum_{i\neq b} \beta_i^2/\sigma_i^2} &\mbox{o./w.}
\end{cases},
\qquad
\alpha_i := \frac{\beta_i}{\sum_{i \in \mathcal{I}} \beta_i}.
\end{equation}
The fractions in (\ref{eq3.2}) can be derived by asymptotically maximizing a lower bound of the PCS under a normality assumption (see, e.g., \cite{chen2000simulation}). Moreover, \cite{glynn2004large} showed that for algorithms using a deterministic allocation of $N_i = \lfloor \alpha_i T \rfloor$, such fractions approximately maximize the LD rate of PFS in the case of i.i.d. normal samples. The other feature is sequential allocation, which consists of incrementally allocating the budget, repeatedly updating the estimated fractions $\hat{\alpha}_i$, and asymptotically matching the true allocation fractions $\alpha_i$ as $T \rightarrow \infty$. Empirical evidence shows that sequential allocation may be the key to its good finite-sample performance, even though a quantitative analysis is not available due to its highly complex dynamics. In this paper, we attempt to better understand OCBA by studying its asymptotic behavior, and our results will also shed some light on its finite-sample performance.

\begin{algorithm}[htb]
\caption{OCBA-D}\label{OCBA-D}
\begin{algorithmic}[1]
\State {\bf Input}: $N_0 \ge 2, T \ge KN_0$.
\State {\bf Initialization:} Sample each design $N_0$ times and compute $\bar{X}_i(0)$ and $S_i^2(0)$. $N_i(0) \gets N_0$. $\ell \gets 0$.
\While{$\sum_{i \in \mathcal{I}} N_i(\ell) < T$}
\State Compute $\hat{\alpha}_1(\ell), \ldots, \hat{\alpha}_K(\ell)$ using (\ref{eq3.1}).
\State Run one replication for design $i^* = \argmax_{i \in \I} \left\{ \hat{\alpha}_i(\ell) / N_i(\ell) \right\}$.
\State $N_{i^*}(\ell+1) \gets N_{i^*}(\ell) + 1$. Compute $\bar{X}_{i^*}(\ell+1)$ and $S_{i^*}(\ell+1)$.
\State $\ell \gets \ell+1$.
\EndWhile
\State {\bf Output:} $\hat{b} = \argmax_{i \in \mathcal{I}} \bar{X}_i(\ell).$
\end{algorithmic}
\end{algorithm}

\begin{algorithm}[h]
\caption{OCBA-R}\label{OCBA-R}
\begin{algorithmic}[1]
\State {\bf Input}: $N_0 \ge 2, T \ge KN_0$.
\State {\bf Initialization:} Sample each design $N_0$ times and compute $\bar{X}_i(0)$ and $S_i^2(0)$. $N_i(0) \gets N_0$. $\ell \gets 0$.
\While{$\sum_{i \in \mathcal{I}} N_i(\ell) < T$}
\State Compute $\hat{\alpha}_1(\ell), \ldots, \hat{\alpha}_K(\ell)$ using (\ref{eq3.1}).
\State Draw an independent sample $U(\ell)$ from $\text{Uniform}(0,1)$.
\State Run one replication for design $i^* = \min\{k \mid  U(\ell) \le \sum_{i=1}^k \hat{\alpha}_i(\ell), 1 \le k \le K\}$.
\State $N_{i^*}(\ell+1) \gets N_{i^*}(\ell) + 1$. Compute $\bar{X}_{i^*}(\ell+1)$ and $S_{i^*}(\ell+1)$.
\State $\ell \gets \ell+1$.
\EndWhile
\State {\bf Output:} $\hat{b} = \argmax_{i \in \mathcal{I}} \bar{X}_i(\ell).$
\end{algorithmic}
\end{algorithm}

In addition to OCBA, we also consider variations on OCBA and propose two variants, OCBA-D and OCBA-R, which are presented in Algorithms \ref{OCBA-D} and \ref{OCBA-R}, respectively. The ``D'' and ``S'' stand for ``Deterministic'' and ``Randomized''. Both variants inherit the fractions in (\ref{eq3.1}) and are designed to be fully sequential, i.e., at each iteration only a single additional run is allocated to some design $i^*$. However, their difference lies in the way $i^*$ is chosen. For OCBA-D, $i^*$ corresponds to the design with the largest ratio $\hat{\alpha}_i(\ell) / N_i(\ell)$, where the ratio is roughly a measure of need for simulations: intuitively, an undersampled design is reflected by a larger ratio relative to the others'. In OCBA-R, $i^*$ is chosen randomly by using the fractions as a sampling distribution. In other words, conditional on the $\hat{\alpha}$ vector, the choice of $i^*$ is independent of everything else. In sum, all three algorithms are governed by the ``asymptotically optimal'' fractions given by (\ref{eq3.2}), except that they use different sequential allocation strategies to approximate such fractions.

We consider OCBA-D and OCBA-R for two reasons. First, fully sequential allocation and randomization are among the most natural forms of generalization to consider, examples including the most-starving version of OCBA (\cite{chen2011stochastic}) and the Top-two Sampling Algorithms (\cite{russo2016simple}). It is therefore important to know if any finding for OCBA also applies to these variants. Second, such variations can often make the algorithm behave more regularly and thus more amenable to analysis.

\subsection{Convergence Analysis} \label{sec3.2}
As a main contribution of this paper, we formally analyze the performance of OCBA, OCBA-D and OCBA-R. Firstly, we show that all three algorithms attain the ``asymptotically optimal'' allocation fractions given by (\ref{eq3.2}) as $T \rightarrow \infty$. Secondly, we reveal that despite the convergence of fractions, if the initial sample size $N_0$ is chosen as a constant independent of $T$, then these algorithms suffer from a sub-exponential convergence rate.

To put our work in perspective, \cite{glynn2004large} were among the first to study the asymptotics of fixed budget R\&S algorithms. They established that if an algorithm pre-specifies some fractions $\alpha_i>0$ and simply sets $N_i = \lfloor \alpha_i T \rfloor$, then the PFS converges exponentially fast under weak assumptions on the sample distributions' tails. In particular, if the samples are i.i.d. normal, then the fractions given by (\ref{eq3.2}) approximately maximize the LD rate of the PFS. Perhaps under the influence of such insights, there seems to be an implicit conjecture that algorithms which ``asymptotically'' attain the optimal allocation fractions, such as OCBA, should enjoy a similar LD rate to its static counterpart's, or at least guarantee exponential convergence. In what follows, we disprove this conjecture by using OCBA and the two proposed variants as counterexamples.

To set the basis for our major discovery, we link Algorithms \ref{OCBA}-\ref{OCBA-R} through the convergence of their actual allocation fractions $N_i(\ell) / \sum_{j} N_j(\ell)$. Observe that $T \rightarrow \infty$ if and only if $\ell \rightarrow \infty$, so we characterize such convergence in terms of $\ell$ for convenience. All the proofs omitted in the paper can be found in the electronic companion.
\begin{proposition} \label{prop3.1}
Let Assumption \ref{assump1} hold and denote ``almost surely'' by ``$a.s.$''. Then, for OCBA, OCBA-D and OCBA-R, the following holds.
\begin{enumerate}
\item[(i)]
$N_i(\ell) \rightarrow \infty$ a.s. as $\ell \rightarrow \infty$ for all $i \in \mathcal{I}$.
\item[(ii)]
$\hat{\alpha}_i(\ell) \rightarrow \alpha_i$ a.s. as $\ell \rightarrow \infty$ for all $i \in \mathcal{I}$.
\item[(iii)]
$N_i(\ell) / \sum_{j \in \mathcal{I}} N_j(\ell) \rightarrow \alpha_i$ a.s. as $\ell \rightarrow \infty$ for all $i \in \mathcal{I}$.
\end{enumerate}
\end{proposition}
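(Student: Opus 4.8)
The plan is to prove the three claims in the order stated, since (ii) and (iii) both rest on (i). The real work is in (i); once every design is sampled infinitely often, parts (ii) and (iii) follow from the strong law of large numbers (SLLN) together with continuity and a tracking/averaging argument. So I expect (i) to be the main obstacle.

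For (i) I would argue by contradiction. Suppose that with positive probability some design is sampled only finitely often, and partition $\mathcal{I}$ into the set $\mathcal{I}_\infty$ of designs sampled infinitely often --- nonempty because $\sum_{j} N_j(\ell) \to \infty$ --- and the nonempty set $\mathcal{I}_{\mathrm{fin}}$ of the rest. By the SLLN, for $j \in \mathcal{I}_\infty$ we have $\bar X_j(\ell) \to \mu_j$ and $S_j^2(\ell) \to \sigma_j^2 > 0$, while for $i \in \mathcal{I}_{\mathrm{fin}}$ the estimates $\bar X_i(\ell), S_i^2(\ell)$ are eventually frozen at finite random values, with $S_i^2 > 0$ almost surely since the normal samples are continuous (no ties, no degenerate empirical variance once $N_i \ge 2$). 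The key lemma is then that the estimated fractions stay bounded away from $0$: I would show each $\hat\beta_j(\ell)$ converges to a finite strictly positive limit, using that every numerator $S_j^2$ is bounded and bounded below, and every gap $[\bar X_{\hat b}(\ell) - \bar X_j(\ell)]^2$ converges to a strictly positive limit.

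The hard part sits exactly here: I must rule out that two sample means coincide in the limit, which would send some $\hat\beta_j$ to infinity and could drive $\hat\alpha_{i^\dagger}$ to $0$. I would handle this with an almost-sure ``no coincidences'' argument --- the observed best $\hat b$ eventually stabilizes because the relevant means (frozen constants and the convergent $\mu_j$'s) are pairwise distinct almost surely, so every gap is bounded below. This yields $\liminf_\ell \hat\alpha_{i^\dagger}(\ell) > 0$ for any $i^\dagger \in \mathcal{I}_{\mathrm{fin}}$, contradicting finite sampling under each mechanism: for OCBA directly via $N_{i^\dagger}(\ell) \ge \lfloor \hat\alpha_{i^\dagger}(\ell)\, T'(\ell) \rfloor \to \infty$; for OCBA-D because the ratio $\hat\alpha_{i^\dagger}(\ell)/N_{i^\dagger}(\ell)$ stays bounded below while $\hat\alpha_j(\ell)/N_j(\ell) \to 0$ for $j \in \mathcal{I}_\infty$, forcing a resample; and for OCBA-R by a conditional Borel--Cantelli argument.

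Part (ii) then follows quickly: with all designs sampled infinitely often the SLLN gives $\bar X_i(\ell) \to \mu_i$ and $S_i^2(\ell) \to \sigma_i^2$ for every $i$, and since the $\mu_i$ are distinct with $b$ the maximizer we have $\hat b = b$ eventually, a.s.; applying the continuous mapping theorem to (\ref{eq3.1}) --- continuous at the limit because each $(\mu_b - \mu_i)^2 > 0$ --- gives $\hat\alpha_i(\ell) \to \alpha_i$. For (iii) I would treat the three algorithms separately. For OCBA-R, the sampling indicators minus their conditional probabilities $\hat\alpha_i(\ell)$ form a bounded martingale difference sequence, so a martingale SLLN with $\hat\alpha_i(\ell) \to \alpha_i$ and $\sum_j N_j(\ell) = \ell + KN_0$ gives the claim. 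For OCBA, eventually $\hat\alpha_i(\ell) \in (\alpha_i - \epsilon, \alpha_i + \epsilon)$, so the cumulative-max update forces $N_i(\ell)/T'(\ell) \to \alpha_i$ with over-allocation asymptotically negligible and $\sum_j N_j(\ell) \asymp T'(\ell)$. For OCBA-D the most-starving rule balances the ratios $\hat\alpha_i/N_i$, and a tracking argument gives $N_i(\ell)/N_j(\ell) \to \alpha_i/\alpha_j$, hence the normalized allocation converges to $\alpha_i$.
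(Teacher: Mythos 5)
Your proposal is correct, and for OCBA and OCBA-D it is essentially the paper's own argument: proof of (i) by contradiction via a frozen set of designs whose estimates stabilize, a claim that all estimated fractions converge to strictly positive limits (you correctly flag the a.s.\ ``no coincidences'' refinement --- including that a frozen sample mean must differ from every limit $\mu_j$, a null-set condition the paper's full-measure event $\Omega_1$ only partially spells out), the sandwich $\lfloor(\alpha_i-\epsilon)T'(\ell)\rfloor \le N_i(\ell) \le \lfloor(\alpha_i+\epsilon)T'(\ell)\rfloor$ for OCBA's part (iii) --- your phrase ``over-allocation asymptotically negligible'' is made precise in the paper by a last-jump argument, bounding $N_i(\ell)$ by the target at the most recent iteration at which $N_i$ increased, which is why (i) is needed to guarantee every $N_i$ jumps after the fractions stabilize --- and the pairwise ratio-balancing of $\hat\alpha_i/N_i$ for OCBA-D. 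Where you genuinely diverge is OCBA-R. The paper avoids martingale tools entirely: it fixes in advance an a.s.\ event $\Omega_2$ on which the i.i.d.\ uniforms $U(\ell)$ attain the correct limiting frequency in every interval with rational endpoints, and then sandwiches the random sampling interval $I_i(\ell)$ between deterministic rational intervals $I'_i \subseteq I_i(\ell) \subseteq I''_i$ of nearly the right length, so that both (i) and (iii) follow pathwise from the ordinary SLLN. You instead invoke the extended (conditional) Borel--Cantelli lemma for (i) and a bounded-difference martingale SLLN plus a Ces\`aro argument for (iii); both routes are valid. The paper's construction is more elementary but leans on the implementation detail that OCBA-R thresholds a single $\text{Uniform}(0,1)$ draw against cumulative fractions, whereas your argument uses only that the conditional probability of sampling design $i$ given the past equals $\hat\alpha_i(\ell)$, so it transfers verbatim to any randomized allocation rule (e.g., top-two-style sampling) at the cost of martingale machinery.
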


Proposition \ref{prop3.1} is not surprising since all three algorithms are designed to approximate and match the true fractions $\alpha_i$ in (\ref{eq3.2}). It holds regardless of the value of $N_0$ (as long as $N_0 \ge 2$), because the algorithms are capable of correcting the estimation error from the initialization phase. For this reason, a small $N_0$ is often employed to leave room for better allocation flexibility in succeeding iterations. For example, a common suggestion for $N_0$ is between 5 and 20 (see, e.g., \cite{law1991simulation,bechhofer1995design}). Nevertheless, the following theorem suggests that a constant $N_0$ independent of $T$ can cause the PFS to converge rather slowly.

\begin{theorem} \label{thm3.1}
Let Assumption \ref{assump1} hold. If $N_0$ is chosen as a constant independent of $T$, then for OCBA and OCBA-D,
\begin{equation} \label{eq:lb1}
\text{PFS}(T) \ge C T^{-(K-1)(N_0 - 1)}, \quad \forall T \ge KN_0,
\end{equation}
for some constant $C>0$ independent of $T$. Also, for OCBA-R,
\begin{equation} \label{eq:lb2}
-\lim_{T \rightarrow \infty} \frac{1}{T}\log \text{PFS}(T) = 0.
\end{equation}
\end{theorem}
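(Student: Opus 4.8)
The plan is to prove both bounds with a single construction: a ``bad initialization'' event $E_T$, measurable with respect to the first $N_0$ samples of each design, on which every non-best design is permanently frozen at a misleadingly high sample mean while the true best is sampled down to $\mu_b$. Concretely, I would fix constants $c,\delta>0$ and set
\[
E_T := \bigcap_{i \neq b}\Big\{\, S_{i,N_0}^2 \le c/T^2 \ \text{ and }\ \bar X_{i,N_0} \ge \mu_b + \delta \,\Big\}.
\]
The structural observation that drives the exponent is the scaling in \eqref{eq3.1}: when a non-best design $i$ is the observed best, its plug-in weight is $\hat\beta_i = S_i\sqrt{\sum_{j\neq i}\hat\beta_j^2/S_j^2}$, which is \emph{linear} in $S_i$, so keeping its fraction $\hat\alpha_i=O(1/T)$ forces $S_i=O(1/T)$, i.e. $S_i^2=O(1/T^2)$; when $i$ is not the observed best, $\hat\beta_i=S_i^2/[\,\cdot\,]^2$ and the weaker requirement $S_i^2=O(1/T)$ suffices. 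Imposing the uniform threshold $S_i^2\le c/T^2$ therefore guarantees $\hat\alpha_i(\ell)=O(1/T)$ at \emph{every} iteration, regardless of which frozen design is currently on top.

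For the probability, I would use that for i.i.d. normals the sample mean and sample variance are independent, and that designs are mutually independent, so $\mathbb{P}(E_T)$ factorizes over $i\neq b$. Each mean factor $\mathbb{P}(\bar X_{i,N_0}\ge \mu_b+\delta)$ is a fixed positive constant. Each variance factor is a chi-square left tail: since $(N_0-1)S_{i,N_0}^2/\sigma_i^2\sim\chi^2_{N_0-1}$ and $\mathbb{P}(\chi^2_{k}\le t)\sim t^{k/2}/[2^{k/2}\Gamma(k/2+1)]$ as $t\downarrow 0$, one obtains $\mathbb{P}(S_{i,N_0}^2\le c/T^2)\sim \text{const}\cdot T^{-(N_0-1)}$. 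Multiplying the $K-1$ factors yields $\mathbb{P}(E_T)\ge C\,T^{-(K-1)(N_0-1)}$, exactly the exponent in \eqref{eq:lb1}; note that the threshold $T^{-2}$ rather than $T^{-1}$ is precisely what doubles the naive exponent and produces the stated rate.

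It then remains to show that $E_T$ forces a false selection for OCBA and OCBA-D. On $E_T$ all non-best fractions are $O(1/T)$ throughout, so their targets satisfy $\lfloor\hat\alpha_i(\ell)T'(\ell)\rfloor\le N_0$ (respectively, the ratio $\hat\alpha_i/N_i$ is never the maximizer in OCBA-D, since $b$'s starvation ratio $\sim 1/N_b$ dominates $\sim 1/(TN_0)$ as $N_b\le T$), hence $N_i(\ell)=N_0$ for all $\ell$ and the entire remaining budget $T-(K-1)N_0\to\infty$ is poured into $b$. By the law of large numbers $\bar X_b\to\mu_b$, whereas every competitor stays at $\bar X_{i,N_0}\ge\mu_b+\delta$; therefore $\hat b=\argmax_i\bar X_i\neq b$ once $T$ is large, giving $\text{PFS}(T)\ge\mathbb{P}(E_T)\ge C\,T^{-(K-1)(N_0-1)}$. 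I expect the main obstacle to be making ``the fractions stay $O(1/T)$ at every iteration'' fully rigorous: the observed best may fluctuate among the frozen designs and the estimates of $b$ evolve randomly, so one must bound the observed-best branch of \eqref{eq3.1} uniformly in $\ell$. I would handle this by intersecting $E_T$ with a constant-probability event controlling $b$'s trajectory (e.g. $S_b^2(\ell)$ and the estimated gaps stay in fixed ranges), which leaves the polynomial rate unchanged and lets the explicit formula be bounded term by term.

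Finally, for OCBA-R and \eqref{eq:lb2} I would reuse $E_T$. The rate is always $\ge 0$, so it suffices to exhibit a sub-exponential lower bound. Conditioned on $E_T$, the total per-step probability of touching any non-best design is $\sum_{i\neq b}\hat\alpha_i(\ell)=O(1/T)$, so the chance that \emph{no} non-best design is ever resampled over the $\Theta(T)$ steps is $\prod_\ell\big(1-\sum_{i\neq b}\hat\alpha_i(\ell)\big)\ge e^{-C'}>0$, a constant (the argument is non-circular because ``no resampling'' keeps the variances, and hence the fractions, at their frozen $O(1/T)$ values). On this event the same reasoning yields a false selection, so $\text{PFS}(T)\ge C''\,T^{-(K-1)(N_0-1)}$ and $-\tfrac1T\log\text{PFS}(T)\le \tfrac1T\big[(K-1)(N_0-1)\log T+O(1)\big]\to 0$, which combined with the trivial lower bound proves \eqref{eq:lb2}.
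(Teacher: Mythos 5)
Your proposal is correct and rests on the same engine as the paper's proof --- an adversarial ``freezing'' event built from polynomially-likely tiny initial sample variances (the chi-square left tail $\mathbb{P}(S_{i,N_0}\le a)\sim (\text{const}\cdot a)^{N_0-1}$, which is the paper's Lemma \ref{lemma3}), constant-probability mean events, and an induction showing the frozen fractions stay $O(1/T)$ so no frozen design is ever resampled --- but your construction is the mirror image of the paper's, and the differences are real. The paper freezes the \emph{true best} design (and designs $3,\dots,K$) at \emph{low} means and funnels the budget into the second-best design $2$, so the observed best throughout is the \emph{sampled} design with $S_2(\ell)$ bounded below; this yields the gap-free bound $\hat{\alpha}_i(\ell)\le S_i(\ell)/S_2(\ell)$ directly from \eqref{eq3.1}, with no need to control separations among frozen designs. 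You instead freeze all non-best designs at \emph{high} means and funnel the budget into $b$, so the observed best is itself a frozen, tiny-variance design; the linear-in-$S_{\hat{b}}$ branch of \eqref{eq3.1} then applies to a design with $S_{\hat{b}}=O(1/T)$ (your correct reason for the uniform $S_i^2\le c/T^2$ threshold), but bounding $\hat{\alpha}_i$ now requires lower-bounding the denominator by $\hat{\beta}_b=S_b^2/\hat{\delta}_{\hat{b},b}^2$, which in turn forces the inter-frozen gaps $\hat{\delta}_{\hat{b},i}$ to be bounded away from zero --- the one technical point your sketch defers; it is easily repaired by requiring the frozen means to land in disjoint fixed intervals inside $E_T$ (still constant probability per design), at which point the observed best is a \emph{fixed} frozen design throughout and does not fluctuate. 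Two further notes: both constructions give the same exponent $(K-1)(N_0-1)$ because both impose $S_{i,N_0}=O(1/T)$ on exactly $K-1$ designs; and your OCBA-R argument, which keeps the $O(1/T)$ variance threshold and pays only $\prod_\ell(1-O(1/T))=\Theta(1)$ for never touching a frozen design, actually delivers a \emph{polynomial} PFS lower bound for OCBA-R --- strictly stronger than the paper's, which uses a constant (in $T$) variance threshold for OCBA-R and obtains only $\mathbb{P}(\mathcal{E})(1-\epsilon)^{T-KN_0}$ for each fixed $\epsilon$, then sends $\epsilon\downarrow 0$ to conclude \eqref{eq:lb2}. Your unified event thus handles all three algorithms at once and slightly sharpens the OCBA-R conclusion, at the cost of the extra gap-separation bookkeeping the paper's choice of scenario avoids.
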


Theorem \ref{thm3.1} appears somewhat surprising, as it states that a constant initial sample size leads to at most a polynomial convergence rate for OCBA and OCBA-D, and a sub-exponential convergence rate for OCBA-R. At a high level, it implies that the initial estimation error, though vanishing as $T\rightarrow \infty$, does not decrease at a sufficiently fast rate. It also implies that the convergence of allocation fractions alone does not say much about how fast the PFS converges. Before showing Theorem \ref{thm3.1}, we present a few technical lemmas and describe the main idea behind the proof.
\begin{lemma} \label{lemma1}
Let $S_n$ be the sample standard deviation of $n$ i.i.d. normal samples with variance $\sigma^2$. Then, for any $0<x<\sigma$,
\begin{equation} \label{eq:lemma1.1}
\mathbb{P}(S_n \le \sigma -x) \le \exp\left\{-\frac{(n-1)}{4} \left[1 - \left(\frac{\sigma-x}{\sigma} \right)^2 \right]^2 \right\},
\end{equation}

\begin{equation} \label{eq:lemma1.2}
\mathbb{P}(S_n \ge \sigma+x) \le \exp\left(- \frac{(n-1)x^2}{4\sigma^2} \right), \quad \forall x >0.
\end{equation}
\end{lemma}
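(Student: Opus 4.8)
The plan is to reduce both inequalities to Chernoff-type tail estimates for a chi-squared random variable. The starting point is the classical fact that, for $n$ i.i.d.\ normal samples with variance $\sigma^2$, the scaled sample variance $(n-1)S_n^2/\sigma^2$ is distributed as $\chi^2_{n-1}$. Writing $k := n-1$ and letting $Y \sim \chi^2_k$, I would recast $\{S_n \le \sigma - x\}$ (which is meaningful since $0<x<\sigma$ forces $\sigma-x>0$) as $\{Y \le kt\}$ with $t := \big(\tfrac{\sigma-x}{\sigma}\big)^2 \in (0,1)$, and $\{S_n \ge \sigma + x\}$ as $\{Y \ge ks\}$ with $s := \big(\tfrac{\sigma+x}{\sigma}\big)^2 > 1$. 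The only analytic input needed is the moment generating function $\mathbb{E}[e^{\lambda Y}] = (1-2\lambda)^{-k/2}$, valid for $\lambda < 1/2$.

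For \eqref{eq:lemma1.1}, I would bound $\mathbb{P}(Y \le kt)$ by the exponential Markov inequality applied to $-Y$: for every $\lambda>0$, $\mathbb{P}(Y \le kt) \le e^{\lambda k t}\,\mathbb{E}[e^{-\lambda Y}] = \exp\{\lambda k t - \tfrac{k}{2}\log(1+2\lambda)\}$. Minimizing the exponent over $\lambda>0$ gives the interior optimizer $\lambda^\star = \tfrac{1-t}{2t}>0$ and the optimized value $\tfrac{k}{2}\big[(1-t)+\log t\big]$. Setting $u := 1-t \in (0,1)$ and invoking the elementary inequality $u + \log(1-u) \le -\tfrac12 u^2$ — immediate from the series $\log(1-u) = -\sum_{j\ge1} u^j/j$ — the exponent is at most $-\tfrac{k}{4}(1-t)^2$, which is precisely the right-hand side of \eqref{eq:lemma1.1}.

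For \eqref{eq:lemma1.2}, I would bound $\mathbb{P}(Y \ge ks)$ by the exponential Markov inequality applied to $Y$: for $0<\lambda<\tfrac12$, $\mathbb{P}(Y \ge ks) \le e^{-\lambda k s}\,\mathbb{E}[e^{\lambda Y}] = \exp\{-\lambda k s - \tfrac{k}{2}\log(1-2\lambda)\}$. Minimizing over $\lambda\in(0,\tfrac12)$ yields $\lambda^\star = \tfrac{s-1}{2s}\in(0,\tfrac12)$ and the optimized exponent $\tfrac{k}{2}\big[\log s - (s-1)\big]$. Writing $v := x/\sigma$ so that $s = (1+v)^2$, this exponent equals $\tfrac{k}{2}\big[2\log(1+v) - 2v - v^2\big]$; since $\log(1+v) \le v$ for all $v>0$, it is bounded by $-\tfrac{k}{2}v^2 \le -\tfrac{k}{4}v^2 = -\tfrac{(n-1)x^2}{4\sigma^2}$. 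This establishes \eqref{eq:lemma1.2}, and because $\log(1+v)\le v$ holds for every $v>0$ the bound is valid for all $x>0$, as claimed.

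Both derivations are essentially routine once the chi-squared representation is in hand; the steps requiring a little care are checking that each Chernoff optimizer lands in its admissible range ($\lambda^\star>0$ because $t<1$, and $\lambda^\star\in(0,\tfrac12)$ because $s>1$) and verifying the two scalar inequalities that convert the optimized exponents into the stated closed forms. I do not anticipate any genuine obstacle beyond this bookkeeping; in particular, note that the upper-tail estimate is actually loose by a factor of two (the optimized exponent is $-kv^2/2$), so the constant $1/4$ in \eqref{eq:lemma1.2} holds comfortably.
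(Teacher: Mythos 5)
Your proposal is correct: every step checks out, including the chi-squared representation $(n-1)S_n^2/\sigma^2 \sim \chi^2_{n-1}$, the admissibility of both Chernoff optimizers ($\lambda^\star = \tfrac{1-t}{2t} > 0$ since $t<1$, and $\lambda^\star = \tfrac{s-1}{2s} \in (0,\tfrac12)$ since $s>1$), the convexity that makes each critical point a minimizer, and the two scalar inequalities $u + \log(1-u) \le -u^2/2$ and $\log(1+v) \le v$. The route, however, differs from the paper's in an instructive way: the paper does not optimize any exponent itself, but simply cites Lemma 1 of Laurent and Massart (2000) — the concentration bounds $\mathbb{P}(X - n \le -2\sqrt{nx}) \le e^{-x}$ and $\mathbb{P}(X - n \ge 2\sqrt{nx}+2x) \le e^{-x}$ for $X \sim \chi^2(n)$ — and obtains \eqref{eq:lemma1.1} and \eqref{eq:lemma1.2} by a change of variable (e.g., taking $x = (n-1)\left(1 - \left(\tfrac{\sigma-x}{\sigma}\right)^2\right)^2/4$ in the lower-tail bound). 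Your argument is essentially a self-contained re-derivation of what that citation supplies, since Laurent--Massart's lemma is itself proved by the same Cram\'er--Chernoff computation on the chi-squared moment generating function; what your version buys is independence from the reference, full transparency about where the constant $1/4$ comes from, and the observation that the upper tail \eqref{eq:lemma1.2} actually holds with the stronger constant $1/2$ in the exponent (the paper's route gives up the same factor when fitting $2\sqrt{kz}+2z$ under $k(2v+v^2)$). What the paper's approach buys is brevity — two lines plus a substitution — at the cost of importing a black-box lemma.
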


\begin{lemma} \label{lemma2}
Let $S^2_n$ be the sample variance of $n$ i.i.d. $\mathcal{N}(\mu, \sigma^2)$ random variables. Then, $\forall c>0, \exists \bar{\epsilon} \in (0, \sigma)$ such that
\begin{equation} \label{eq:lemma2.1}
\sum_{n \ge 2} \mathbb{P}\left\{S_n \le \sigma - \bar{\epsilon} \right\} \le c.
\end{equation}
\end{lemma}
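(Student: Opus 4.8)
The plan is to split the series into finitely many ``head'' terms and an exponentially decaying ``tail'', controlling the two parts by different means. The key tension is that the bound (\ref{eq:lemma1.1}) from Lemma \ref{lemma1}, which reads
\[\mathbb{P}(S_n \le \sigma - \bar\epsilon) \le e^{-(n-1) g(\bar\epsilon)}, \qquad g(\bar\epsilon) := \tfrac14\Big[1 - \big(\tfrac{\sigma-\bar\epsilon}{\sigma}\big)^2\Big]^2 > 0,\]
is essentially useless for the smallest index $n=2$: there the bound equals $e^{-g(\bar\epsilon)}$, which never drops below $e^{-1/4}$ no matter how close $\bar\epsilon$ is pushed toward $\sigma$. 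Hence a plain geometric summation of (\ref{eq:lemma1.1}) cannot be made smaller than a fixed constant, and the first few terms must be handled directly rather than through Lemma \ref{lemma1}.

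First I would record the monotonicity of $g$: writing $u = (\sigma-\bar\epsilon)/\sigma \in (0,1)$, the map $\bar\epsilon \mapsto g(\bar\epsilon)$ is strictly increasing on $(0,\sigma)$ and rises to $1/4$ as $\bar\epsilon \to \sigma^-$. Fixing any reference value $\bar\epsilon_0 \in (0,\sigma)$ and setting $a_0 := g(\bar\epsilon_0) > 0$, I then choose an integer $N \ge 2$ large enough that the geometric tail satisfies $\sum_{n>N} e^{-(n-1)a_0} = e^{-N a_0}/(1 - e^{-a_0}) \le c/2$; this fixes $N$ depending only on $c$ and $\bar\epsilon_0$.

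Next I would control the finite head $n = 2, \ldots, N$. The crucial fact is that for each such fixed $n$ the sample variance $S_n^2$ is a positive multiple of a $\chi^2_{n-1}$ variable, hence has a continuous distribution supported on $(0,\infty)$; consequently $\mathbb{P}(S_n \le \delta) \to 0$ as $\delta \to 0^+$, and moreover $\bar\epsilon \mapsto \mathbb{P}(S_n \le \sigma - \bar\epsilon)$ is nonincreasing. Since there are only finitely many indices in the head, I can pick $\bar\epsilon \in [\bar\epsilon_0, \sigma)$ close enough to $\sigma$ that $\mathbb{P}(S_n \le \sigma - \bar\epsilon) \le c/[2(N-1)]$ simultaneously for all $n = 2, \ldots, N$ (take the maximum of $\bar\epsilon_0$ and the finitely many individual thresholds, which stays strictly below $\sigma$). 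Summing the head then yields at most $c/2$.

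Finally I would assemble the two estimates with this single choice of $\bar\epsilon$. Because $\bar\epsilon \ge \bar\epsilon_0$ and $g$ is increasing, $g(\bar\epsilon) \ge a_0$, so the tail bound from the second step survives verbatim: $\sum_{n>N} \mathbb{P}(S_n \le \sigma - \bar\epsilon) \le \sum_{n>N} e^{-(n-1)g(\bar\epsilon)} \le \sum_{n>N} e^{-(n-1)a_0} \le c/2$. Adding head and tail gives $\sum_{n \ge 2}\mathbb{P}(S_n \le \sigma - \bar\epsilon) \le c$, as claimed. I expect the only genuine obstacle to be the interplay in the choice of $\bar\epsilon$: one wants $\bar\epsilon$ large to kill the head, yet must ensure this does not spoil the tail estimate. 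This is resolved precisely by fixing $N$ \emph{first} from the lower bound $a_0$, and then exploiting the monotonicity of $g$ so that enlarging $\bar\epsilon$ only improves the tail.
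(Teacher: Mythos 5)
Your proposal is correct and follows essentially the same route as the paper's proof: fix a reference $\bar\epsilon_0$ (the paper's $\epsilon$), use the exponential bound of Lemma \ref{lemma1} to choose a cutoff $N$ making the tail at most $c/2$, then push $\bar\epsilon$ toward $\sigma$ to kill the finitely many head terms (via $\mathbb{P}\{S_n \le \sigma - \epsilon\} \downarrow 0$ as $\epsilon \uparrow \sigma$), finally taking the larger of the two thresholds so that monotonicity preserves the tail estimate. Your explicit remark that the bound (\ref{eq:lemma1.1}) is useless at $n=2$ is a nice articulation of why the head/tail split is forced, but the argument itself coincides with the paper's.
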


\begin{lemma} \label{lemma3}
Let $S^2_n$ be the sample variance of $n$ i.i.d. $\mathcal{N}(\mu, \sigma^2)$ random variables. Then, $\forall a \in (0,b)$, where $b > 0$ is a constant, $\exists \mathcal{K}_b > 0$ such that
\begin{equation*}
\mathbb{P}\left\{S_n \le a \right\} \ge \left(\mathcal{K}_b a\right)^{n-1}.
\end{equation*}
\end{lemma}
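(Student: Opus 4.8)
The plan is to exploit the exact distribution of the sample variance: for $n$ i.i.d.\ $\mathcal{N}(\mu,\sigma^2)$ samples, $(n-1)S_n^2/\sigma^2$ follows a chi-squared law with $n-1$ degrees of freedom. Writing $k := n-1$ and $t := ka^2/\sigma^2$, the event $\{S_n \le a\}$ is exactly $\{\chi^2_k \le t\}$, so that
\begin{equation*}
\mathbb{P}\{S_n \le a\} = \int_0^t \frac{y^{k/2 - 1} e^{-y/2}}{2^{k/2}\Gamma(k/2)}\, dy.
\end{equation*}
Since only a lower bound is needed, I would bound $e^{-y/2} \ge e^{-t/2}$ on $[0,t]$ and integrate the remaining power of $y$ explicitly, which yields
\begin{equation*}
\mathbb{P}\{S_n \le a\} \ge \frac{e^{-t/2}}{2^{k/2}\Gamma(k/2)}\cdot\frac{t^{k/2}}{k/2} = \frac{2\, e^{-t/2} k^{k/2}}{k\, 2^{k/2}\Gamma(k/2)}\left(\frac{a}{\sigma}\right)^{k},
\end{equation*}
where in the last step I substituted $t^{k/2} = k^{k/2}(a/\sigma)^k$. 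The desired factor $a^{k} = a^{n-1}$ has already appeared, so the whole task reduces to showing that the remaining $n$-dependent prefactor is bounded below by $\mathcal{K}^{\,k}$ for a constant $\mathcal{K}>0$ uniform in $a \in (0,b)$.

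Three observations handle the prefactor. First, the exponential term is uniformly controlled using $a < b$: since $t = ka^2/\sigma^2$, we have $e^{-t/2} = \big(e^{-a^2/(2\sigma^2)}\big)^{k} \ge \big(e^{-b^2/(2\sigma^2)}\big)^{k}$, contributing a clean factor of the form $(\text{const})^k$ — and this is precisely where the hypothesis $a < b$ is used to make the constant independent of $a$ and $n$. Second, I would invoke Stirling's formula in the form $\Gamma(k/2) \le C_0\, (k/(2e))^{k/2}$ for all $k \ge 1$ (the polynomial factor in Stirling being bounded over $k\ge1$), which gives $k^{k/2}/[2^{k/2}\Gamma(k/2)] \ge e^{k/2}/C_0$, again of the form $(\text{const})^k$ up to a constant. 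Combining these, the bound becomes $\tfrac{2}{C_0 k}\,(\mathcal{K}_b' a)^{k}$ with $\mathcal{K}_b' = \sqrt{e}\,e^{-b^2/(2\sigma^2)}/\sigma$.

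The only remaining nuisance is the polynomial prefactor $2/(C_0 k)$, which decays in $k$. I would absorb it into the exponential base: since $1/k$ decays only polynomially while any $\rho^k$ with $\rho \in (0,1)$ decays exponentially, one can choose $\rho$ small enough (depending only on $C_0$) so that $2/(C_0 k) \ge \rho^k$ for every $k \ge 1$; this is a one-line calculus check on the function $k\lvert\log\rho\rvert - \log k$, whose minimum over $k\ge1$ is attained at $k=1$ once $\rho\le 1/e$. Setting $\mathcal{K}_b := \rho\,\mathcal{K}_b'$ then gives $\mathbb{P}\{S_n \le a\} \ge (\mathcal{K}_b a)^{k} = (\mathcal{K}_b a)^{n-1}$, with $\mathcal{K}_b$ depending only on $\sigma$ and $b$. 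The main obstacle here is not any single estimate but the bookkeeping that guarantees every $n$-dependent quantity is either folded into a $(\text{const})^{n-1}$ factor or is a sub-exponential prefactor that can be absorbed; the delicate point is the Stirling bound on $\Gamma(k/2)$, whose growth rate $e^{-k/2}(k/2)^{k/2}$ must cancel exactly against the $k^{k/2}$ coming from $t^{k/2}$, leaving behind only the harmless constant $e^{1/2}$.
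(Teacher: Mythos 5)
Your proof is correct, but it takes a genuinely different route from the paper's. The paper never touches the chi-squared density: writing $(n-1)S_n^2/\sigma^2 = \sum_{i=1}^{n-1} Z_i^2$ with $Z_i$ i.i.d.\ standard normal, it uses the event inclusion $\bigcap_{i=1}^{n-1}\{|Z_i| \le a/\sigma\} \subseteq \{S_n \le a\}$, factors by independence, and lower-bounds the single coordinate via $\mathbb{P}\{|Z_1| \le a/\sigma\} \ge \tfrac{2a}{\sigma\sqrt{2\pi}}e^{-a^2/(2\sigma^2)} \ge \mathcal{K}_b a$, the last step using $a<b$ --- three lines, no asymptotics. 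You instead integrate the $\chi^2_{k}$ density ($k=n-1$) directly, which forces you through Stirling's bound $\Gamma(k/2) \le C_0 (k/(2e))^{k/2}$ (valid uniformly for $k \ge 1$, since the polynomial and $e^{1/(12x)}$ corrections are bounded on $x \ge 1/2$) and the absorption of the prefactor $2/(C_0 k)$ into the base via $\rho^k \le 2/(C_0 k)$; all these steps check out, including the exact cancellation of $k^{k/2}$ against $2^{k/2}(k/(2e))^{k/2}$ and your observation that the minimum of $k\lvert\log\rho\rvert - \log k$ over $k \ge 1$ sits at $k=1$ once $\rho \le 1/e$. What the heavier machinery buys is sharpness at the intermediate stage: your bound $\tfrac{2}{C_0 k}\bigl(\sqrt{e}\,(a/\sigma)e^{-a^2/(2\sigma^2)}\bigr)^k$ captures the true large-deviations base of the left tail of $S_n$, whereas the paper's intersection bound concedes a constant factor in the base ($2/\sqrt{2\pi}$ in place of $\sqrt{e}$) from the outset; however, after you pay the factor $\rho$ to absorb $1/k$, the two final constants $\mathcal{K}_b$ are comparable. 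Since the lemma's only downstream use (the polynomial PFS lower bound in Theorem 3.1, where $a$ is of order $1/T$) cares solely about the exponent $n-1$, both arguments are equally serviceable, and the paper's elementary inclusion trick is the more economical choice --- worth remembering as a way to avoid Stirling bookkeeping entirely when only the polynomial order in $a$ matters.
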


Lemma \ref{lemma1} provides some basic tail bounds for the standard deviation estimator $S_n$, which can be used to prove Lemma \ref{lemma2}. Lemma \ref{lemma3} is the leading cause behind the polynomial convergence rate for OCBA and OCBA-D, as it points out that the left tail of $S_n$ converges to 0 only at a polynomial rate. We will present the proof of Theorem \ref{thm3.1} for OCBA, and the rest can be found in the electronic companion. To illustrate the main idea, consider an adversarial scenario for OCBA where
\begin{enumerate}
\item
After the initialization phase, $\hat{b}$ is some suboptimal design, e.g., design 2.
\item
The algorithm allocates all the remaining budget to design 2.
\item
The sample mean of design 2 beats all other designs' over all iterations. 
\end{enumerate}

In the scenario described above, we say that the algorithm ``freezes'' all the designs other than design 2, which only happens if the initial estimates for the ``frozen'' designs are highly inaccurate. For instance, we may consider a case where for all $i \neq 2$, $S_i(0)$ takes very small value and thus $\hat{\alpha}_i(\ell)$ is also tiny. This would trick the algorithm into greedily sampling design 2, while all the other designs' mean and variance estimates get no further update and thus stay inaccurate. To avoid technicalities, we further require design 2 to be the observed best design throughout the allocation process, so that $\hat{\alpha}_2(\ell)$ takes the same functional form for any iteration $\ell$ (recall from (\ref{eq3.1}) that $\hat{\alpha}_{\hat{b}}(\ell)$ has a different form than $\hat{\alpha}_i(\ell), i \neq \hat{b}$). The rest is to bound the probability of such an event from below, and show that it is not exponentially rare.

\proof{Proof of Theorem \ref{thm3.1} (OCBA).}
Assume without loss of generality that $\mu_1 > \mu_2 > \cdots > \mu_K$. For each design $i$, we will construct events $E_i(T)$ such that on $\mathcal{E}(T):=\bigcap_{i=1}^K E_i(T)$, a false selection always occurs. Without ambiguity, we will simply drop $T$ and write $\mathcal{E}$ and $E_i$ instead. To begin with, by Lemma \ref{lemma2} we can choose $\bar{\epsilon} \in (0, \sigma_2)$ such that $\sum_{n \ge N_0} \mathbb{P}\{S_{2,n} \le \sigma_2 - \bar{\epsilon}\} \le 1/4$. By a similar argument, there exists $\bar{\eta}>0$ such that $\sum_{n \ge N_0} \mathbb{P}\{ \bar{X}_{2,n} \le \mu_2 - \bar{\eta}\} \le 1/4$. Let
\begin{equation*}
E_2:= \left\{\bar{X}_{2,n} \ge \mu_2 - \bar{\eta}, \forall n \ge N_0 \right\} \cap \left\{S_{2,n} \ge \sigma_2 - \bar{\epsilon}, \forall n \ge N_0 \right\}.
\end{equation*}
Then, $\mathbb{P}(E_2) \ge 1- \frac{1}{4} -\frac{1}{4} = \frac{1}{2}$ by a union bound. For $i \neq 2$, we let
\begin{equation*}
E_i := \left\{\bar{X}_{i,N_0} \le \mu_2 - \bar{\eta} -1 \right\} \cap \left\{S_{i, N_0} \le N_0(\sigma_2 - \bar{\epsilon})/T \right\}.
\end{equation*}
We now show that $\bigcap_{i=1}^K E_i \subseteq \text{FS}$ by induction, where ``FS'' stands for the false selection event. Fix a sample path on $E_i$. Note that $\hat{b} = 2$ after the initialization phase. Assume that $\hat{b}=2$ at the end of the $(\ell-1)$th iteration, then at the $\ell$th iteration, for any $i \neq 2$,
\begin{align*}
\hat{\alpha}_{i}(\ell) = &\frac{S_i^2(\ell) / \hat{\delta}_{2,i}^2(\ell)}{\sum_{j \neq 2} \frac{S_j^2(\ell)}{\hat{\delta}_{2,j}^2(\ell)} + S_2(\ell) \sqrt{\sum_{j \neq 2} \frac{S_j^2(\ell)}{\hat{\delta}_{2,j}^4(\ell)}}}
\le \frac{S_i^2(\ell) / \hat{\delta}_{2,i}^2(\ell)}{S_2(\ell) \sqrt{\frac{S_i^2(\ell)}{\hat{\delta}_{2,i}^4(\ell)}}}\\
=&\frac{S_i(\ell)}{S_2(\ell)} \le \frac{N_0 (\sigma_2 - \bar{\epsilon})}{S_2(\ell) T},
\end{align*}
where $\hat{\delta}_{i,j} (\ell) := \bar{X}_i(\ell) - \bar{X}_j(\ell)$. From $E_2$ we have $S_2(\ell) \ge \sigma_2 - \bar{\epsilon}$, thus $\hat{\alpha}_i(\ell) T'(\ell) \le \hat{\alpha}_i(\ell)  T\le N_0$ for all $i \neq 2$ and only design 2 will get additional sample at step 7 of Algorithm \ref{OCBA}. Since $\bar{X}_2(\ell) \ge \mu - \bar{\eta} > \mu_2 - \bar{\eta} - 1 \ge \bar{X}_{i,N_0} = \bar{X}_i(\ell)$ for all $i \neq 2$, design 2 will still be $\hat{b}$ at the end of the $\ell$th iteration. Continue this process and a false selection is certain when the algorithm terminates. Finally, the probability of $\mathcal{E}$ can be bounded from below as follows.
\begin{align*}
\mathbb{P}(\mathcal{E}) &= \mathbb{P}\left(\bigcap_{i=1}^K E_i \right) = \prod_{i=1}^K \mathbb{P}(E_i) \ge \frac{1}{2} \prod_ {i \neq 2} \mathbb{P}(E_i)\\
&= \frac{1}{2} \prod_{i \neq 2}\Bigg [\underbrace{\mathbb{P}\left\{\bar{X}_{i,N_0} \le \mu_2 - \bar{\eta}-1 \right\}}_{\text{$(\dagger)$}} \underbrace{\mathbb{P} \left\{S_{i, N_0} \le \frac{N_0(\sigma_2 - \bar{\epsilon})}{T} \right\}}_{\text{$(\dagger\dagger)$}} \Bigg],
\end{align*}
where the last equality follows from the independence of $\bar{X}_{i, N_0}$ and $S_{i, N_0}$. Furthermore, $(\dagger) \ge p_i$ for some constant $p_i>0$ (independent of $T$), and $(\dagger\dagger) \ge [\mathcal{K}_i (\sigma_2 - \bar{\epsilon}) / T]^{N_0 -1}$ by Lemma \ref{lemma3}, where $\mathcal{K}_i > 0$ are constants independent of $T$. Gather all the terms and the conclusion follows.
 
\endproof

The key to proving Theorem \ref{thm3.1} is to exploit the asymmetry of the standard deviation estimator's distribution. Specifically, when constructing events $E_i(T)$, we require $S_{i,N_0}$ to decrease in order $1/T$ as $T \rightarrow \infty$ for all $i\neq 2$. Then, Lemma \ref{lemma3} can be used to show a polynomial lower bound for $\mathbb{P}(E_i)$. Another way to construct a ``freezing'' event is by increasing $S_2(\ell)$ in order $T$, but this merely produces an exponential lower bound according to (\ref{eq:lemma1.2}) in Lemma \ref{lemma1}. In other words, only exploiting the left tail of $S_{i,N_0}$ would produce a tighter lower bound for the PFS. 

Theorem \ref{thm3.1} can be counterintuitive at first glance. Recall from \cite{glynn2004large} that for normal samples, any fixed fractions $\alpha_i > 0$ would guarantee an exponential convergence rate. This particularly includes equal allocation, i.e., $N_i = \lfloor T/K \rfloor$ for all designs $i$. In this regard, Theorem \ref{thm3.1} seems to suggest that equal allocation is better than more sophisticated sequential allocation procedures, which contradicts numerous empirical studies in which OCBA exhibits significant advantage over equal allocation. To resolve the ``conflict'', note that the LD rate is only defined in an asymptotic sense, meaning that when $T$ gets large enough, equal allocation will eventually achieve a lower PFS than all three OCBA-type algorithms we consider. However, the crossing point of $T$ may be so large that the PFS is already very close to 0, which also explains why such a crossing point is not always observed in numerical results.

\subsection{A Modification for Improvement} \label{sec3.3}
We propose a simple modification to the three OCBA-type algorithms, which is to make $N_0$ grow linearly in $T$. This can be done by choosing a constant $\alpha_0 \in (0,1/K)$ and setting $N_0 = \lfloor \alpha_0 T \rfloor$. Intuitively, the PFS should converge at least as fast as equally allocating $\lfloor \alpha_0 T \rfloor$ to all designs, where an exponential convergence is guaranteed. More formally, we have the following finite-sample bound on the PFS.
\begin{theorem} \label{thm:improve}
Let Assumption \ref{assump1} hold and suppose that $\mu_1 > \mu_2 > \cdots > \mu_K$. If $N_0 = \lfloor \alpha_0 T\rfloor$ for some $\alpha_0\in (0,1)$, then for OCBA, OCBA-D and OCBA-R, there exists some positive constants $C_1, \ldots, C_K$  (independent of $T$) such that
\begin{equation} \label{eq:PFS-ub}
\text{PFS}(T) \le C_1 \exp\left(-\frac{\delta^2 \alpha_0 T}{8 \sigma_1^2 K} \right) + \sum_{i=2}^K C_i \exp\left( -\frac{\bar{\delta}_i^2 \alpha_0 T}{2\sigma_i^2 K} \right), \quad \forall T \ge KN_0,
\end{equation}
where $\delta := \mu_1 - \mu_2$ and $\bar{\delta}_i = \mu_2 - \mu_i + \frac{\delta}{2}$ for $i = 2, \ldots, K$.
\end{theorem}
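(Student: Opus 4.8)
The plan is to establish \eqref{eq:PFS-ub} by a single argument that applies verbatim to all three algorithms, exploiting only one structural fact: in OCBA, OCBA-D and OCBA-R the sample count of every design is nondecreasing over iterations and bounded below by the initialization size, so $N_i \ge N_0$ holds throughout for every $i$. The allocation rule itself never enters the bound. The intuition is that once each design is guaranteed $N_0 = \lfloor \alpha_0 T \rfloor$ samples, the initialization phase alone already behaves like an equal-allocation scheme with a budget growing linearly in $T$, which is enough to force exponential decay of the PFS.

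First I would reduce the PFS to one-sided deviation events through the midpoint threshold $m := \mu_1 - \delta/2 = \mu_2 + \delta/2$. A false selection can occur only if design $1$'s terminal sample mean falls below $m$ or some inferior design's terminal sample mean rises to $m$, i.e.
\begin{equation*}
\{\hat{b} \neq 1\} \subseteq \{\bar{X}_1 \le m\} \cup \bigcup_{i=2}^K \{\bar{X}_i \ge m\}.
\end{equation*}
A union bound then splits the PFS into a design-$1$ term, whose downward deviation from $\mu_1$ equals $\delta/2$, and $K-1$ terms for the inferior designs, whose upward deviation of $\bar{X}_i$ above $\mu_i$ equals exactly $m - \mu_i = \bar{\delta}_i$. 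This is the origin of the constants $\delta^2/8$ and $\bar{\delta}_i^2/2$ appearing in the exponents.

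The main obstacle is that each terminal sample size $N_i$ is random and data-dependent, so one cannot invoke a Gaussian tail bound at a fixed $n$ directly. I would circumvent this by decomposing over the realized value of $N_i$: since the events $\{N_i = n,\ \bar{X}_{i,n} \ge m\}$ are disjoint across $n \in \{N_0, \ldots, T\}$,
\begin{equation*}
\mathbb{P}(\bar{X}_{i, N_i} \ge m) = \sum_{n=N_0}^{T} \mathbb{P}(N_i = n,\ \bar{X}_{i,n} \ge m) \le \sum_{n=N_0}^{T} \mathbb{P}(\bar{X}_{i,n} \ge m),
\end{equation*}
and symmetrically for design $1$. Discarding the constraint $\{N_i = n\}$ from each intersection is legitimate and, crucially, sidesteps any optional-stopping subtlety, since we never evaluate a sample mean at a stopping time; the only cost is a sum of at most $T - N_0 + 1$ terms, each controlled by the standard bound $\mathbb{P}(\bar{X}_{i,n} - \mu_i \ge t) \le \exp(-n t^2/(2\sigma_i^2))$.

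Finally, each resulting series is geometric in $n$ and dominated by its smallest index $n = N_0$; summing it contributes only a constant factor $(1 - \exp(-\bar{\delta}_i^2/(2\sigma_i^2)))^{-1}$, which I would absorb into $C_i$, leaving an exponent proportional to $N_0$. Using $N_0 = \lfloor \alpha_0 T \rfloor \ge \alpha_0 T - 1$ and absorbing the resulting $O(1)$ factor into the constants yields exponents of order $\delta^2 \alpha_0 T/(8\sigma_1^2)$ and $\bar{\delta}_i^2 \alpha_0 T/(2\sigma_i^2)$. These are in fact sharper than what is claimed; the extra factor of $K$ in the denominator of each exponent in \eqref{eq:PFS-ub} merely records slack (as $N_0 \ge N_0/K$), and retaining it gives the clean, uniform statement that holds identically for all three algorithms. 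I expect the only delicate point to be the bookkeeping that keeps every $C_i$ independent of $T$, which follows because the geometric-sum factors and the correction from the floor function depend only on the fixed problem parameters.
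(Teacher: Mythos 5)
Your proposal is correct and follows essentially the same route as the paper: both reduce a false selection to threshold-crossing events at the common midpoint $\mu_2 + \delta/2$ (your $m$ coincides with the paper's thresholds $\mu_1 - \delta/2$ and $\mu_i + \bar{\delta}_i$), both neutralize the randomness of $N_i$ by a union bound over all possible sample sizes $n \in \{N_0, \ldots, T\}$ (your decomposition over $\{N_i = n\}$ yields literally the same sums as the paper's good event $\mathcal{E} = \bigcap_{r=N_0}^{T}\{\cdots\}$), and both finish with Gaussian tails and a geometric sum dominated by the $n = N_0 = \lfloor \alpha_0 T \rfloor$ term, with the factor $K$ in the stated exponents absorbed as slack into the constants. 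No gaps; your observation that the allocation rule never enters the bound is exactly the point the paper makes when it notes the bound applies to any algorithm with a linearly growing warm-up phase.
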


\proof{Proof of Theorem \ref{thm:improve}}
Note that since $N_0 \le N_i \le T$ for all designs $i$, if the event
\begin{equation*}
\mathcal{E} := \bigcap_{r=N_0}^{T} \left\{ \left\{\bar{X}_{1,r} \ge \mu_1 - \frac{\delta}{2} \right\} \bigcap \left\{ \bigcap_{i \neq 1} \left\{\bar{X}_{i,r} \le \mu_i + \bar{\delta}_i \right\} \right\}  \right\}
\end{equation*}
occurs, then we have a correct selection regardless of the exact values of $N_i$'s. Apply a Gaussian tail bound for $\bar{X}$ and we have
\begin{align*}
\text{PFS}(T) \le \mathbb{P} (\mathcal{E}^c) &\le \sum_{r = N_0}^{T} \left[\mathbb{P}\left(\bar{X}_{1,r} < \mu_1 - \frac{\delta}{2} \right) + \sum_{i=2}^K \mathbb{P} \left(\bar{X}_{i, r} > \mu_i + \bar{\delta}_i \right) \right]\\
&\le \sum_{r = N_0}^\infty \mathbb{P}\left(\bar{X}_{1,r} < \mu_1 - \frac{\delta}{2} \right) + \sum_{i=2}^K \sum_{r=N_0}^\infty \mathbb{P}\left(\bar{X}_{i, r} > \mu_i + \bar{\delta}_i \right)\\
&\le \sum_{r=N_0}^\infty \exp\left(-\frac{\delta^2 r}{8 \sigma_1^2} \right) + \sum_{i=2}^K \sum_{r=N_0}^\infty \exp\left(-\frac{\bar{\delta}_i^2 r}{2\sigma_i^2} \right).
\end{align*}
Evaluate the geometric sums and (\ref{eq:PFS-ub}) follows.
 
\endproof
The bound (\ref{eq:PFS-ub}) fills the long-standing void of a finite-sample PFS upper bound for OCBA-type algorithms. It also applies to a broad class of algorithms that involve a warm-up phase of acquiring initial estimates. An idea similar to using a linearly increasing $N_0$ is to enforce hard thresholds for the actual fractions such that, e.g., $N_i(\ell) / \sum_{j} N_j(\ell) > \epsilon_i$ for some $\epsilon_i > 0$. Both methods will force $N_i(\ell)$ to grow at least linearly fast in $T$, but we work with the former mainly for conveniently obtaining a PFS bound. The choice of $\alpha_0$ inevitably involves a tradeoff between lower initial estimation error and higher flexibility in subsequent allocation. In Section \ref{sec5}, we will use numerical results to demonstrate that an appropriately chosen $\alpha_0$ can lead to a significant improvement in the finite-sample PCS.

One drawback of the finite-sample bound in (\ref{eq:PFS-ub}) is that it is too general and thus can be quite loose. While a tighter upper bound should reflect the pros and cons of different sequential allocation strategies, deriving such a bound is known to be very challenging even for nicely structured fully sequential algorithms. In the upcoming section, we turn our attention to algorithms which follow simple designs yet capture some key features of advanced algorithms. The idea is to examine the individual impact of a feature  through LD rate analysis, and keep the intuition uncluttered from other common features in a sophisticated algorithm.

\section{Characterizing the LD Rate for Simplified Algorithms} \label{sec4}
In this section, we focus on algorithms with an exponential convergence rate, for which the LD rate is one of the most precise quantitative measures of asymptotic behavior. Nevertheless, LD rate analysis remains difficult for sophisticated sequential allocation algorithms. In this section, we \emph{exactly characterize} the LD rate for some simplified algorithms, and compare their LD rates with that achieved by the optimal static allocation derived in \cite{glynn2004large}. Each algorithm to be considered has a simple structure yet represents certain important feature of more advanced algorithms. Our analysis will focus on a two-design case for better tractability, but the proof techniques and insights can provide a basis for more general convergence analysis.

\subsection{Algorithms Overview} \label{sec4.1}
We consider a case of $K = 2$ and study three algorithms, which are presented in Algorithms \ref{DS}-\ref{two-phase}. Algorithm \ref{DS} is the deterministic algorithm studied in \cite{glynn2004large}, which statically allocates the budget according to pre-specified fractions $p$ and $1-p$, hence the name ``deterministic static (DS)''. 

A slight modification of DS leads to the randomized static (RS) algorithm in Algorithm \ref{RS}, which uses the static fractions as a sampling probability distribution at every iteration, and thus can be roughly regarded as a simplified version of OCBA-R or the Top-two Sampling Algorithms in \cite{russo2016simple}. To the best of our knowledge, the (frequentist) convergence rate of such a randomized algorithm has not been studied in the literature. 

Finally, Algorithm \ref{two-phase} is a two-phase algorithm which uses phase I to estimate the optimal DS fractions (see Section \ref{sec4.2} for more details), and then implements the estimated fractions in phase II. The two-phase algorithm is a vanilla version of our modified OCBA-type algorithms, as it enforces a linearly growing $N_0$, but does not update the fraction estimates in all subsequent iterations. Also, notice that we do not reuse the initial $2N_0$ samples in phase II, so $N_1$ and $N_2$ are not bounded from below by a linear function of $T$ and Theorem \ref{thm:improve} does not apply. However, we will show that it still has an exponential convergence rate due to the rapid decrease of initial estimation error as $T$ increases.

\begin{algorithm}[htb]
\caption{Deterministic static (DS)}\label{DS}
\begin{algorithmic}[1]
\State {\bf Input:} $p \in (0,1), T \ge 0$.
\State {\bf Allocation:} Run $N_1 = \lfloor pT \rfloor$ and $N_2 = \lfloor (1-p)T \rfloor$ replications for designs 1 and 2.
\State {\bf Output:} $\argmax_{i \in \{1,2\}} \bar{X}_{i, N_i}$.
\end{algorithmic}
\end{algorithm}

\begin{algorithm}[htb]
\caption{Randomized static (RS)}\label{RS}
\begin{algorithmic}[1]
\State {\bf Input:} $p \in (0,1), T \ge 0$.
\State {\bf Allocation:} At each iteration, independently simulate design 1 with probability $p$, and design 2 with probability $1-p$. 
\State {\bf Output:} $\argmax_{i \in \{1,2\}} \bar{X}_{i, N_i}$.
\end{algorithmic}
\end{algorithm}

\begin{algorithm}[htb]
\caption{Two-phase}\label{two-phase}
\begin{algorithmic}[1]
\State {\bf Input:} $\alpha_0 \in (0,1), T \ge 0$.
\State {\bf Phase I:} Run $N_0=\lfloor \alpha_0 T/2 \rfloor$ replications for each design and compute $S_{1, N_0}$ and $S_{2,N_0}$. Set $\hat{p} \gets S_{1,N_0} / (S_{1,N_0} + S_{2,N_0})$.
\State {\bf Phase II:} Run $N_1 = \lfloor (1-\alpha_0)\hat{p}T \rfloor$ additional replications from design 1, and $N_2 = \lfloor (1-\alpha_0)(1-\hat{p})T \rfloor$ additional replications from design 2.
\State {\bf Output:} $\argmax_{i \in \{1,2\}} \bar{X}_{i, N_i}$.
\end{algorithmic}
\end{algorithm}

\subsection{LD Rate Analysis} \label{sec4.2}
\subsubsection{Analysis of DS Algorithm} \label{sec4.2.1}
Before proceeding to the LD rate analysis of the RS and two-phase algorithms, we recall some well-established results for the DS algorithm. In addition, we also derive a few new results which will serve as benchmarks. Following the normality assumption and letting $\delta:=\mu_1 - \mu_2$, the PFS can be written as
\begin{equation*}
\text{PFS}_{\text{DS}}(T) =  \mathbb{P}\left(\bar{X}_{1,N_1} - \bar{X}_{2,N_2} < 0\right) =\int_{-\infty}^{-\frac{\delta}{\sqrt{\frac{\sigma_1^2}{N_1} + \frac{\sigma_2^2}{N_2} }}} \frac{1}{\sqrt{2 \pi}} e^{-\frac{t^2}{2}} dt,
\end{equation*}
where we assume that $\mu_1 > \mu_2$ and $\sigma_1, \sigma_2 > 0$ henceforth. Ignoring the integrality constraints on $N_1$ and $N_2$, it can be shown that setting $N_1 / (N_1+N_2) \approx p^* := \sigma_1 / (\sigma_1 + \sigma_2)$ minimizes the PFS. In simulation literature, this is often known as ``the optimal strategy for two-design problems is to allocate the budget proportionally to their standard deviations''. The same conclusion can be reached by maximizing the following LD rate with respect to $p \in (0,1)$, 
\begin{equation*}
-\lim_{T \rightarrow \infty} \frac{1}{T} \log\text{PFS}_{\text{DS}}(T) = \frac{\delta^2}{2\left(\frac{\sigma_1^2}{p} + \frac{\sigma_2^2}{1-p} \right)},
\end{equation*}
where $p^*$ is again the unique maximizer, and the corresponding optimal LD rate is given by
\begin{equation} \label{eq:rate-DS}
R_{\text{DS}}^*(\delta, \sigma_1, \sigma_2) := \frac{\delta^2}{2(\sigma_1+\sigma_2)^2}.
\end{equation}
We will use the optimal DS allocation as a benchmark in subsequent analysis. In practice, the true variances are unknown and thus $p^*$ cannot be implemented. A simple alternative is equal allocation (EA), i.e., setting $p = 1/2$. The LD rate for EA is given by
\begin{equation}  \label{eq:rate-EA}
R_{\text{EA}}(\delta, \sigma_1, \sigma_2) := \frac{\delta^2}{4(\sigma_1^2 + \sigma_2^2)}.
\end{equation}
Since $2(\sigma_1 + \sigma_2)^2 \le 4(\sigma_1^2 + \sigma_2^2) \le 4(\sigma_1+\sigma_2)^2$, we have $R_{\text{DS}}^* /2 \le R_{\text{EA}} \le R_{\text{DS}}^*$. In other words, EA's LD rate is never more than a factor of 2 away from the optimal DS rate. Another interesting fact is that, without prior knowledge on the designs' performance, EA is the most robust algorithm. Indeed, consider the robust optimization problem
\begin{equation*}
\inf_{p\in[0,1]} \sup_{\sigma_1, \sigma_2>0} \left\{ \frac{R^*(\delta, \sigma_1, \sigma_2)}{R_{\text{EA}}(\delta, \sigma_1, \sigma_2)} = \frac{\frac{\sigma_1^2}{p} + \frac{\sigma_2^2}{1-p}}{(\sigma_1 + \sigma_2)^2} \right\},
\end{equation*}
which is to find the $p$ that minimizes the worst case ratio between $R_{\text{DS}}^*$ and $R_{\text{EA}}$. It can be checked that the inner-layer problem's optimal value is $\min\{\frac{1}{p}, \frac{1}{1-p}\}$, so $p=1/2$ is the optimal solution. 

We now derive a PFS bound that holds for an important class of algorithms. Since the optimal DS allocation only involves the designs' variance information, it would be reasonable for us to restrict our discussion to algorithms with the following property.
\begin{definition}
A fully sequential algorithm is called \emph{variance-driven} if 
\begin{enumerate}
\item[(i)]
at iteration $\ell = 0$, it runs $N_0 \ge 2$ replications for each design to obtain initial variance estimates $S^2_1(0)$ and $S^2_2(0)$;
\item[(ii)]
at every iteration $\ell = \bar{\ell}$, the algorithm decides which design to simulate next solely based on $\left(S_1^2(\ell), S_2^2(\ell)\right)$ for all $\ell \le \bar{\ell}$, i.e., the history of variance estimates up to iteration $\bar{\ell}$;
\item[(iii)]
at the end of final iteration $\ell = L$, output $\hat{b} = \argmax_{i \in \{1,2\}} \bar{X}_{i, N_i(L)}$.
\end{enumerate}
\end{definition}

Note that in the case of $K=2$, OCBA's allocation fractions in (\ref{eq3.2}) degenerate to $\alpha_1/\alpha_2 =\sigma_1/\sigma_2$. Therefore, the three OCBA-type algorithms we considered in Section \ref{sec3}, i.e., OCBA, OCBA-D and OCBA-R, all fall into the category of variance-driven on two-design problems. We will derive a tight PFS upper bound which holds for all algorithms of this type. 

\begin{lemma} \label{lemma4}
Let $\bar{X}_n$ and $S_n^2$ be the sample mean and sample variance of $n$ i.i.d. normal random variables, respectively. Then, for all $n \ge 2$, $\bar{X}_n$ is independent of $(S_2^2, S_3^2, \ldots, S_n^2)$.
\end{lemma}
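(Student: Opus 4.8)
The plan is to reduce the claim to the classical fact that, for i.i.d.\ normal samples, the sample mean is independent of the vector of residuals, and then to observe that every partial sample variance is a deterministic function of that residual vector. Concretely, write $D := (X_1 - \bar{X}_n, \ldots, X_n - \bar{X}_n)$ for the residual vector formed using the full sample mean $\bar{X}_n$.

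First I would establish that $\bar{X}_n$ is independent of the whole block $D$. Since $(\bar{X}_n, D)$ is a linear image of the jointly Gaussian vector $(X_1, \ldots, X_n)$, it is itself jointly Gaussian, so it suffices to verify that $\bar{X}_n$ is uncorrelated with each coordinate of $D$. A one-line computation gives $\mathrm{Cov}(\bar{X}_n, X_i - \bar{X}_n) = \frac{\sigma^2}{n} - \frac{\sigma^2}{n} = 0$ for every $i$, and for jointly Gaussian variables zero covariance upgrades to independence of $\bar{X}_n$ from the entire vector $D$.

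The crux is then to show that each $S_k^2$, for $2 \le k \le n$, is a measurable function of $D$. The key point is that the sample variance is shift-invariant and $S_k^2$ depends only on $X_1, \ldots, X_k$; subtracting the common constant $\bar{X}_n$ from each of these arguments leaves $S_k^2$ unchanged, so $S_k^2 = S_k^2(X_1 - \bar{X}_n, \ldots, X_k - \bar{X}_n)$ is a function of the first $k$ coordinates of $D$. (Equivalently, one can use the identity $\bar{X}_k - \bar{X}_n = \frac{1}{k}\sum_{r=1}^k (X_r - \bar{X}_n)$ to rewrite each deviation $X_r - \bar{X}_k$ purely in terms of the residuals.) Hence $(S_2^2, \ldots, S_n^2) = g(D)$ for a fixed measurable map $g$, and since independence is preserved under measurable functions, $\bar{X}_n \perp D$ yields $\bar{X}_n \perp g(D) = (S_2^2, \ldots, S_n^2)$.

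I expect the only real obstacle to be the bookkeeping in the shift-invariance step, namely making sure the reduction is to residuals around the \emph{full} mean $\bar{X}_n$ rather than around each individual $\bar{X}_k$, so that a single vector $D$ simultaneously determines all of $S_2^2, \ldots, S_n^2$. An alternative route avoiding this bookkeeping is Basu's theorem: fixing $\sigma^2$ and viewing $\{\mathcal{N}(\mu, \sigma^2)\}_\mu$ as the model, $\bar{X}_n$ is a complete sufficient statistic for $\mu$, while $(S_2^2, \ldots, S_n^2)$ is ancillary for $\mu$ (its distribution depends only on $\sigma^2$, by shift-invariance), so the two are independent.
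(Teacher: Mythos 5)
Your proposal is correct, and its main argument is essentially the paper's: both reduce the claim to a joint-Gaussianity-plus-zero-covariance computation, differing only in which deviation vector carries the conditioning. The paper conditions on the family of deviations around each \emph{partial} mean, $(\bar{X}_k - X_1, \ldots, \bar{X}_k - X_k)$ for $2 \le k \le n$, so that each $S_k^2$ is manifestly a function of that family with no further work, at the cost of checking $\mathrm{Cov}(\bar{X}_n, \bar{X}_k - X_j) = 0$ over all pairs $j \le k \le n$. You instead condition on the single residual vector $D = (X_1 - \bar{X}_n, \ldots, X_n - \bar{X}_n)$, which needs only the one covariance identity $\mathrm{Cov}(\bar{X}_n, X_i - \bar{X}_n) = 0$, but then requires the shift-invariance bookkeeping (via $\bar{X}_k - \bar{X}_n = \frac{1}{k}\sum_{r=1}^k (X_r - \bar{X}_n)$) to express each $S_k^2$ as a measurable function of $D$; you handle that step correctly, and it is the same trade either way. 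Your Basu's theorem aside is the one genuinely different route: fixing $\sigma^2$, $\bar{X}_n$ is complete sufficient for $\mu$ while $(S_2^2, \ldots, S_n^2)$ is ancillary by location invariance, so independence follows for every $(\mu, \sigma^2)$ with no linear algebra at all. That argument is slicker and more conceptual, though it imports heavier statistical machinery than the paper's self-contained covariance check, and the direct Gaussian computation has the advantage of also delivering the cross-design extension used in the proof of Corollary \ref{cor1}.
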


Lemma \ref{lemma4} is an extension of the well-known result that $\bar{X}_n$ and $S^2_n$ are independent for normal distribution. Given a total budget of $T$, we let $N_1$ and $N_2$ denote the total number of simulation runs for designs 1 and 2 when the algorithm terminates, i.e., $N_1+N_2 = T$. Then, Lemma \ref{lemma4} has the following implication in our context.

\begin{corollary} \label{cor1}
For any variance-driven algorithm, it holds that $(\bar{X}_{1, N_1}, \bar{X}_{2,N_2}) \mid N_1 \sim (Z_1, Z_2)$ almost surely, where $Z_1\sim \mathcal{N}(\mu_1, \sigma_1^2/N_1)$, $Z_2\sim\mathcal{N}(\mu_2, \sigma_2^2/(T-N_1))$, and $Z_1$ is independent of $Z_2$.
\end{corollary}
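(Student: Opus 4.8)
The plan is to condition on the terminal count $N_1$ taking a fixed admissible value $n_1$ (so that $N_2 = T - n_1 =: n_2$), and to argue that on this event the two sample means revert to their ordinary fixed-sample-size laws and remain independent. Since a fully sequential algorithm increments one coordinate per iteration starting from $(N_0, N_0)$, $N_1$ ranges over finitely many integers, so it suffices to treat each $n_1$ with $\mathbb{P}(N_1 = n_1) > 0$ separately; the ``$a.s.$'' qualifier then refers to the finite support of $N_1$. Writing $G := \{N_1 = n_1\}$, I would establish (a) that $G$ is measurable with respect to the variance histories alone, and (b) that $(\bar{X}_{1,n_1}, \bar{X}_{2,n_2})$ is independent of those histories and has independent normal coordinates; combining (a) and (b) yields the conditional law claimed.

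For step (a), I would exploit the variance-driven property. The algorithm's trajectory is a monotone lattice path from $(N_0, N_0)$ to $(n_1, n_2)$, and at every visited state $(j,k)$ with $j+k < T$ the decision to sample design 1 or design 2 depends solely on the variance estimates observed so far, i.e. on $\{S_{1,i}^2 : i \le j\}$ and $\{S_{2,i}^2 : i \le k\}$. Because every state on a path terminating at $(n_1, n_2)$ satisfies $j \le n_1$ and $k \le n_2$, the event $G$ is measurable with respect to $\sigma\bigl(S_{1,2}^2, \ldots, S_{1,n_1}^2\bigr) \vee \sigma\bigl(S_{2,2}^2, \ldots, S_{2,n_2}^2\bigr)$. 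This is the step where the precise definition of variance-driven is essential, and I expect verifying this measurability cleanly --- capturing that no mean information ever enters the allocation and that only bounded variance indices are ever consulted --- to be the main obstacle.

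For step (b), I would invoke Lemma \ref{lemma4} together with the cross-design independence in Assumption \ref{assump1}. Lemma \ref{lemma4} gives $\bar{X}_{1,n_1} \perp (S_{1,2}^2, \ldots, S_{1,n_1}^2)$ and $\bar{X}_{2,n_2} \perp (S_{2,2}^2, \ldots, S_{2,n_2}^2)$, while independence across designs makes the design-1 block independent of the design-2 block. Factorizing the joint law over these facts shows that $\bar{X}_{1,n_1}$, $\bar{X}_{2,n_2}$, and the pair of variance histories are \emph{mutually} independent. Since $G$ is a function of the variance histories, $(\bar{X}_{1,n_1}, \bar{X}_{2,n_2})$ is independent of $\mathbf{1}_G$ and the two means are independent of each other, so conditioning on $G$ leaves their joint law unchanged. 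This gives $\bar{X}_{1,n_1} \mid G \sim \mathcal{N}(\mu_1, \sigma_1^2/n_1)$ and $\bar{X}_{2,n_2} \mid G \sim \mathcal{N}(\mu_2, \sigma_2^2/n_2)$ independently; substituting $n_2 = T - n_1$ and letting $n_1$ range over the support of $N_1$ yields the corollary.
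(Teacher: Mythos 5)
Your proposal is correct and takes essentially the same route as the paper's proof: the paper likewise observes that $\{N_1 = k\}$ is an event measurable with respect to the variance histories $Y_{1,k}=(S_{1,N_0},\ldots,S_{1,k})$ and $Y_{2,T-k}=(S_{2,N_0},\ldots,S_{2,T-k})$, then uses Lemma \ref{lemma4} together with independence across designs to get $(\bar{X}_{1,k},\bar{X}_{2,T-k}) \perp (Y_{1,k},Y_{2,T-k})$, so that conditioning on $N_1=k$ leaves the independent fixed-sample normal laws unchanged. Your explicit four-way factorization in step (b) is just a more detailed rendering of the paper's ``following the proof of Lemma \ref{lemma4}'' remark, and your step (a) is exactly its assertion that $N_1=k$ iff $(Y_{1,k},Y_{2,T-k})$ lies in a measurable set $A_k(T)$.
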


Generally speaking, the final mean estimates $\bar{X}_{1,N_1}$ and $\bar{X}_{2,N_2}$ can have a highly non-trivial correlation if an algorithm sequentially allocates the computing budget based on some iteratively updated statistics. Surprisingly, Corollary \ref{cor1} reveals that for variance-driven algorithms, $\bar{X}_{1,N_1}$ and $\bar{X}_{2,N_2}$ are conditionally independent given $N_1 = n_1$ for some $n_1 \ge N_0$. Moreover, their joint distribution coincides with what we get from deterministically allocating $n_1$ and $T-n_1$ runs to designs 1 and 2, respectively. This is due to the nice property of normal distribution characterized in Lemma \ref{lemma4}, and it gives rise to a tight PFS lower bound for all variance-driven algorithms.

\begin{proposition} \label{prop2}
For any variance-driven algorithm $\mathcal{A}$, we have
\begin{equation*}
\text{PFS}_{\mathcal{A}}(T) \ge 1 - \Phi\left(\frac{\delta \sqrt{T}}{\sigma_1 + \sigma_2} \right), \quad \forall T \ge 2N_0,
\end{equation*}
where $\Phi$ is the cumulative distribution function (c.d.f.) of $\mathcal{N}(0,1)$ distribution.
\end{proposition}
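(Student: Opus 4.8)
The plan is to use Corollary \ref{cor1} to reduce the PFS of any variance-driven algorithm to a conditional computation, and then show that the conditional PFS is minimized (equivalently, the conditional PCS is maximized) exactly at the optimal deterministic-static split. First I would condition on the terminal allocation $N_1 = n_1$, where necessarily $N_0 \le n_1 \le T - N_0$. By Corollary \ref{cor1}, given $N_1 = n_1$ the terminal sample means satisfy $(\bar{X}_{1,N_1}, \bar{X}_{2,N_2}) \sim (Z_1, Z_2)$ with $Z_1 \sim \mathcal{N}(\mu_1, \sigma_1^2/n_1)$ independent of $Z_2 \sim \mathcal{N}(\mu_2, \sigma_2^2/(T-n_1))$. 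Hence the conditional PFS is
\begin{equation*}
\mathbb{P}\left(\bar{X}_{1,N_1} < \bar{X}_{2,N_2} \mid N_1 = n_1\right) = \mathbb{P}(Z_1 - Z_2 < 0) = 1 - \Phi\!\left(\frac{\delta}{\sqrt{\frac{\sigma_1^2}{n_1} + \frac{\sigma_2^2}{T-n_1}}}\right),
\end{equation*}
using that $Z_1 - Z_2 \sim \mathcal{N}\!\left(\delta, \frac{\sigma_1^2}{n_1} + \frac{\sigma_2^2}{T-n_1}\right)$ and $\delta > 0$.

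The second step is to lower-bound this conditional PFS uniformly over $n_1$. Since $\Phi$ is increasing and $\delta > 0$, minimizing the conditional PFS over $n_1$ is equivalent to minimizing the variance $\frac{\sigma_1^2}{n_1} + \frac{\sigma_2^2}{T-n_1}$ over the continuous relaxation $n_1 \in (0,T)$. This is the same minimization that appears in the DS analysis: writing $n_1 = pT$, the quantity becomes $\frac{1}{T}\left(\frac{\sigma_1^2}{p} + \frac{\sigma_2^2}{1-p}\right)$, whose minimizer is $p^* = \sigma_1/(\sigma_1+\sigma_2)$, giving minimum value $\frac{(\sigma_1+\sigma_2)^2}{T}$. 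Therefore, for every admissible $n_1$,
\begin{equation*}
\frac{\delta}{\sqrt{\frac{\sigma_1^2}{n_1} + \frac{\sigma_2^2}{T-n_1}}} \le \frac{\delta}{\sqrt{(\sigma_1+\sigma_2)^2/T}} = \frac{\delta\sqrt{T}}{\sigma_1+\sigma_2},
\end{equation*}
and monotonicity of $\Phi$ yields $\mathbb{P}(Z_1 < Z_2 \mid N_1 = n_1) \ge 1 - \Phi\!\left(\frac{\delta\sqrt{T}}{\sigma_1+\sigma_2}\right)$, a bound that no longer depends on $n_1$.

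The final step is to remove the conditioning. Because the lower bound is a constant free of $n_1$, integrating (or summing) over the distribution of $N_1$ preserves it: by the tower property,
\begin{equation*}
\text{PFS}_{\mathcal{A}}(T) = \mathbb{E}\!\left[\mathbb{P}(Z_1 < Z_2 \mid N_1)\right] \ge 1 - \Phi\!\left(\frac{\delta\sqrt{T}}{\sigma_1+\sigma_2}\right),
\end{equation*}
which is the claimed bound. I would note that ties ($\bar{X}_{1,N_1} = \bar{X}_{2,N_2}$) occur with probability zero under the normal assumption, so the strict/non-strict inequality in the definition of false selection is immaterial. The only genuinely substantive ingredient is Corollary \ref{cor1}, which licenses treating the terminal means as independent Gaussians with the correct conditional variances despite the data-dependent sequential allocation; once that is in hand, the rest is the elementary scalar minimization of $\frac{\sigma_1^2}{n_1} + \frac{\sigma_2^2}{T-n_1}$ and the monotonicity of $\Phi$. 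The main obstacle is therefore purely conceptual rather than computational: one must be careful that the admissible range $n_1 \in [N_0, T-N_0]$ is a subset of $(0,T)$ so that the unconstrained minimizer provides a valid lower bound even if $p^* T$ itself is not an attainable value of $N_1$ for the given algorithm — but this is exactly why we minimize over the relaxation rather than over the algorithm's realizable allocations.
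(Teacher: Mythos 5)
Your proposal is correct and follows essentially the same route as the paper: both use Corollary \ref{cor1} to write the conditional PFS given $N_1=n_1$ as $1-\Phi\bigl(\delta/\sqrt{\sigma_1^2/n_1+\sigma_2^2/(T-n_1)}\bigr)$, identify $p^*T$ as the minimizer of the variance term, and conclude the uniform bound. The only difference is minor but in your favor: the paper inserts a Jensen step (asserting convexity of the conditional PFS in $N_1$ and bounding via $\E N_1$) before passing to the minimum, whereas you bound the conditional probability pointwise by its infimum over the continuous relaxation $n_1\in(0,T)$ and then apply the tower property, which sidesteps the convexity claim entirely.
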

\proof{Proof of Proposition \ref{prop2}.}
For any fixed $N_0\le N_1\le T-N_0$, 
\begin{equation*}
\text{PFS} = \mathbb{P}\left(\bar{X}_{1,N_1} < \bar{X}_{2,T-N_1}\right) = 1 - \Phi\left(\frac{\delta}{\sqrt{\frac{\sigma_1^2}{N_1} + \frac{\sigma_2^2}{T-N_1}}} \right),
\end{equation*}
where the right-hand side (RHS) is convex in $N_1$ and is minimized when $N_1 = N_1^* := p^* T$. For an algorithm $\mathcal{A}$ described in the statement, it follows from Corollary \ref{cor1} that $(\bar{X}_{1,N_1}, \bar{X}_{2,N_2}) \mid N_1$ is distributed as two independent normal random variables. Thus, by Jensen's inequality,
\begin{equation*}
\text{PFS}_{\mathcal{A}}(T) = \E\left[\mathbb{P}\left(\bar{X}_{1, N_1} < \bar{X}_{2,N_2} \mid N_1\right) \right] \ge \mathbb{P} \left(\bar{X}_{1, \E N_1} < \bar{X}_{2, \E N_2}\right),
\end{equation*}
where the RHS is further bounded from below by the PFS corresponding to $N_1^*$, which yields exactly the lower bound in the statement.
\endproof

Proposition \ref{prop2} establishes optimality for the optimal DS algorithm in a very strong sense: no variance-driven algorithm can beat the optimal DS allocation under any finite $T$ (up to some rounding error). The same typically does not hold if $K \ge 3$, where it can be checked numerically that the optimal DS algorithm may perform poorly on some problem instances under small budgets. Nonetheless, from an asymptotic point of view, it remains an open question whether sequential algorithms can achieve a higher LD rate than the optimal DS algorithm when $K \ge 3$.

\subsubsection{Analysis of RS Algorithm} \label{sec4.2.2}
Recall from Algorithm \ref{RS} that at each iteration, the RS algorithm simulates design 1 with probability (w.p.) $p$ and design 2 w.p. $1-p$, where $p \in (0,1)$ and the samples are independent of the decisions. Let $\left\{U(\ell)\right\}$ be a sequence of i.i.d. $\text{Bernoulli}(p)$ random variables representing whether design 1 is sampled at each iteration $\ell$. To ensure that the sample means are well-defined, we set $N_1 = \sum_{\ell=1}^T U(\ell) +1, N_2 = T - N_1 + 1$ so that each design gets sampled at least once. Then, the PFS is given by
\begin{align} 
\text{PFS}_{\text{RS}}(T) &= \mathbb{P}(\bar{X}_{1,N_1} < \bar{X}_{2, N_2}) = \E[\mathbb{P}(\bar{X}_{1,N_1} < \bar{X}_{2, N_2} \mid N_1)] \notag\\
&= \sum_{k=0}^T \mathbb{P}\left(\bar{X}_{1, k+1} < \bar{X}_{2, (T-k)+1}\right) {n \choose k} p^k (1-p)^{T-k}, \label{eq:PFS-RS}
\end{align}
which does not allow a closed form. However, a quick observation is that the RHS of (\ref{eq:PFS-RS}) is bounded from below by the term corresponding to $k=0$, i.e.,
\begin{equation*}
\text{PFS}_{\text{RS}}(T) \ge \mathbb{P}\left(\bar{X}_{1, 1} < \bar{X}_{2, T+1}\right) (1-p)^T, \quad \forall T,
\end{equation*}
which gives the LD rate upper bound
\begin{equation*}
-\lim_{T \rightarrow \infty}\frac{1}{T}\log\text{PFS}_{\text{RS}}(T) \le -\log(1-p).
\end{equation*}
Thus, the RS algorithm's LD rate is bounded as $\delta \rightarrow \infty$, which is in sharp contrast with the LD rate of the DS algorithm, where the latter grows in order $\delta^2$ according to (\ref{eq:rate-DS}). Since the separation margin of $\mu_1$ and $\mu_2$ measures the difficulty of a correct selection, this means that the RS algorithms cannot take advantage of a larger $\delta$ due to the randomness introduced in allocation. It also echoes our observation in Section \ref{sec3} that algorithms with the same limiting allocation fractions may have drastically different LD rates. More precisely, we have the following exact characterization.

\begin{theorem} \label{thm:RS}
For the RS algorithm, we have
\begin{equation} \label{eq:rate-RS}
-\lim_{T \rightarrow \infty}\frac{1}{T}\log\text{PFS}_{\text{RS}}(T)= \inf_{\alpha \in [0,1]} \left\{\frac{\delta^2}{2\left(\frac{\sigma_1^2}{\alpha}+\frac{\sigma_2^2}{1-\alpha}\right)} + kl(\alpha, p)\right\},
\end{equation}
where $kl(\alpha, p):= \alpha \log\frac{\alpha}{p} + (1-\alpha) \log\frac{1-\alpha}{1-p}$ is the Kullback-Leibler (K-L) divergence between two Bernoulli distributions with parameters $\alpha$ and $p$, respectively.
\end{theorem}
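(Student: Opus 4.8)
The plan is to read (\ref{eq:PFS-RS}) as a sum of exponentially small terms and apply a Laplace-principle (discrete Varadhan) argument: the exponential decay rate of the whole sum equals the minimum over $k$ of the per-summand rates. Writing $\alpha = k/T$, the $k$th summand factors into an \emph{inner} deterministic PFS $\mathbb{P}(\bar{X}_{1,k+1} < \bar{X}_{2,T-k+1})$ and a binomial weight $\binom{T}{k}p^k(1-p)^{T-k}$. For the inner factor, the DS analysis of Section \ref{sec4.2.1} together with the Gaussian tail gives, at $N_1 = \alpha T$ and $N_2 = (1-\alpha)T$, the rate $\frac{\delta^2}{2(\sigma_1^2/\alpha + \sigma_2^2/(1-\alpha))}$; for the binomial factor, Cram\'er's theorem for Bernoulli sums yields the rate $kl(\alpha,p)$. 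Summing the two exponents recovers the integrand in (\ref{eq:rate-RS}), which I denote $G(\alpha)$, and minimizing over $\alpha$ should produce the claimed limit. I would establish matching upper and lower bounds on $-\tfrac1T\log\text{PFS}_{\text{RS}}(T)$ separately.

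For the upper bound on PFS (hence a lower bound on the rate), I would bound every summand uniformly. The Gaussian tail bound $1-\Phi(x)\le \tfrac12 e^{-x^2/2}$ gives $\mathbb{P}(\bar{X}_{1,k+1}<\bar{X}_{2,T-k+1}) \le \tfrac12\exp(-\frac{\delta^2}{2(\sigma_1^2/(k+1) + \sigma_2^2/(T-k+1))})$, and since $\tfrac{1}{k+1}\le\tfrac1k$ this exponent is at least $T\cdot\frac{\delta^2}{2(\sigma_1^2/\alpha + \sigma_2^2/(1-\alpha))}$ for $1\le k\le T-1$. For the binomial weight I would use the clean entropy bound $\binom{T}{k}p^k(1-p)^{T-k}\le e^{-T\,kl(k/T,p)}$. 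Multiplying the two, each summand is dominated by $\tfrac12 e^{-T\,G(k/T)}\le \tfrac12 e^{-T\inf_\alpha G(\alpha)}$, so the sum over $k$ costs only a factor $T+1$, which vanishes after $-\tfrac1T\log$. The boundary terms $k\in\{0,T\}$ need separate inspection, since there the inner probability is $\Theta(1)$ rather than exponentially small; they contribute rates $-\log(1-p)=G(0)$ and $-\log p = G(1)$, both $\ge \inf_\alpha G(\alpha)$, so they do not spoil the bound.

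For the lower bound on PFS (hence an upper bound on the rate), I would drop all but a single well-chosen summand. Fixing $\alpha\in(0,1)$ and taking $k_T=\lfloor\alpha T\rfloor$, a Mills-ratio lower bound $1-\Phi(x)\ge \frac{1}{\sqrt{2\pi}}\frac{x}{1+x^2}e^{-x^2/2}$ on the inner factor (with $x=\Theta(\sqrt{T})$, so the prefactor is only polynomial) and a Stirling lower bound on the binomial weight give $-\tfrac1T\log(\text{summand }k_T)\to G(\alpha)$. Since $\text{PFS}_{\text{RS}}(T)$ dominates this single term, $\limsup_T -\tfrac1T\log\text{PFS}_{\text{RS}}(T)\le G(\alpha)$ for every $\alpha\in(0,1)$; taking the infimum and using continuity of $G$ up to the boundary (so that $\inf_{(0,1)}G=\inf_{[0,1]}G$) yields $\limsup_T -\tfrac1T\log\text{PFS}_{\text{RS}}(T)\le \inf_{[0,1]}G$. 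Combined with the previous paragraph's $\liminf_T -\tfrac1T\log\text{PFS}_{\text{RS}}(T)\ge \inf_{[0,1]}G$, this shows the limit exists and equals $\inf_\alpha G(\alpha)$.

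The main obstacle is the uniformity required in the upper bound: I must control every summand by the same exponential rate simultaneously, including the degenerate boundary terms where Gaussian concentration fails, and I must verify that the polynomial prefactors from the Gaussian tail and from Stirling genuinely wash out under $\tfrac1T\log$. A secondary technical point is confirming that $G$ extends continuously to $\{0,1\}$ (using the convention $0\log0=0$), so that restricting the infimum to the open interval, as forced by the single-term lower bound, does not change its value.
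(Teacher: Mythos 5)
Your proposal is correct, and at the top level it travels the same route as the paper: both read the mixture representation (\ref{eq:PFS-RS}) as a sum of exponentially small terms, apply the two-sided Gaussian tail bound (\ref{eq:gaussian}) to the inner probability, and identify the rate of the sum with the rate of its slowest term, arriving at $\inf_\alpha G(\alpha)$ with $G(\alpha)=\frac{\delta^2}{2(\sigma_1^2/\alpha+\sigma_2^2/(1-\alpha))}+kl(\alpha,p)$. Where you genuinely diverge is in how the max-term (Laplace) step is executed. The paper packages the interchange of $\lim_T$ and $\sup_\alpha$ into Lemma \ref{lemma:uniform}, which requires verifying that $\frac{1}{T}\log g_T(\lfloor\alpha T\rfloor)$ converges \emph{uniformly} on $[0,1]$; the delicate piece there is the binomial coefficient, handled through the log-gamma expansion (\ref{eq:gamma}) with an explicit remainder bound, and the PFS lower-bound half is declared ``similar'' and omitted. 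You bypass the uniform-convergence machinery entirely with nonasymptotic per-summand estimates: the entropy bound $\binom{T}{k}p^k(1-p)^{T-k}\le e^{-T\,kl(k/T,p)}$ together with the monotonicity observation $\frac{1}{k+1}\le\frac{1}{k}$ dominates every interior summand by $e^{-T\inf G}$ at the cost of a harmless factor $T+1$, the endpoint terms $k\in\{0,T\}$ are checked by hand (correctly: their rates are $G(0)=-\log(1-p)$ and $G(1)=-\log p$, both at least the infimum), and the matching bound comes from retaining the single summand $k=\lfloor\alpha T\rfloor$ via Mills' ratio and a Stirling lower bound, followed by continuity of $G$ up to the boundary under the $0\log 0=0$ convention. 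Your version buys elementary self-containedness, explicit finite-$T$ prefactors, a fully written-out lower-bound half, and it sidesteps the one mildly fragile point in the paper's argument (uniform convergence on the \emph{closed} interval, where the endpoint behavior needs care); the paper's lemma, in exchange, is a reusable tool that it deploys again, with an additional localization patch, in the proof of Theorem \ref{thm:two-phase}. All the individual estimates you invoke are correct as stated, so I see no gap.
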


The optimization problem in (\ref{eq:rate-RS}) is in general non-convex and an analytical solution is not available. Nonetheless, it can be checked numerically that the $p$ value maximizing the LD rate of the RS algorithm is different from $p^*$, the optimal DS fraction. The proof of Theorem \ref{thm:RS} relies on the following lemma, where $\Z^+$ denotes the set of nonnegative integers.

\begin{lemma} \label{lemma:uniform}
Let $g_T: \Z^+ \mapsto (0, \infty)$ be a sequence of functions for $T \in \Z^+$. If there exists a function $g^*: (0,1) \mapsto \R$ such that $\frac{1}{T} \log g_T(\lfloor \alpha T \rfloor)$ converges uniformly to $g^*(\alpha)$ on $\alpha \in [0,1]$, then
\begin{equation} \label{eq:uniform}
\lim_{T\rightarrow \infty} \frac{1}{T}  \log \left[\sum_{k=0}^T g_T(k)\right] = \sup_{\alpha \in [0,1]} g^*(\alpha).
\end{equation}
\end{lemma}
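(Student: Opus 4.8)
The plan is to prove the claimed identity by bounding the $\liminf$ and $\limsup$ of $\frac{1}{T}\log\left[\sum_{k=0}^T g_T(k)\right]$ separately and showing that both coincide with $\sup_{\alpha\in[0,1]} g^*(\alpha)$. This is the standard Laplace-principle fact that a sum of polynomially many positive terms inherits the exponential growth rate of its largest term. Throughout I write $M := \sup_{\alpha\in[0,1]} g^*(\alpha)$.

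For the lower bound I would discard all but one term. Since every summand is positive, for any fixed $\alpha\in[0,1]$ we have $\sum_{k=0}^T g_T(k) \ge g_T(\lfloor \alpha T\rfloor)$, hence $\frac{1}{T}\log\sum_{k=0}^T g_T(k) \ge \frac{1}{T}\log g_T(\lfloor\alpha T\rfloor)$. The right-hand side converges to $g^*(\alpha)$ by hypothesis (here pointwise convergence already suffices), so $\liminf_{T\to\infty}\frac{1}{T}\log\sum_{k=0}^T g_T(k) \ge g^*(\alpha)$. As $\alpha$ is arbitrary, taking the supremum gives $\liminf_{T\to\infty}\frac{1}{T}\log\sum_{k=0}^T g_T(k)\ge M$.

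For the upper bound I would bound the whole sum by the number of terms times the largest term: $\sum_{k=0}^T g_T(k)\le (T+1)\max_{0\le k\le T} g_T(k)$. Here the uniform convergence becomes essential. Fix $\epsilon>0$; by uniformity there is a single $T_0$ so that for all $T\ge T_0$ and all $\alpha\in[0,1]$ we have $\frac{1}{T}\log g_T(\lfloor\alpha T\rfloor)\le g^*(\alpha)+\epsilon\le M+\epsilon$. Taking $\alpha=k/T$ for each integer $0\le k\le T$ (so that $\lfloor\alpha T\rfloor = k$) yields $g_T(k)\le \exp\{T(M+\epsilon)\}$ simultaneously for every $k$. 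Therefore $\frac{1}{T}\log\sum_{k=0}^T g_T(k)\le \frac{\log(T+1)}{T} + M + \epsilon$, and since $\frac{\log(T+1)}{T}\to 0$ we obtain $\limsup_{T\to\infty}\frac{1}{T}\log\sum_{k=0}^T g_T(k)\le M+\epsilon$. Letting $\epsilon\downarrow 0$ gives $\limsup \le M$, and combining with the lower bound shows the limit exists and equals $M$.

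The only delicate point, and the main obstacle, is that pointwise convergence is \emph{not} enough for the upper bound: I need a single threshold $T_0$ that controls all $T+1$ values $g_T(0),\ldots,g_T(T)$ at once, which is precisely what uniformity provides. A secondary technicality is the mild mismatch between the stated domain $(0,1)$ of $g^*$ and the supremum over the closed interval $[0,1]$; since the convergence is assumed on all of $[0,1]$, I would simply read $g^*$ as the uniform limit on the closed interval, which is then automatically defined at the endpoints where the supremum may be taken. Apart from that the argument is routine, the one substantive idea being that $\frac{1}{T}\log(T+1)\to 0$, so the polynomial term count is invisible on the exponential scale.
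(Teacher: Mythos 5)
Your proof is correct and follows essentially the same route as the paper's: the lower bound by retaining a single positive summand, and the upper bound by dominating the sum with $(T+1)$ times its largest term, with uniform convergence used to control all $T+1$ terms simultaneously. The only difference is cosmetic --- the paper delegates the limit--supremum interchange to a cited theorem (Theorem 5.3 of \cite{shapiro2014lectures}), whereas you unpack that interchange into an explicit $\epsilon$--$T_0$ argument, and you correctly flag the harmless domain mismatch between $(0,1)$ and $[0,1]$ that the paper's statement glosses over.
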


\proof{Proof of Lemma \ref{lemma:uniform}.}
First of all, notice that
\begin{equation*}
\lim_{T\rightarrow \infty} \frac{1}{T} \log \left[\sum_{k=0}^T g_T(k)\right] \ge \lim_{T\rightarrow \infty} \frac{1}{T}\log g_T(\lfloor \alpha T \rfloor) = g^*(\alpha), \quad \forall \alpha \in [0,1].
\end{equation*}
so taking the supremum on the RHS gives a lower bound. For the upper bound,
\begin{align*}
\lim_{T\rightarrow \infty} \frac{1}{T} \log\left[\sum_{k=0}^T g_T(k)\right] &\le \lim_{T\rightarrow \infty} \sup_{\alpha \in [0,1]} \frac{1}{T}\log \left[(T+1) g_T(\lfloor \alpha T \rfloor) \right]\\
& = \sup_{\alpha \in [0,1]} \lim_{T\rightarrow \infty} \log \frac{1}{T}\left[(T+1) g_T(\lfloor \alpha T \rfloor) \right]\\
& = \sup_{\alpha \in [0,1]} g^*(\alpha),
\end{align*}
where the interchange of limit and supremum is justified by the uniform convergence of $\frac{1}{T} \log g_T(\lfloor \alpha T \rfloor)$ (see, e.g., Theorem 5.3 in \cite{shapiro2014lectures}).  
\endproof

Lemma \ref{lemma:uniform} can be viewed as a generalization of $\lim_{t\rightarrow \infty}\frac{1}{t}\log(e^{-at} + e^{-bt}) = b$ for $a>b>0$, i.e., the LD rate of a sum is determined by the largest summand. In the context of Theorem \ref{thm:RS}, $g_T(\cdot)$ is a sequence of functions that take values in $(0,1)$, so (\ref{eq:uniform}) roughly corresponds to the $g_T(\lfloor \alpha T \rfloor)$ term that converges to 0 ``at the slowest rate''. With Lemma \ref{lemma:uniform}, Theorem \ref{thm:RS} can be shown by checking the uniform convergence of the  function sequence $\frac{1}{T} \log g_T(\lfloor \alpha T \rfloor)$.

\subsubsection{Analysis of Two-phase Algorithm} \label{sec4.2.3}
Recall from Algorithm \ref{two-phase} that the two-phase algorithm first uses $\alpha_0$ fraction of the budget to obtain initial estimates of $\sigma_1$ and $\sigma_2$, and then allocates the remaining budget according to the plug-in estimate of $p^*$ given by $\hat{p} := S_1 / (S_1 + S_2)$. Since the allocation in phase II only depends on the value of $\hat{p}$ from phase I, we first characterize $\hat{p}$'s distribution as follows.
\begin{lemma} \label{lemma:rate-two-phase}
The probability density function (p.d.f.) of $\hat{p}$ is given by
\begin{equation*}
f_{N_0}(p) = \frac{2\Gamma(N_0 - 1)}{\left[\Gamma\left(\frac{N_0 - 1}{2} \right)\right]^2} \left[p(1-p)\right]^{N_0 - 2} \left(\frac{\sigma_1 \sigma_2}{(1-p)^2 \sigma_1^2 + p^2 \sigma_2^2} \right)^{N_0 - 1}, \quad p\in[0,1],
\end{equation*}
where $\Gamma$ is the gamma function $\Gamma(t) = \int_0^\infty x^{t-1}e^{-x} dx$.
\end{lemma}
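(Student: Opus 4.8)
The plan is to collapse the bivariate randomness in $\hat{p}=S_{1,N_0}/(S_{1,N_0}+S_{2,N_0})$ onto a single F-distributed variable and then perform a one-dimensional change of variables. Write $m:=N_0-1$. By the standard sampling theory for i.i.d.\ normal data, $m S_{1,N_0}^2/\sigma_1^2$ and $m S_{2,N_0}^2/\sigma_2^2$ are independent $\chi^2_m$ variables, so their ratio $F:=\frac{S_{1,N_0}^2\,\sigma_2^2}{S_{2,N_0}^2\,\sigma_1^2}$ follows an F-distribution with $(m,m)$ degrees of freedom. Because the two degrees of freedom coincide, its density simplifies to $f_F(x)=\frac{\Gamma(m)}{\Gamma(m/2)^2}\,x^{m/2-1}(1+x)^{-m}$ for $x>0$.

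Next I would observe that $\hat{p}$ is a strictly increasing smooth function of $F$. Indeed, $\hat{p}=\frac{S_{1,N_0}}{S_{1,N_0}+S_{2,N_0}}=\frac{R}{1+R}$ with $R:=S_{1,N_0}/S_{2,N_0}=\frac{\sigma_1}{\sigma_2}\sqrt{F}$, and inverting this relation gives $F=\frac{\sigma_2^2}{\sigma_1^2}\bigl(\frac{p}{1-p}\bigr)^2$ at $\hat{p}=p$. This exhibits the map between $F$ and $\hat{p}$ as a diffeomorphism of $(0,\infty)$ onto $(0,1)$, so the density of $\hat{p}$ is given by the transformation formula $f_{\hat{p}}(p)=f_F\!\left(F(p)\right)\left|\frac{dF}{dp}\right|$.

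I would then compute the Jacobian $\frac{dF}{dp}=\frac{\sigma_2^2}{\sigma_1^2}\cdot\frac{2p}{(1-p)^3}$ and substitute $x=F(p)$ into $f_F$. The three factors $x^{m/2-1}$, $(1+x)^{-m}$, and the Jacobian each contribute powers of $p$, $1-p$, $\sigma_1$, and $\sigma_2$; collecting them shows the exponents of both $p$ and $1-p$ collapse to $m-1=N_0-2$, the $\sigma$-factors combine to $(\sigma_1\sigma_2)^{m}=(\sigma_1\sigma_2)^{N_0-1}$, and the only surviving denominator is $\bigl((1-p)^2\sigma_1^2+p^2\sigma_2^2\bigr)^{m}$. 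Together with the constant $\frac{2\Gamma(m)}{\Gamma(m/2)^2}$ this reproduces exactly the claimed form of $f_{N_0}$.

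The one step requiring care is the exponent bookkeeping in the final substitution, where three separate factors each carry powers of $p$, $1-p$, $\sigma_1$, and $\sigma_2$ and a misplaced exponent is easy to introduce; I expect this to be the only genuine obstacle, and the cleanest organization is to track the four exponents separately before combining. An alternative route would bypass the F-distribution by writing the joint density of $(S_{1,N_0},S_{2,N_0})$, changing to variables $(p,s)=\bigl(S_{1,N_0}/(S_{1,N_0}+S_{2,N_0}),\,S_{1,N_0}+S_{2,N_0}\bigr)$, and integrating out $s$ through a Gamma integral that manufactures the $\Gamma(N_0-1)$ normalizing constant; since this needs a two-dimensional Jacobian, the one-dimensional F-reduction above is the more economical choice.
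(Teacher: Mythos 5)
Your proof is correct, and it takes a genuinely different route from the paper. The paper never mentions the F-distribution: it starts from the identity $\frac{d}{dt}\mathbb{P}\bigl(\frac{X}{X+Y}\le t\bigr)=\frac{1}{(1-t)^2}\int_0^\infty y\,f_X\bigl(\frac{t}{1-t}y\bigr)f_Y(y)\,dy$ for independent nonnegative $X,Y$, derives the p.d.f.s of $S_{1,N_0}$ and $S_{2,N_0}$ from $(N_0-1)S_i^2/\sigma_i^2\sim\chi^2(N_0-1)$, and then evaluates the resulting one-dimensional Gamma integral explicitly, which is where the constant $\Gamma(N_0-1)/[\Gamma(\frac{N_0-1}{2})]^2$ emerges. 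You instead observe that $F=\frac{S_{1,N_0}^2\sigma_2^2}{S_{2,N_0}^2\sigma_1^2}\sim F(m,m)$ with $m=N_0-1$, exploit the monotone map $F=\frac{\sigma_2^2}{\sigma_1^2}\bigl(\frac{p}{1-p}\bigr)^2$, and transfer the known $F(m,m)$ density by a one-dimensional change of variables; the Gamma integral the paper computes by hand is thereby absorbed into the standard normalization $B(m/2,m/2)^{-1}=\Gamma(m)/\Gamma(m/2)^2$ of the F density. I verified your exponent bookkeeping: $x^{m/2-1}$ contributes $(\sigma_2/\sigma_1)^{m-2}[p/(1-p)]^{m-2}$, $(1+x)^{-m}$ contributes $\sigma_1^{2m}(1-p)^{2m}\bigl[(1-p)^2\sigma_1^2+p^2\sigma_2^2\bigr]^{-m}$, and the Jacobian $\frac{\sigma_2^2}{\sigma_1^2}\cdot\frac{2p}{(1-p)^3}$ supplies the factor $2$; the powers collapse to $[p(1-p)]^{m-1}(\sigma_1\sigma_2)^m$, matching the statement with $m-1=N_0-2$. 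Your route is shorter and offloads the integration to a textbook fact, at the cost of quoting the F density; the paper's route is self-contained and its ratio formula applies to arbitrary independent positive variables, not just scaled chi variables. Your sketched alternative (change to $(p,s)$ with $s=S_{1,N_0}+S_{2,N_0}$ and integrate out $s$) is in fact essentially the computation hiding behind the paper's formula, so all three arguments are consistent.
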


With Lemma \ref{lemma:rate-two-phase}, we can apply a similar technique involving ``the slowest term'' in Section \ref{sec4.2.3} to establish the LD rate of the two-phase algorithm. 
\begin{theorem} \label{thm:two-phase}
For the two-phase algorithm, we have
\begin{equation} \label{eq:rate-two-phase}
-\lim_{T \rightarrow \infty}\frac{1}{T}\log\text{PFS}(T)= \min_{p \in [0,1]} \left\{\frac{(1-\alpha_0)\delta^2}{2\left(\frac{\sigma_1^2}{p} + \frac{\sigma_2^2}{1-p}\right)} + \frac{\alpha_0}{2} \log \left(\frac{(1-p)^2 \sigma_1^2 + p^2 \sigma_2^2}{2p(1-p) \sigma_1\sigma_2} \right) \right\}.
\end{equation}
\end{theorem}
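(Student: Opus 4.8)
The plan is to condition on the phase-I statistic $\hat p$ and reduce the problem to a Laplace-type (``slowest term'') analysis analogous to Lemma \ref{lemma:uniform}, but with the sum replaced by an integral against the density $f_{N_0}$ of Lemma \ref{lemma:rate-two-phase}. The first step is to observe that because the phase-II samples are drawn fresh and are independent of everything in phase I, conditioning on $\hat p = p$ fixes the allocation $N_1 = \lfloor(1-\alpha_0)p\,T\rfloor$ and $N_2 = \lfloor(1-\alpha_0)(1-p)T\rfloor$, and the resulting mean estimates $\bar X_{1,N_1}, \bar X_{2,N_2}$ are independent normals with the nominal variances. Hence the conditional probability of false selection is exactly a Gaussian tail,
\[
\text{PFS}(T\mid \hat p = p) = 1 - \Phi\!\left(\frac{\delta}{\sqrt{\sigma_1^2/N_1 + \sigma_2^2/N_2}}\right),
\]
and the unconditional PFS equals $\int_0^1 \text{PFS}(T\mid\hat p = p)\,f_{N_0}(p)\,dp$.

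Next I would extract the exponential order of the integrand $h_T(p) := \text{PFS}(T\mid\hat p = p)\,f_{N_0}(p)$. For the conditional tail, the Mills-ratio estimate $\log[1-\Phi(x)] = -x^2/2 + O(\log x)$ together with $N_1 \approx (1-\alpha_0)pT$ and $N_2 \approx (1-\alpha_0)(1-p)T$ gives
\[
-\frac{1}{T}\log\text{PFS}(T\mid\hat p = p) \longrightarrow \frac{(1-\alpha_0)\delta^2}{2\left(\frac{\sigma_1^2}{p}+\frac{\sigma_2^2}{1-p}\right)}.
\]
For the density, since $N_0 = \lfloor\alpha_0 T/2\rfloor$, I would apply Stirling's formula to the ($p$-free) normalizing constant $2\Gamma(N_0-1)/[\Gamma((N_0-1)/2)]^2$, whose log-order is $\frac{\alpha_0}{2}\log 2$, and combine it with the $p$-dependent factors to obtain
\[
-\frac{1}{T}\log f_{N_0}(p)\longrightarrow \frac{\alpha_0}{2}\log\!\left(\frac{(1-p)^2\sigma_1^2 + p^2\sigma_2^2}{2p(1-p)\sigma_1\sigma_2}\right).
\]
Adding the two limits yields $-\frac{1}{T}\log h_T(p)\to g^*(p)$, where $g^*(p)$ is precisely the objective inside the minimum in (\ref{eq:rate-two-phase}).

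Finally I would invoke the integral analogue of the slowest-term principle: if $\frac{1}{T}\log h_T(p)$ converges uniformly to $-g^*(p)$, then $\frac{1}{T}\log\int_0^1 h_T(p)\,dp\to \sup_p(-g^*(p)) = -\min_p g^*(p)$, which is (\ref{eq:rate-two-phase}). The upper bound is immediate from $\int_0^1 h_T(p)\,dp \le \sup_p h_T(p)$; the lower bound follows by restricting the integral to a small neighborhood of the minimizer of $g^*$ and using continuity, exactly as in the proof of Lemma \ref{lemma:uniform}.

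The main obstacle will be the uniform convergence, and in particular the boundary behavior near $p=0$ and $p=1$. There the floor operations can drive $N_1$ or $N_2$ to zero, so the Gaussian-tail asymptotic for the conditional PFS is no longer valid, and the pointwise limit $g^*(p)$ blows up (because $p(1-p)\to 0$ in the density term). I would handle this by splitting $[0,1]$ into a compact interior interval $[\epsilon,1-\epsilon]$, on which $N_1,N_2\to\infty$ and the Stirling and Mills-ratio estimates hold uniformly once the floors are absorbed, and two boundary strips. On the strips the conditional PFS is bounded by $1$, while $f_{N_0}$ is exponentially small at a rate that can be made larger than $\min_p g^*(p)$ by taking $\epsilon$ small (since $g^*\to\infty$ at the endpoints); hence the boundary contribution is negligible and does not affect the limit. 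Verifying that the density rate dominates on the strips, and that the interior convergence is genuinely uniform after the floors are controlled, is the technical crux.
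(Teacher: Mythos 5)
Your proposal is correct and takes essentially the same route as the paper's proof: the same integral representation of the PFS conditional on $\hat p$, the same Gaussian tail bounds and Stirling estimate for the gamma normalizing constant (yielding the $\frac{\alpha_0}{2}\log 2$ term), and the same Laplace-type slowest-term argument, including the key observation that uniform convergence fails only through the $\log(p(1-p))$ term near $p=0,1$ and is repaired by localizing to a compact interior interval on which convergence is uniform. The paper implements the localization by showing $g_T(p)<g^*(\tilde p)-\epsilon$ on the boundary strips so the maximizer is trapped in some $[p_1,p_2]$, which is the same idea as your bound of the conditional PFS by one combined with the exponential decay of $f_{N_0}$ on the strips.
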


We argue that the RHS of (\ref{eq:rate-two-phase}) is not bounded in $\delta$. Let $p^*_{\delta}$ be the minimizer corresponding to parameter $\delta$, and let $\underline{p}^*:=\liminf_{\delta \rightarrow\infty} p^*_\delta, \bar{p}^*:=\limsup_{\delta \rightarrow\infty} p^*_\delta$. If $\underline{p}^* = 0$ or $\bar{p}^*=1$, then the second term in the objective function is unbounded in $\delta$; otherwise if $\underline{p}^* > 0$ and $\bar{p}^*< 1$, then the first term will be unbounded as $\delta \rightarrow \infty$. Either case, the two-phase algorithm does not suffer from an LD rate bottleneck as the RS algorithm does. However, it can be checked numerically that the two-phase algorithm is usually far from matching the LD rate of the optimal DS algorithm. This should be no surprise since $\hat{p}$ is subject to estimation error.

\section{Numerical Results} \label{sec5}
We test the performance of four algorithms: OCBA, OCBA+, OCBA-D+, OCBA-R+, where OCBA is the original OCBA with a constant initial sample size $N_0$, and the ``+'' algorithms are modified versions that implement $N_0 = \lfloor \alpha_0 T \rfloor$ for some chosen $\alpha_0 \in (0,1)$. The purpose is to see whether making $N_0$ grow linearly with $T$ can boost the PCS. We apply these algorithms to six problem instances, which are listed as follows. In particular, the ``Slippage Configuration'' refers to the least favorable setting where all the suboptimal designs have the same mean.

\begin{enumerate}
\item[(1)]
\emph{Ten designs A}:
$\mu = [1, 1.1, 1.2, \ldots, 1.8, 5], \sigma = [5, 5, \ldots, 5, 20]$.

\item[(2)]
\emph{Ten designs B}:
$\mu = [1, 1.1, 1.2, \ldots, 1.8, 5], \sigma = [20, 20, \ldots, 20, 5]$.

\item[(3)]
\emph{Slippage Configuration A}:
$\mu = [1, 1, 1, 1, 2], \sigma = [2,2,2,2,10]$.

\item[(4)]
\emph{Slippage Configuration B}:
$\mu = [1, 1, 1, 1, 2], \sigma = [10,10,10,10,2]$.

\item[(5)]
\emph{Equal variances}:
$\mu = [1, 2, \ldots, 10], \sigma_i = 10, \forall i = 1,2 \ldots, 10$.

\item[(6)]
\emph{Increasing variances}:
$\mu = [1,2,\ldots, 10], \sigma = [6, 7, 8, \ldots, 15]$.
\end{enumerate}

The algorithm parameters are $N_0 = 10, \Delta = 20$ for OCBA, and $\alpha_0 = 0.2$ for all the modified algorithms. We would like to see how fast the PCS converges to 1 as $T$ increases from 200 to 4000 (with an increment of 200). To estimate the PCS, all the algorithms are run for 10,000 independent replications using common random numbers, i.e., the algorithms share the same $X_{ir}$ samples for each design. The PCS curves are gathered in Figure \ref{fig1}, and a number of observations follow.

\begin{figure}
    \centering
    \subfloat[Ten designs A.]{{\includegraphics[width=0.45\linewidth]{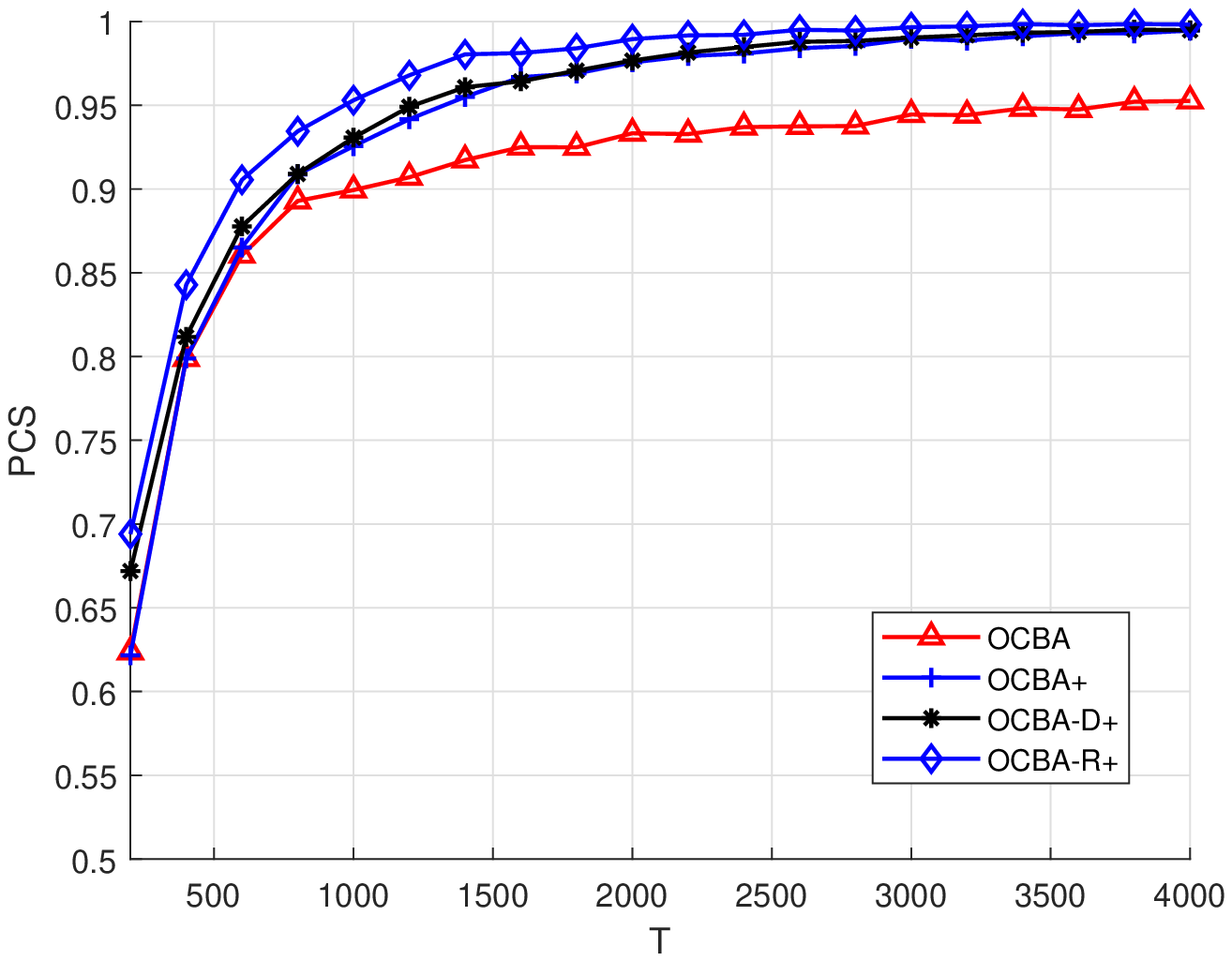} }}
    \quad
    \subfloat[Ten designs B.]{{\includegraphics[width=0.45\linewidth]{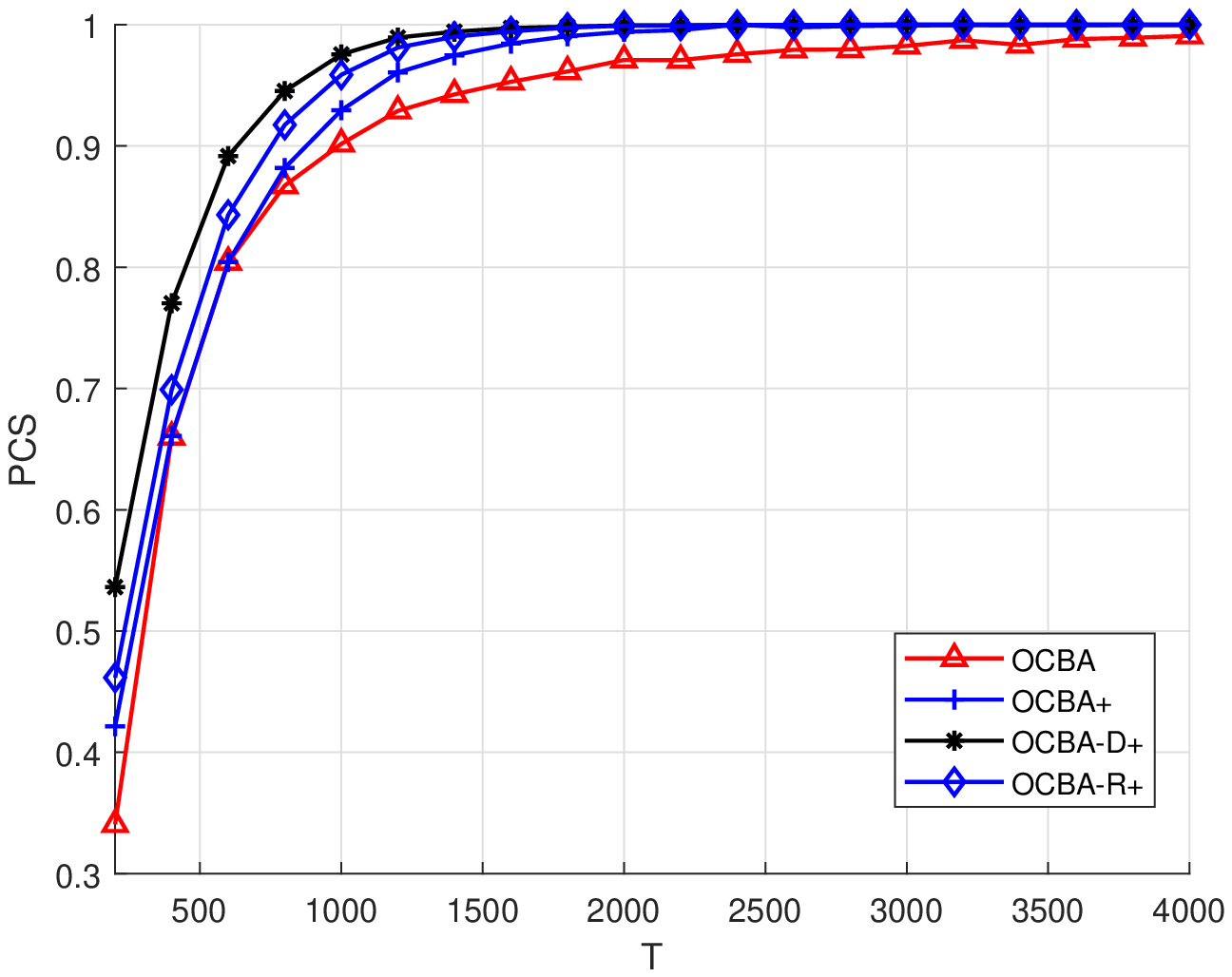}}} \\
    \subfloat[Slippage Configuration A.]{{\includegraphics[width=0.45\linewidth]{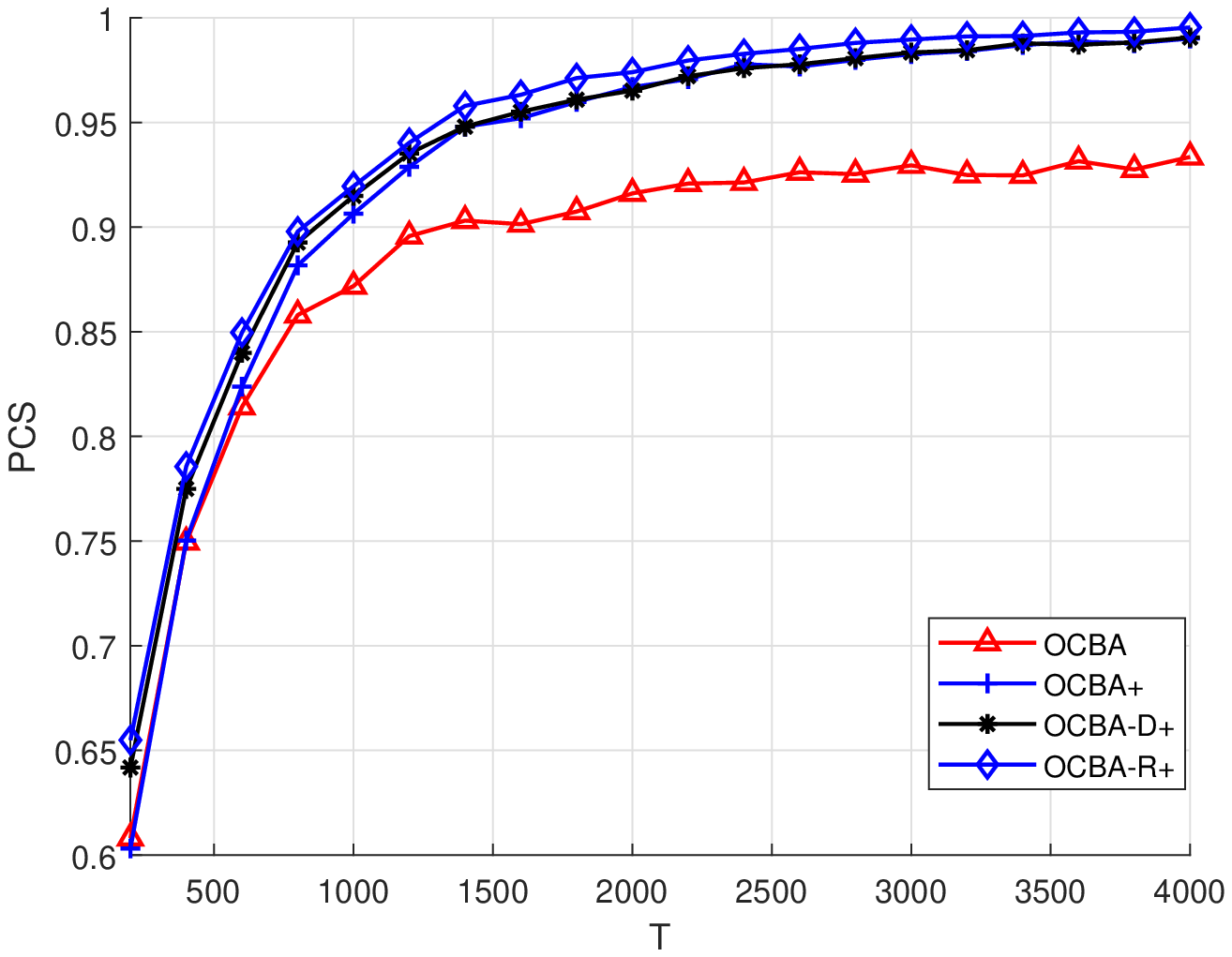} }}
    \quad
    \subfloat[Slippage Configuration B.]{{\includegraphics[width=0.45\linewidth]{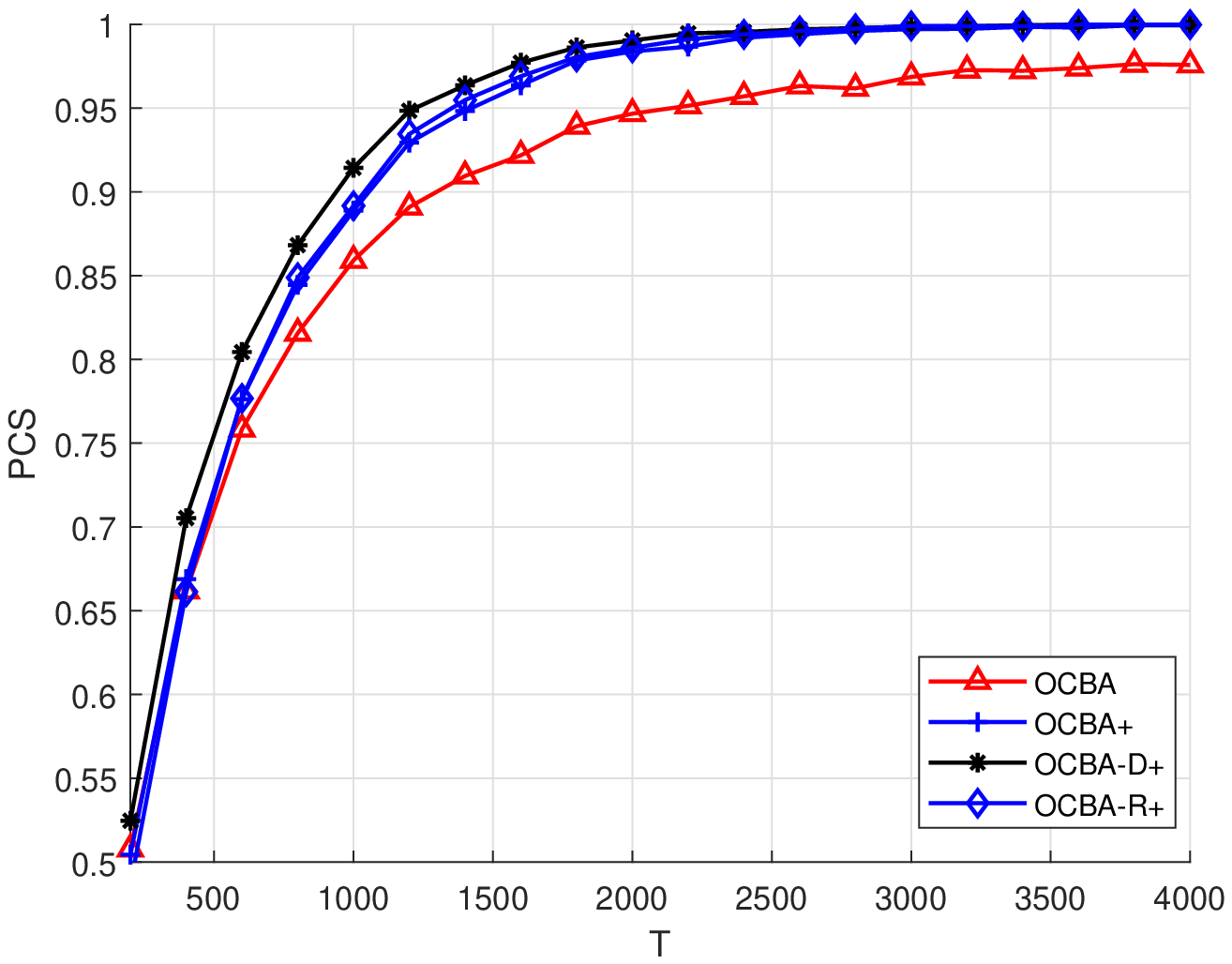} }} \\
     \subfloat[Equal variances.]{{\includegraphics[width=0.45\linewidth]{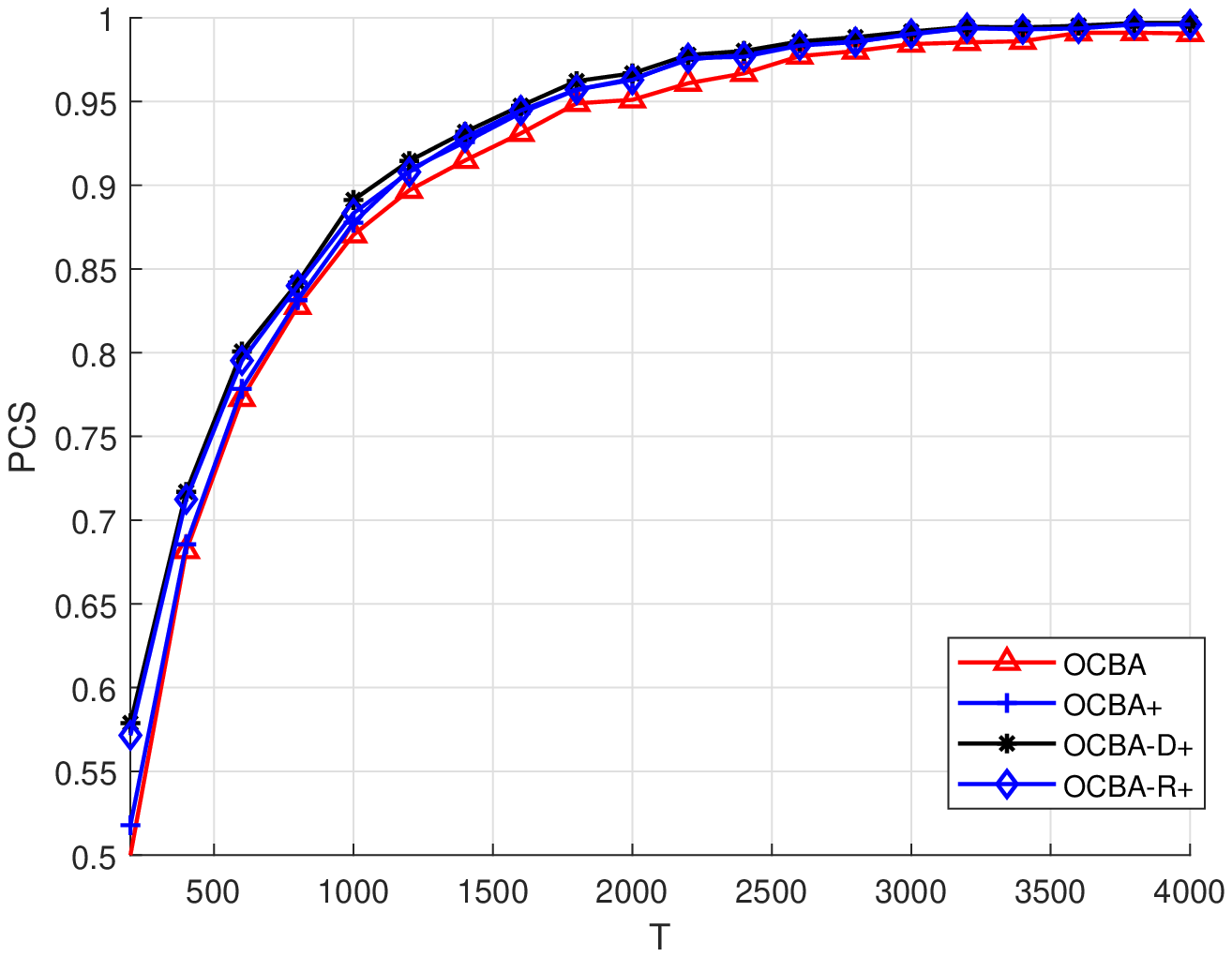} }}
    \quad
    \subfloat[Increasing variances.]{{\includegraphics[width=0.45\linewidth]{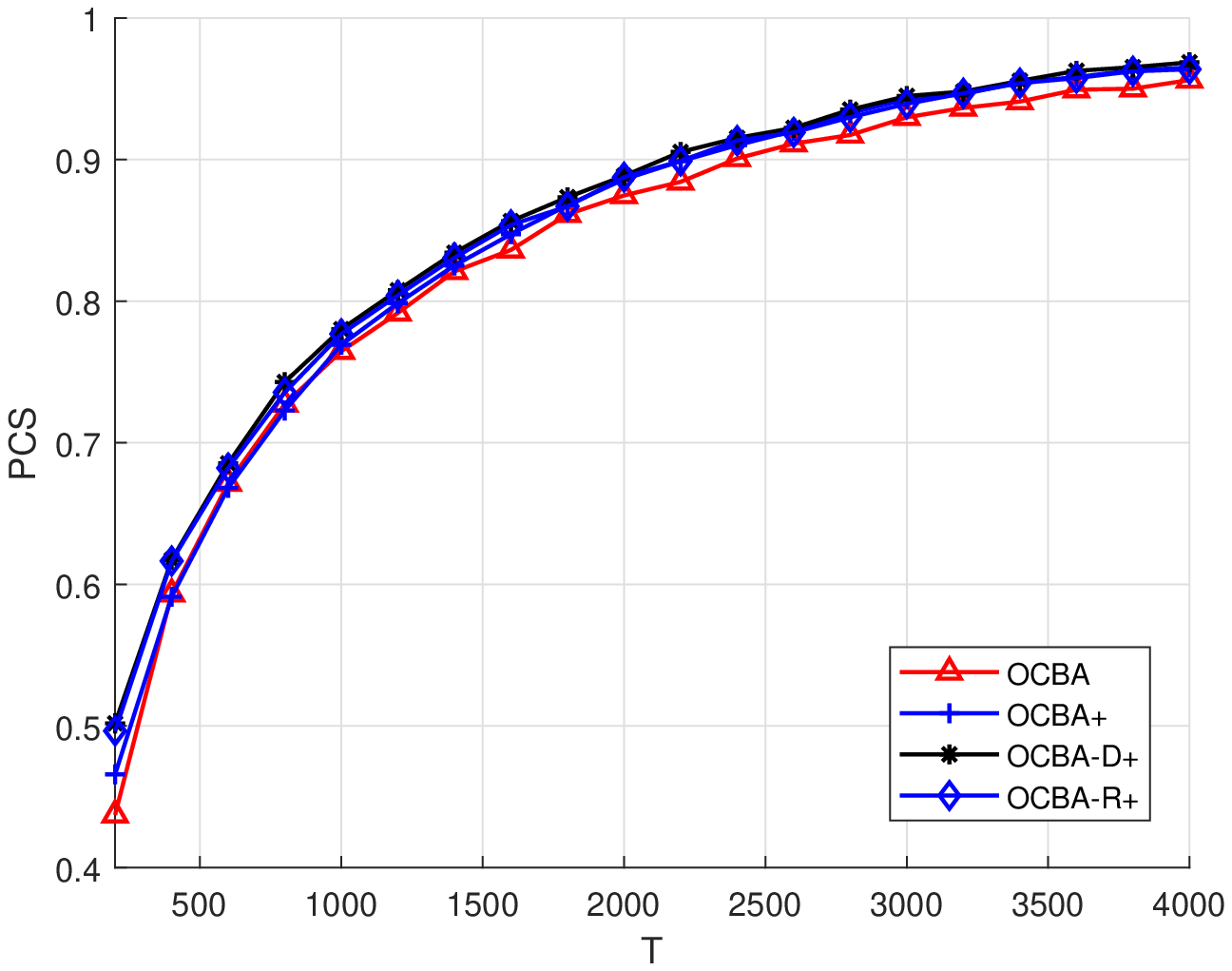} }} \\
    \caption{Comparison of PCS for different algorithms.}%
    \label{fig1}%
\end{figure}

\begin{enumerate}
\item
In Figure \ref{fig1} (a)-(f), the modified algorithms achieve a higher PCS than OCBA for every fixed $T$. This demonstrates the advantage of using a linearly growing $N_0$.
\item
In Figure \ref{fig1} (a) and (c), OCBA clearly suffers from slower convergence. Notably, on those two problem instances, OCBA takes approximately three times the budget of OCBA-R+ to attain a $95\%$ PCS, which echoes our finding that a constant $N_0$ only gives a polynomial rate. 
\item
The improvement is less visible in Figure \ref{fig1} (e) and (f), where the corresponding instances are ``easier'' as they have relatively spread-out means and smaller variances. Therefore, more benefit can be gained from using linearly growing $N_0$ on ``harder'' problem instances.
\item
Among the modified algorithms, OCBA-D+ and OCBA-R+ outperform OCBA+ on all problem instances, which can be expected from their fully sequential feature as it makes better use of the budget. However, no clear ranking is observed between OCBA-D+ and OCBA-R+.
\end{enumerate}

\section{Conclusion and Future Work} \label{sec6}
This paper studies the problem of fixed budget Ranking and Selection with independent normal samples.
By analyzing the performance of several OCBA-type algorithms, we discover that a constant initial sample size only leads to a sub-exponential convergence rate. A linearly growing initial sample size is then proposed to achieve both better theoretical (asymptotic) and practical (finite-sample) performance. In addition, we explicitly characterize the large deviations rate of some simplified algorithms, which sets a basis for more general convergence analysis.

With the study of fixed budget Ranking and Selection actively ongoing, we think that this work points to a least a number of directions that are worth pursuing.
\begin{enumerate}
\item
{\bf Tighter PFS bounds.} Our paper explores some techniques for analyzing the convergence rate of sequential allocation algorithms. However, more powerful approaches still need to be developed to derive tighter PFS bounds which capture the features of different algorithms.
\item
{\bf Explicit balance between exploration and exploitation.} The fixed budget Ranking and Selection problem is also called a ``pure exploration'' problem in the Multi-Armed Bandits literature, as the tradeoff between exploration and exploitation is relatively less explicit. It would be practically useful to design an algorithm that can explicitly balance these two aspects.
\item
{\bf Better performance measure.} As is mentioned at the end of Section \ref{sec2}, the three prevailing performance measures, i.e., finite-sample bounds, large deviations rate and numerical results, are all subject to some restrictions. This calls for a better performance measure, which should not only reflect the general performance of algorithms, but also allow tractable characterization.
\end{enumerate}

\section*{Acknowledgement} The authors gratefully acknowledge the support by the National Science Foundation under Grant CAREER CMMI-1453934.

\appendix
\section{Proofs for OCBA-type Algorithms.}
\proof{Proof of Proposition \ref{prop3.1}.}
Since (ii) is a direct consequence of (i) due to the consistency of mean and variance estimators, we will show (i) and (iii) for OCBA, OCBA-D, and OCBA-R.
\begin{enumerate}
\item {\bf Proof for OCBA.}
\begin{enumerate}
\item[(i)]
Suppose that all the random variables are defined on a probability space $(\Omega, \mathcal{F}, \mathbb{P})$. By the Strong Law of Large Numbers (SLLN), there exists a measurable set $\Omega_1 \subseteq \Omega$ such that $\mathbb{P}(\Omega_1) = 1$ and for all $\omega \in \Omega_1$ and any design $i \in \mathcal{I}$,
\begin{enumerate}
\item[(1)]
$\bar{X}_{i,n} \rightarrow \mu_i$ and $S_{i,n} \rightarrow \sigma_i$ as $n \rightarrow \infty$.
\item[(2)]
$S_{i,n} > 0$ for all $n \ge 2$.
\item[(3)]
$\bar{X}_{i, n} \neq \bar{X}_{j, m}$ for all $j \neq i$ and $n, m \ge 1$.
\end{enumerate}
Since the samples follow nondegenerate normal distributions, (2) and (3) both occur with probability 0, and $\Omega_1$ is guaranteed to exist. Here we mainly need (2) and (3) to avoid some trivial edge cases for OCBA. 

Take any sample path $\omega \in \Omega_1$. We will show that on $\omega$, $N_i(\ell) \rightarrow \infty$ as $\ell \rightarrow \infty$ for all designs. Assume for a contradiction that this does not hold, then there exists a nonempty set $\tilde{\mathcal{I}} \subseteq \mathcal{I}$ such that $N_j(\ell) \not\rightarrow \infty$ for all $j \in \tilde{\I}$. This means that for all $j \in \tilde{\I}$, $\bar{X}_j(\ell)$ and $S_j(\ell)$ will be fixed at some constants for all $\ell$ large enough. Under this assumption, we claim that the following holds. 
\begin{claim} \label{cm3.1}
There exists a constant $\tilde{\alpha} > 0$ such that for all designs $i \in \I$, $\hat{\alpha}_i(\ell) > \tilde{\alpha}$ for all $\ell$ large enough.
\end{claim}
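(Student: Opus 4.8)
The plan is to show that on the sample path $\omega \in \Omega_1$, and under the standing contradiction hypothesis, every statistic entering (\ref{eq3.1}) converges to a finite limit, so that the allocation fractions $\hat{\alpha}_i(\ell)$ eventually fall into a regime where they are uniformly bounded away from $0$. First I would record what the hypothesis buys us: for each frozen design $j \in \tilde{\I}$, since $N_j(\ell)$ stops increasing, both $\bar{X}_j(\ell)$ and $S_j(\ell)$ are eventually constant, equal to their final finite-sample values; for each $i \notin \tilde{\I}$, property (1) of $\Omega_1$ gives $\bar{X}_i(\ell) \to \mu_i$ and $S_i(\ell) \to \sigma_i$. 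Hence there exist finite limits $m_i := \lim_{\ell} \bar{X}_i(\ell)$ and strictly positive finite limits $s_i := \lim_{\ell} S_i(\ell)$ for every design $i$, where positivity of $s_i$ comes from property (2) for frozen designs and from $\sigma_i > 0$ for the rest.

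The first key point is distinctness of these limits and the resulting stabilization of the observed best. The limits $\{m_i\}$ are pairwise distinct: two frozen means differ by property (3), two active means differ by Assumption \ref{assump1}(i), and a frozen mean differs from an active mean $\mu_j$ almost surely (a probability-$0$ event for a continuous variable, which I absorb into the null exclusions defining $\Omega_1$). Consequently $b^* := \argmax_{i} m_i$ is unique, and since $\bar{X}_i(\ell) \to m_i$ the observed best $\hat{b}(\ell)$ equals $b^*$ for all $\ell$ large enough, so the branch of (\ref{eq3.1}) used by each design is frozen.

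Next I would obtain two-sided bounds on the unnormalized weights $\hat{\beta}_i(\ell)$. For all large $\ell$, every $\bar{X}_i(\ell)$ lies in a small neighborhood of $m_i$ and every $S_i(\ell)$ in a small neighborhood of $s_i$, the neighborhoods chosen small enough to be mutually separated and contained in $(0,\infty)$. For $i \neq b^*$, the numerator $S_i^2(\ell)$ lies in a compact subset of $(0,\infty)$ and the denominator $[\bar{X}_{b^*}(\ell) - \bar{X}_i(\ell)]^2$ lies in a compact subset of $(0,\infty)$ because $m_{b^*} \neq m_i$, so $\hat{\beta}_i(\ell)$ is bounded above and below by positive constants. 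Substituting these into the second branch of (\ref{eq3.1}) shows that $\hat{\beta}_{b^*}(\ell) = S_{b^*}(\ell)\sqrt{\sum_{i \neq b^*} \hat{\beta}_i^2(\ell)/S_i^2(\ell)}$ is likewise two-sidedly bounded, since it is assembled from finitely many two-sidedly bounded quantities (note this is a two-step, not a circular, definition: the $i \neq b^*$ weights are computed first).

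Finally, with $0 < c \le \hat{\beta}_i(\ell)$ for each $i$ and $\sum_{j \in \I} \hat{\beta}_j(\ell) \le M < \infty$ for all large $\ell$, I conclude that $\hat{\alpha}_i(\ell) = \hat{\beta}_i(\ell)/\sum_{j} \hat{\beta}_j(\ell) \ge c/M =: \tilde{\alpha} > 0$ for every design $i$, which is exactly Claim \ref{cm3.1}. I expect the only genuinely delicate step to be the stabilization of $\hat{b}$: the two-sided bounds are routine once the branch of (\ref{eq3.1}) is frozen, but justifying that freezing requires knowing that all limiting means are distinct, including those of the frozen designs, which are generic finite-sample averages rather than the true $\mu_i$. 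This is where the null-set exclusions in the construction of $\Omega_1$ are genuinely used, and possibly where they need to be slightly enlarged to rule out a frozen mean coinciding with an active design's true mean.
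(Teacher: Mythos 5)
Your proof follows essentially the same route as the paper's: under the contradiction hypothesis, all statistics converge to finite limits, the distinctness of the limiting means stabilizes $\hat{b}$ and hence freezes the branch of (\ref{eq3.1}), and then continuity (in your version, explicit two-sided bounds) gives positive limits for the $\hat{\beta}_i(\ell)$ and a uniform lower bound on $\hat{\alpha}_i(\ell)$. Your remark that $\Omega_1$ should additionally exclude a frozen sample mean coinciding with an active design's true mean $\mu_j$ is a legitimate refinement of a point the paper glosses over (its property (3) only rules out coincidences between two sample means, and its proof of the claim invokes only $\mu_i \neq \mu_j$), but since this is a probability-zero enlargement it does not alter the argument.
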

\begin{proof}{Proof of Claim \ref{cm3.1}}
It suffices to show that every $\hat{\alpha}_i(\ell)$ converges to some positive constant. Note that $\bar{X}_i(\ell) \rightarrow \mu_i$ and $S_i(\ell) \rightarrow \sigma_i$ as $\ell \rightarrow \infty$ for all $i \in \I \setminus \tilde{\I}$. Since $\mu_i \neq \mu_j$ for all $i \neq j$, we further know that for all $\ell$ large enough, $\hat{b}$ will be fixed and so does the form of $\hat{\beta}_i$ (note that $\hat{\beta}_{\hat{b}}$ has a different form than the other $\hat{\beta}_i$'s). It then follows from the continuity of $\hat{\beta}_i$ in $(\bar{X}_i, S_i)_{i \in \I}$ that $\hat{\beta}_i(\ell)$ will converge to some constant $\tilde{\beta}_i > 0$, and $\hat{\alpha}_i(\ell) \rightarrow \tilde{\beta}_i / (\sum_{i \in \I} \tilde{\beta}_i) > 0$.
\end{proof}
However, when $T'(\ell)$ gets sufficiently large, we would have $\lfloor \hat{\alpha}_j T'(\ell) \rfloor \ge \lfloor \tilde{\alpha} T'(\ell) \rfloor > N_j(\ell)$ for some design $j \in \tilde{\I}$, where step 6 of Algorithm \ref{OCBA} will allocate additional budget to $j$, hence a contradiction.
\item[(iii)]
Similarly, we will show convergence on $\Omega_1$. Let $\epsilon$ be an arbitrary positive number. From (ii), there exists $\ell_0$ such that $\hat{\alpha}_i(\ell) \in [\alpha_i - \epsilon, \alpha_i + \epsilon]$ for all $\ell  \ge \ell_0$. Furthermore, since $N_i(\ell) \rightarrow \infty$ for any design $i$, we can find $\ell_i \ge \ell_0$ such that $N_i(\ell_i+1) > N_i(\ell_i)$, i.e., $N_i$ jumps at the $\ell_i$th iteration. Let $\bar{\ell} := \max_{i \in \I} \ell_i$ so that every $N_i$ has jumped at least once since iteration $\bar{\ell}$. We claim that for all $\ell \ge \bar{\ell}$,
\begin{equation} \label{eq1}
\lfloor (\alpha_i - \epsilon) T'(\ell) \rfloor \le N_i(\ell) \le \lfloor (\alpha_i + \epsilon) T'(\ell) \rfloor, \quad \forall i \in \I.
\end{equation}
To see why (\ref{eq1}) holds, first notice from step 7 of Algorithm \ref{OCBA} that 
\begin{equation*}
N_i(\ell) = \max\left\{N_i(\ell-1), \lfloor \hat{\alpha}_i(\ell) T'(\ell) \rfloor \right\} \ge \lfloor \hat{\alpha}_i(\ell) T'(\ell) \rfloor \ge \lfloor (\alpha_i - \epsilon) T'(\ell) \rfloor,
\end{equation*}
so the first inequality in (\ref{eq1}) holds. For the other inequality, there are two cases to consider. If $N_i(\ell) = \lfloor \hat{\alpha}_i(\ell) T'(\ell) \rfloor$ then (\ref{eq1}) holds apparently. Otherwise, $N_i(\ell)  = \lfloor \hat{\alpha}_i(\ell') T'(\ell') \rfloor$, where 
\begin{equation*}
\ell' := \max\{\tilde{\ell} < \ell \mid  N_i(\tilde{\ell}+1) > N_i(\tilde{\ell})\},
\end{equation*}
which is the iteration corresponding to the most recent jump. We know $\ell_i$ corresponds to a jump, so $\ell_i \le \ell' < \ell$. Since $\ell_i \ge \ell_0$, the definition of $\ell_0$ ensures that $\hat{\alpha}_i(\ell') \le \alpha_i + \epsilon$. Thus,
\begin{equation*}
N_i(\ell) = \lfloor \hat{\alpha}_i(\ell') T'(\ell') \rfloor \le \lfloor (\alpha_i + \epsilon) T'(\ell) \rfloor,
\end{equation*}
so (\ref{eq1}) always holds. This would imply that 
\begin{equation*}
\frac{\alpha_i - \epsilon}{1 + K \epsilon} \le \liminf_{\ell \rightarrow \infty} \frac{N_i(\ell)}{\sum_{j \in \I} N_j(\ell)} \le \limsup_{\ell \rightarrow \infty} \frac{N_i(\ell)}{\sum_{j \in \I} N_j(\ell)} \le \frac{\alpha_i + \epsilon}{1 - K \epsilon}
\end{equation*}
for all $\epsilon$ sufficiently small. Send $\epsilon \rightarrow 0$ and the conclusion follows.
\end{enumerate}

\item{\bf Proof for OCBA-D.}
\quad
\begin{enumerate}
\item[(i)]
Using the $\Omega_1$ constructed previously, this can be shown by following a similar argument as in OCBA's proof (proof by contradiction). 
\item[(iii)]
Pick design 1 as reference design. It suffices to show that on $\Omega_1$, $N_i(\ell) / N_1(\ell) \rightarrow \alpha_i / \alpha_1$ as $\ell \rightarrow \infty$ for all $i \neq 1$. To begin with, for any $\epsilon >0$, there exists $\ell'$ such that for all $\ell \ge \ell'$, $\hat{\alpha}_i(\ell) \in [\alpha_i - \epsilon, \alpha_i + \epsilon], \forall i \in \I$. Furthermore, since $N_i(\ell)$ is nondecreasing in $\ell$ and $N_i(\ell) \rightarrow \infty$ as $\ell \rightarrow \infty$, we can find $\ell'' \ge \ell'$ such that all the designs satisfy $N_i(\ell) \ge N_i(\ell') + 2$ for all $\ell \ge \ell''$, i.e., all the $N_i$'s have jumped at least twice since the $\ell'$th iteration. Then, we claim that for any design $i \neq 1$,
\begin{equation} \label{eq2}
\frac{\alpha_i + \epsilon}{N_i(\ell) - 1} \ge \frac{\alpha_1 - \epsilon}{N_1(\ell)}, \quad \forall \ell \ge \ell''.
\end{equation}
Assume for a contradiction that (\ref{eq2}) does not hold. Let 
$$\ell_i := \max\{\tilde{\ell} \mid N_i(\tilde{\ell}) = N_i(\ell) - 1 \},$$
namely, the iteration when design $i$ is chosen to be simulated and $N_i$ is about to jump from $N_i(\ell)-1$ to $N_i(\ell)$. It then follows from the definition of $\ell''$ that $\ell' \le \ell_i \le \ell$, and from step 5 of Algorithm \ref{OCBA-D} we have
\begin{equation} \label{eq3}
\frac{\hat{\alpha}_i(\ell_i)}{N_i(\ell_i)} \ge \frac{\hat{\alpha}_j(\ell_i)}{N_j(\ell_i)}, \quad j \neq i.
\end{equation}
However, if (\ref{eq2}) does not hold, then we will have
\begin{equation*}
\frac{\hat{\alpha}_i(\ell_i)}{N_i(\ell_i)} \le \frac{\alpha_i + \epsilon}{N_i(\ell) - 1} < \frac{\alpha_1 - \epsilon}{N_1(\ell)} \le \frac{\hat{\alpha}_1(\ell_i)}{N_1(\ell_i)},
\end{equation*}
hence a contradiction to (\ref{eq3}). Thus, (\ref{eq2}) provides an upper bound on $N_i(\ell)$.
\begin{equation} \label{eq4}
N_i(\ell) \le \left(\frac{\alpha_i + \epsilon}{\alpha_1 - \epsilon}\right) N_i(\ell) + 1.
\end{equation}
By symmetry (using design $i$ as a reference design), we also have
\begin{equation*}
\frac{\alpha_1 + \epsilon}{N_1(\ell) - 1} \ge \frac{\alpha_i - \epsilon}{N_i(\ell)}, \quad \forall \ell \ge \ell'',
\end{equation*}
which gives the following lower bound on $N_i(\ell)$.
\begin{equation} \label{eq5}
N_i(\ell) \ge \left(\frac{\alpha_i - \epsilon}{\alpha_1 + \epsilon}\right) (N_1(\ell) - 1).
\end{equation}
Combining (\ref{eq4}) and (\ref{eq5}) and we have
\begin{equation*}
\frac{\alpha_i - \epsilon}{\alpha_1 + \epsilon} \le \liminf_{\ell \rightarrow \infty} \frac{N_i(\ell)}{N_1(\ell)} \le \limsup_{\ell \rightarrow \infty} \frac{N_i(\ell)}{N_1(\ell)} \le \frac{\alpha_i + \epsilon}{\alpha_1 - \epsilon}.
\end{equation*}
Take $\epsilon \rightarrow 0$ and we conclude that $N_i(\ell) / N_1(\ell) \rightarrow \alpha_i / \alpha_1$ as $\ell \rightarrow \infty$.
\end{enumerate}

\item{\bf Proof for OCBA-R.}
\quad
\begin{enumerate}
\item[(i)]
Let $\mathbbm{1}_{\{\cdot\}}$ denote an indicator function. Define an event
\begin{equation*}
\Omega_2 := \bigcap_{n \ge 1} \bigcap_{0 \le m \le n-1} \left\{\omega \in \Omega \bigg\vert \lim_{\ell \rightarrow \infty} \frac{1}{\ell}\sum_{\ell} \mathbbm{1}_{\{U(\ell) \in [\frac{m}{n}, \frac{m+1}{n}]\}} = \frac{1}{n} \right\},
\end{equation*}
which has probability 1 due to SLLN. We will show that $N_i(\ell) \rightarrow \infty$ as $\ell \rightarrow \infty$ on $\Omega_3 := \Omega_1 \cap \Omega_2$. Assume for a contradiction that this is not true. By an argument similar to OCBA's proof, $\hat{\alpha}_i(\ell) \rightarrow \tilde{\alpha}_i$ as $\ell \rightarrow \infty$ for some $\tilde{\alpha}_i > 0$. Then, any design $i$ will be simulated if and only if $U(\ell)$ falls into some nonempty interval $I_i(\ell)$. We can find $n_i,m_i,k_i \ge 0$ such that $[\frac{m_i}{n_i}, \frac{m_i+k_i}{n_i}] \subseteq I_i(\ell)$ for all $\ell$ large enough, and the definition of $\Omega_2$ ensures that
\begin{equation*}
\liminf_{\ell \rightarrow \infty} \frac{1}{\ell} \sum_{\ell} \mathbbm{1}_{\{U(\ell) \in I_i\}} \ge 
\lim_{\ell \rightarrow \infty} \frac{1}{\ell} \sum_{\ell} \mathbbm{1}_{\left\{U(\ell) \in \left[\frac{m_i}{n_i}, \frac{m_i+k_i}{n_i}\right] \right\}} = \frac{k_i}{n_i},
\end{equation*}
so all the designs will be simulated infinitely often, which is a contradiction. 

\item[(iii)]
It follows from (i) that for any arbitrary $\epsilon>0$, there exists $\ell'$ such that for all designs, $\hat{\alpha}_i(\ell) \in [\alpha_i-\epsilon, \alpha_i+\epsilon]$ for all $\ell \ge \ell'$. Meanwhile, at each iteration $\ell$, design $i$ is simulated if and only if $U(\ell) \in I_i(\ell):= [\sum_{j=0}^{i-1} \hat{\alpha}_j(\ell), \sum_{j=0}^{i} \hat{\alpha}_j(\ell)]$, where we let $\hat{\alpha}_0(\ell) := 0, \forall \ell$. Also define $I^{-\epsilon}_i := [\sum_{j=0}^{i-1} \alpha_j + (i-1)\epsilon, \sum_{j=0}^{i} \alpha_j - i \epsilon]$ and $I^{\epsilon}_i := [\sum_{j=0}^{i-1} \alpha_j - (i-1)\epsilon, \sum_{j=0}^{i} \alpha_j + i \epsilon]$. We may assume that $\epsilon$ is sufficiently small so that $I^{-\epsilon}_i$ and $I^\epsilon_i$ are both well-defined. It follows that $I^{-\epsilon}_i  \subseteq I_i(\ell) \subseteq I^{\epsilon}_i $ for all $\ell \ge \ell'$. Furthermore, there exists intervals $I'_i$ and $I''_i$ (independent of $\ell$) with rational endpoints such that $I'_i \subseteq I^{-\epsilon}_i \subseteq I^\epsilon_i \subseteq I''_i$, $| I^{-\epsilon}_i  \setminus  I'_i| \le \epsilon$ and $| I^{''}_i \setminus I^{\epsilon}_i| \le \epsilon$. Combining all these and by the definition of $\Omega_2$,
\begin{align*}
\alpha_i - 2i \epsilon &= \lim_{\ell\rightarrow \infty}\frac{1}{\ell} \sum_{\ell} \mathbbm{1}_{\{U(\ell) \in I'_i \}} \le \liminf_{\ell\rightarrow \infty}\frac{1}{\ell}\sum_{\ell} \mathbbm{1}_{\{U(\ell) \in I_i(\ell)\}} \\
&\le \limsup_{\ell\rightarrow \infty}\frac{1}{\ell}\sum_{\ell} \mathbbm{1}_{\{U(\ell) \in I_i(\ell)\}} \le \lim_{\ell\rightarrow \infty}\frac{1}{\ell} \sum_{\ell} \mathbbm{1}_{\{U(\ell) \in I''_i \}}  = \alpha_i + 2i \epsilon.
\end{align*}
Send $\epsilon \rightarrow 0$ and we have $\frac{1}{\ell}\sum_{\ell} \mathbbm{1}_{\{U(\ell) \in I_i(\ell)\}} \rightarrow \alpha_i$ as $\ell \rightarrow \infty$. The conclusion follows immediately from the fact that $N_i(\ell) = N_0 + \sum_{\ell \ge 0} \mathbbm{1}_{\{U(\ell) \in I_i(\ell)\}} $.  
\end{enumerate}

\end{enumerate}
\endproof

\proof{Proof of Lemma \ref{lemma1}.}
According to Lemma 1 in \cite{laurent2000adaptive}, if $X\sim \chi^2(n)$, then
\begin{align*}
\mathbb{P}(X - n \le -2\sqrt{nx}) &\le e^{-x}, \quad \forall x > 0,\\
\mathbb{P}(\chi^2(n) - n \ge 2 \sqrt{nx} + 2x) &\le e^{-x}, \quad \forall x >0.
\end{align*}

Since $(n-1)S_n / \sigma^2 \sim \chi^2(n-1)$, (\ref{eq:lemma1.1}) and (\ref{eq:lemma1.2}) can be derived by a change of variable.  
\endproof

\proof{Proof of Lemma \ref{lemma2}}
Fix $\epsilon$ as some arbitrary number in $(0, \sigma)$. From Lemma \ref{lemma1} we know that $\mathbb{P} \left\{S_n \le \sigma-\epsilon \right\} \le C_\epsilon e^{-\gamma_\epsilon n}, \forall \epsilon \in (0, \sigma)$, where $\gamma_\epsilon := \frac{1}{4} \left[1 - \left(\frac{\sigma-\epsilon}{\sigma} \right)^2 \right]^2$ and $C_\epsilon := e^{\gamma_\epsilon}$. Thus, $\exists L \ge 1$ such that
\begin{equation*}
\sum_{n \ge L} \mathbb{P}\left\{S_n \le \sigma - \epsilon \right\} \le \frac{C_\epsilon e^{-\gamma_\epsilon L}}{1 - e^{-\gamma_\epsilon}} \le \frac{c}{2}.
\end{equation*}
If $L=2$, then set $\bar{\epsilon} = \epsilon$ and (\ref{eq:lemma2.1}) holds. Otherwise, since $\mathbb{P} \left\{S_n \le \sigma - \epsilon \right\} \downarrow 0$ as $\epsilon \uparrow \sigma$, $\exists \epsilon' \in (0, \sigma)$ such that $\sum_{n=m}^{L-1} \mathbb{P} \left\{S_n \le \sigma - \epsilon' \right\} \le c/2$. Take $\bar{\epsilon} := \max\{\epsilon, \epsilon' \}$ and we have
\begin{align*}
\sum_{n \ge m} \mathbb{P} \{S_n \le \sigma - \bar{\epsilon} \} &\le \sum_{n=m}^{L-1} \mathbb{P}\{S_n \le \sigma - \bar{\epsilon}\} + \sum_{n \ge L} \mathbb{P} \{S_n \le \sigma - \bar{\epsilon}\} \\
& \le \frac{c}{2} + \sum_{n \ge L} \mathbb{P}\{S_n \le \sigma- \epsilon\} \le c,
\end{align*}
so that (\ref{eq:lemma2.1}) also holds.
 
\endproof

\proof{Proof of Lemma \ref{lemma3}}
Note that $(n-1)S_n^2 / \sigma^2 \sim \chi^2(n-1)$. Let $Z_1, Z_2, \ldots$ be i.i.d. $\mathcal{N}(0,1)$ random variables, and we have
\begin{align*}
\mathbb{P}\{S_n \le a\} &= \mathbb{P}\left\{\frac{(n-1)S_n^2}{\sigma^2} \le \frac{(n-1)a^2}{\sigma^2} \right\}= \mathbb{P} \left\{\sum_{i=1}^{n-1} |Z_i|^2 \le \frac{(n-1)a^2}{\sigma^2} \right\}\\
& \ge \mathbb{P} \left\{\bigcap_{i=1}^{n-1} \left\{|Z_i| \le \frac{a}{\sigma} \right\} \right\} = \left[\mathbb{P}\left\{|Z_1| \le \frac{a}{\sigma}\right\} \right]^{n-1},
\end{align*}
where we observe that
\begin{equation*}
\mathbb{P}\left\{|Z_1| \le \frac{a}{\sigma}\right\} \ge \frac{2a}{\sigma}\frac{1}{\sqrt{2\pi}}e^{-\frac{a^2}{2\sigma^2}}\ge \left(\frac{2e^{-\frac{b^2}{2\sigma^2}}}{\sigma \sqrt{2\pi}}\right) a := \mathcal{K}_b a,
\end{equation*}
by inspecting the shape of normal distribution's density, 
 
\endproof

\proof{Proof of Theorem \ref{thm3.1} (continued).}
It remains to show (\ref{eq:lb1}) for OCBA-D and (\ref{eq:lb2}) for OCBA-R. For OCBA-D, we use the same construction of events $E_i$ as in OCBA's proof. It suffices to show inductively that $\hat{b} = i^*=2$ for all iterations $\ell$. We know that this is true at $\ell = 0$. Assume that it holds for the $(\ell-1)$th iteration. Then, at the $\ell$th iteration, we have for all $i \neq 2$,
\begin{align}
\frac{\hat{\alpha}_{i}(\ell)}{\hat{\alpha}_2(\ell)} = &\frac{S_i^2(\ell) / \hat{\delta}_{2,i}^2(\ell)}{S_2(\ell) \sqrt{\sum_{j \neq 2} \frac{S_j^2(\ell)}{\hat{\delta}_{2,j}^4(\ell)}}}
\le \frac{S_i^2(\ell) / \hat{\delta}_{2,i}^2(\ell)}{S_2(\ell) \sqrt{\frac{S_i^2(\ell)}{\hat{\delta}_{2,i}^4(\ell)}}} \notag\\
=&\frac{S_i(\ell)}{S_2(\ell)} \le \frac{N_0 (\sigma_2 - \bar{\epsilon})}{S_2(\ell) T} \le \frac{N_0}{T}, \label{eq:lb1.1}
\end{align}
which implies that
\begin{equation*}
\frac{\hat{\alpha}_i(\ell)}{N_i(\ell)} = \frac{\hat{\alpha}_i(\ell)}{N_0} \le \frac{\hat{\alpha}_2(\ell)}{T} < \frac{\hat{\alpha}_2(\ell)}{N_2(\ell)}, \quad \forall i \neq 2.
\end{equation*}
Thus, $i^*=2$ and $\hat{b}=2$ at the $\ell$th iteration, and the process will eventually lead to a false selection. The rest follows from the same argument in OCBA's proof.

Next, we prove (\ref{eq:lb2}) for OCBA-R. Fix an arbitrary $\epsilon \in (0,1)$. The event $E_2$ uses the same construction as in OCBA's proof, i.e., 
\begin{equation*}
E_2:= \left\{\bar{X}_{2,n} \ge \mu_2 - \bar{\eta}, \forall n \ge N_0 \right\} \cap \left\{S_{2,n} \ge \sigma_2 - \bar{\epsilon}, \forall n \ge N_0 \right\},
\end{equation*}
where $\bar{\eta}>0$ and $\bar{\epsilon} \in (0, \sigma_2)$ are chosen such that $\mathbb{P}(E_2) \ge 1/2$. For $i \neq 2$, we let
\begin{equation*}
E_i := \left\{\bar{X}_{i, N_0} \le \mu_2 - \eta - 1 \right\} \cap \left\{S_{i, N_0} \le \epsilon(\sigma_2 - \bar{\epsilon})/(K-1) \right\}.
\end{equation*}
Let $\mathcal{E}:= \bigcap_{i=1}^K E_i$ and $L:=T-KN_0-1$. Note that if $\mathcal{E}$ occurs and the algorithm picks $i^*=2$ at all iterations, then a false selection always occurs. This provides a lower bound for the PFS,
\begin{align}
\text{PFS}(T) &\ge \mathbb{P}\left\{\left\{i^*=2, \forall \ell = 0, \ldots, L \right\} \cap \mathcal{E}\right\} \notag\\
&=\mathbb{P}\left\{\left\{i^*=\hat{b}=2, \forall \ell = 0, \ldots, L \right\} \cap \mathcal{E}\right\} \notag\\
&= \E\left[\prod_{\ell=0}^{L} \hat{\alpha}_2(\ell) \mathbbm{1}_{\mathcal{E}} \right], \label{eq:lb2.1}
\end{align}
where the last inequality follow from the design of Algorithm \ref{OCBA-R}, and
\begin{equation*}
\hat{\alpha}_2(\ell) = \frac{S_2(\ell) \sqrt{\sum_{j \neq 2} \frac{S_j^2(\ell)}{\hat{\delta}_{2,j}^4(\ell)}}}{\sum_{j \neq 2} \frac{S_j^2(\ell)}{\hat{\delta}_{2,j}^2(\ell)} + S_2(\ell) \sqrt{\sum_{j \neq 2} \frac{S_j^2(\ell)}{\hat{\delta}_{2,j}^4(\ell)}}}, \quad \forall \ell = 0, \ldots, L,
\end{equation*}
since $\hat{b}=2$ for all $\ell = 0, \ldots, L$. Furthermore, on event $\mathcal{E}$ we have
\begin{equation*}
\hat{\alpha}_i(\ell) \le \frac{\hat{\alpha}_i(\ell)}{\hat{\alpha}_2(\ell)} \le \frac{S_i(\ell)}{S_2(\ell)} = \frac{S_{i,N_0}}{S_2(\ell)} \le \frac{\epsilon(\sigma_2 - \bar{\epsilon})}{(K-1)(\sigma_2 - \bar{\epsilon})} \le \frac{\epsilon}{K-1},
\end{equation*}
where the second inequality follows from (\ref{eq:lb1.1}), the equality follows from $i^*=2$ for all $\ell = 0, \ldots, L$, and the third inequality is a consequence of $E_2$ and $E_i$. Thus, $\hat{\alpha}_2(\ell) = 1 - \sum_{i \neq 2} \hat{\alpha}_i(\ell) \ge 1 - \epsilon$ for $\ell = 0, \ldots, L$, and plugging it into (\ref{eq:lb2.1}) gives
\begin{equation*}
\text{PFS}(T) \ge \mathbb{P}(\mathcal{E}) (1-\epsilon)^{T-KN_0},
\end{equation*}
where $ \mathbb{P}(\mathcal{E}) = \prod_{i=1}^K \mathbb{P}(E_i) > 0$ is a constant independent of $T$. Therefore, 
$$-\lim_{T\rightarrow \infty} \frac{1}{T}\log \text{PFS}(T) \le - \log(1-\epsilon).$$ 
Take $\epsilon \downarrow 0$ and (\ref{eq:lb2}) follows.
 
\endproof

\section{Proofs of Characterizing the LD Rate.}
\proof{Proof of Lemma \ref{lemma4}.}
For any $2 \le k \le n$, $S_k^2$ is a function of the deviations $(\bar{X}_k - X_1, \ldots, \bar{X}_k - X_k)$. Thus, it suffices to show that
\begin{equation*}
\bar{X}_n \perp ((\bar{X}_2 - X_1, \bar{X}_2 - X_2), \ldots, (\bar{X}_n - X_1, \ldots, \bar{X}_n - X_n)),
\end{equation*}
where $\perp$ denotes independence, and we denote the RHS by $Y_n$. Note that $(\bar{X}_n, Y_n)$ is a linear transformation of $(X_1, \ldots, X_n)$ and hence are jointly normal, the result follows from
\begin{equation*}
\text{Cov}(\bar{X}_n, \bar{X}_k - X_j) = 0, \quad \forall 2 \le k \le n, j \le k,
\end{equation*}
which can be checked by direct computation.  
\endproof

\proof{Proof of Corollary \ref{cor1}.}
For $N_0 \le k \le T-N_0$, let 
\begin{equation*}
Y_{1,k}:=(S_{1,N_0}, \ldots, S_{1,k}), \quad Y_{2,k}:=(S_{2,N_0}, \ldots, S_{2,k}).
\end{equation*}
Note that $N_1 = k$ if and only if $(Y_{1,k}, Y_{2, T-k})$ falls into some event $A_k(T)$ that is measurable. Furthermore, following the proof of Lemma \ref{lemma4}, it can be shown that $(\bar{X}_{1,k}, \bar{X}_{2, T-k}) \perp (Y_{1,k}, Y_{2, T-k})$ since $(X_{1,1}, X_{1,2}, \ldots)\perp (X_{2,1}, X_{2,2}, \ldots)$. Thus, for any $N_0 \le k \le T-N_0$,
\begin{align*}
&\mathbb{P}\left(\bar{X}_{1, N_1} \le x,  \bar{X}_{2, N_2} \le y \mid N_1 = k\right)\\
=&\mathbb{P}\left(\bar{X}_{1, k} \le x,  \bar{X}_{2, T-k} \le y \mid (Y_{1,k}, Y_{2, T-k} \in A_k(T)) \right)\\
=&\mathbb{P}\left(\bar{X}_{1, k} \le x,  \bar{X}_{2, T-k} \le y\right).
\end{align*}
The conclusion follows from $\bar{X}_{1,k}\perp \bar{X}_{2,T-k}$.  
\endproof

\proof{Proof of Theorem \ref{thm:RS}.}
Recall the following Gaussian tail bound. For $X \sim \mathcal{N}(0,1)$ and $x > 0$,
\begin{equation} \label{eq:gaussian}
e^{-\frac{x^2}{2}} \ge \mathbb{P}(X >x) \ge \frac{x}{x^2+1}\frac{1}{\sqrt{2 \pi}}e^{-\frac{x^2}{2}}.
\end{equation}
Applying the upper bound in (\ref{eq:gaussian}) to the PFS expression in (\ref{eq:PFS-RS}) yields
\begin{equation} \label{eq:PFS-RS-ub}
\text{PFS}_{\text{RS}}(T) \le \sum_{k=0}^T \exp\left(-\frac{\delta^2}{2 \left(\frac{\sigma_1^2}{k+1} + \frac{\sigma_2^2}{T-k+1} \right)} \right) {n \choose k} p^k (1-p)^{T-k}.
\end{equation}
We will apply Lemma \ref{lemma:uniform} to (\ref{eq:PFS-RS-ub}) to show a lower bound for the RS algorithm's LD rate. The upper bound follows similarly from the lower bound in (\ref{eq:gaussian}) and is thus omitted. Letting $g_T(k)$ denote the summands in the RHS of (\ref{eq:PFS-RS-ub}), we have
\begin{equation} \label{eq:g_T}
\begin{aligned}
 \frac{1}{T} \log g_T(\lfloor \alpha T \rfloor) &= \left\{-\frac{\delta^2}{2\left(\frac{\sigma_1^2 T}{\lfloor \alpha T\rfloor + 1} + \frac{\sigma_2^2 T}{\lfloor (1-\alpha)T \rfloor} +1 \right)} + \frac{1}{T} \log\left(\frac{\Gamma(T+1)}{\Gamma(\lfloor \alpha T \rfloor+1)\Gamma(\lfloor(1-\alpha)T \rfloor+1)} \right)\right.\\
&\quad +\left.\frac{1}{T} \log \left(p^{\lfloor \alpha T \rfloor} \right) + \frac{1}{T} \log\left[(1-p)^{\lfloor (1-\alpha) T\rfloor} \right] \right\},
\end{aligned}
\end{equation}
where the first, third and fourth terms are clearly uniformly convergent in $\alpha$ as $T \rightarrow \infty$. For the second term, a well-known approximation of the log gamma function (see, e.g., \cite{gradshteyn2014table,kowalenko2014exactification}) gives
\begin{equation} \label{eq:gamma}
\log \Gamma(x) = \left(x -\frac{1}{2} \right) \log x - x + \frac{1}{2} \log(2\pi) + r(x),
\end{equation}
where there exists a constant $C>0$ such that the remainder $|r(x)| < C/x$ for all $x \ge 1$. It follows from (\ref{eq:gamma}) that the second term converges uniformly to $-\alpha \log\alpha - (1-\alpha) \log(1-\alpha)$. Sending $T \rightarrow \infty$ in (\ref{eq:g_T}) gives the objective function in the RHS of (\ref{eq:rate-RS}), and applying Lemma \ref{lemma:uniform} leads to the infimum problem.
 
\endproof

\proof{Proof of Lemma \ref{lemma:rate-two-phase}.}
Given two independent nonnegative random variables $X, Y$ with probability densities functions (p.d.f.s) $f_X$ and $f_Y$, it can be verified that for $t\in (0,1)$,
\begin{equation}\label{eq1.3}
\frac{d}{dt}\mathbb{P}\left(\frac{X}{X+Y} \le t\right) = \frac{1}{(1-t)^2} \int_0^\infty y f_X\left(\frac{t}{1-t} y \right)f_Y(y)dy.
\end{equation}
So we only need to compute the p.d.f.s for $S_1$ and $S_2$. Suppose that $N_0 = n$. Then, since $(n - 1) S_1^2 / \sigma_1^2\sim \chi^2(n - 1)$,
\begin{equation*}
\mathbb{P}(S_1 \le x) = \mathbb{P} \left(\chi^2(n - 1) \le \frac{(n - 1) x^2}{\sigma_1^2} \right),
\end{equation*}
and by differentiating on both sides,
\begin{equation*}
f_{S_1}(x) =\frac{2(n - 1) x}{\sigma_1^2 2^{\frac{n-1}{2}} \Gamma\left(\frac{n -1}{2} \right)} \left(\frac{(n-1)x^2}{\sigma_1^2} \right)^{\frac{n-3}{2}} \exp\left(-\frac{(n - 1)x^2}{2\sigma_1^2}  \right).
\end{equation*}
The p.d.f. of $S_2$ has a similar form and is omitted. Plugging their densities into (\ref{eq1.3}) and a direct computation yields the result.
\endproof

\proof{Proof of Theorem \ref{thm:two-phase}.}
The PFS for the two-phase algorithm is given by
\begin{equation} \label{eq:PFS-two-phase}
\text{PFS}(T) = \int_0^1 \mathbb{P}\left( \mathcal{N}\left( \delta, \frac{\sigma_1^2}{\lfloor (1-\alpha_0) pT \rfloor + 1} + \frac{\sigma_2^2}{\lfloor (1-\alpha_0)(1-p)T \rfloor + 1}\right)<0 \right) f_{N_0}(p) dp.
\end{equation}
Apply the Gaussian tail bounds in (\ref{eq:gaussian}) and we have
\begin{equation}\label{eq:PFS-ub-lb}
\int_0^1 \psi_T(p) f_{N_0}(p) dp \ge \text{PFS}(T) \ge K(T) \int_0^1 \psi_T(p) f_{N_0}(p) dp,
\end{equation}
where the function $\psi_T$ is defined as
\begin{equation*}
 \psi_T(p) := \exp\left\{-\frac{\delta^2}{2 \left(\frac{\sigma_1^2}{\lfloor (1-\alpha_0)pT \rfloor + 1} + \frac{\sigma_2^2}{\lfloor (1-\alpha_0)(1-p)T \rfloor + 1} \right)} \right\},
\end{equation*}
and $K(T)$ is of order $\sqrt{T}$ as $T \rightarrow \infty$. From (\ref{eq:PFS-ub-lb}) we know that the PFS has the same LD rate as $\int_0^1 \psi_T(p) f_{N_0}(p) dp$, so it suffices to study the latter. Similar to the proof of Theorem \ref{thm:RS}, if $g_T(p) :=\frac{1}{T} \log\left( \psi_T(p) f_{N_0}(p) \right)$ converges uniformly to a function $g^*$ on $p \in [0,1]$ as $T \rightarrow \infty$, then Theorem 5.3 in \cite{shapiro2014lectures} guarantees that
\begin{equation} \label{eq:interchange}
\lim_{T \rightarrow \infty} \max_{p \in [0,1]}g_T(p) = \max_{p \in [0,1]} g^*(p).
\end{equation}
To check uniform convergence, we have
\begin{align*}
g_T(p) &= \underbrace{-\frac{\delta^2}{2T \left(\frac{\sigma_1^2}{\lfloor (1-\alpha_0)pT \rfloor + 1} + \frac{\sigma_2^2}{\lfloor (1-\alpha_0)(1-p)T \rfloor + 1} \right)}}_{\text{$(1)$}} + \underbrace{\frac{N_0 - 2}{T} \log(p(1-p))}_{\text{$(2)$}}\\
&\underbrace{+\frac{N_0 - 1}{T} \log\left(\sigma_1\sigma_2 / [(1-p)^2\sigma_1^2 +p^2\sigma_2^2] \right)}_{\text{$(3)$}} + \underbrace{\frac{1}{T} \log\left(\frac{2\Gamma(N_0-1)}{\left[\Gamma\left(\frac{N_0-1}{2} \right) \right]^2} \right)}_{\text{$(4)$}},
\end{align*}
where $N_0 = \lfloor \alpha_0 T/2 \rfloor$ and $-\lim_{T\rightarrow \infty} g_T(p)$ gives the objective function in the RHS of (\ref{eq:rate-two-phase}). The uniform convergence of (1), (3) and (4) can be easily checked. However, (2) is not uniformly convergent near the two endpoints 0 and 1. To show (\ref{eq:interchange}), note that (1), (3) and (4) are all uniformly bounded in $p$ and $T$, while $\log(p(1-p)) \rightarrow -\infty$ as $p\rightarrow 0$ or $1$. Thus, for any $M>0$ and $\epsilon>0$, there exists corresponding $0<p_1<p_2<1$ such that for all $T$ large enough,
\begin{equation}\label{eq:g_T-uniform}
g_T(p)  < M-\epsilon, \quad \forall p \in [0, p_1) \cup (p_2, 1].
\end{equation}
Fix a $\tilde{p} \in (0,1)$ and take $M= g^*(\tilde{p})$. Then, $g_T(\tilde{p}) \ge M-\epsilon$ for all $T$ large enough, which together with (\ref{eq:g_T-uniform}) implies that the maximizer of $g_T$ can only lie in $[p_1, p_2]$ for all $T$ large. Let $p^* \in \argmax_{p\in[0,1]}g^*(p)$, which is guaranteed to exist since $g^*$ is continuous on $(0,1)$ and $g^*(p) \rightarrow -\infty$ as $p \rightarrow 0$ or $1$ (see, e.g., Proposition A.8 in \cite{bertsekas1999nonlinear} for Weierstrass' Extreme Value Theorem). Further expand the interval $[p_1, p_2]$ if necessary such that $p^* \in[p_1, p_2]$, which does not affect the preceding argument. Since $g_T$ converges uniformly on $[p_1, p_2]$,  we have
\begin{align*}
\lim_{T\rightarrow \infty}\frac{1}{T} \log \int_0^1 \left( \psi_T(p) f_{N_0}(p) \right)dp & \le \lim_{T \rightarrow \infty} \frac{1}{T} \log \left( \max_{p \in [p_1,p_2]} \psi_T(p) f_{N_0}(p) \right)\\
&\le \lim_{T \rightarrow\infty} \max_{p\in[p_1, p_2]} g_T(p) \\
&= \max_{p\in[p_1, p_2]} g^*(p) = \max_{p \in [0,1]} g^*(p),
\end{align*}
where the last equality follows from $p^* \in [p_1, p_2]$. For a lower bound, choose an $\epsilon > 0$ such that $[p^* - \epsilon, p^* + \epsilon] \subseteq [0,1]$,  and we have
\begin{align*}
\lim_{T\rightarrow \infty}\frac{1}{T} \log \int_0^1 \left( \psi_T(p) f_{N_0}(p) \right)dp &\ge \lim_{T\rightarrow \infty}\frac{1}{T} \log \int_{p^* - \epsilon}^{p^* + \epsilon} \left( \psi_T(p) f_{N_0}(p) \right)dp\\
& \ge \lim_{T \rightarrow \infty}\min_{p \in [p^* - \epsilon, p^* + \epsilon]} \frac{1}{T} \log \left(2 \epsilon \psi_T(p) f_{N_0}(p) \right) \\
&= \min_{p \in [p^* - \epsilon, p^* + \epsilon]} g^*(p),
\end{align*}
where the last equality is due to uniform convergence. Take $\epsilon \downarrow 0$ and the lower bound follows from the continuity of $g^*$ on $(0,1)$.  
\endproof

\bibliographystyle{ieeetr}
\bibliography{improveOCBA}

\end{document}